\documentclass[a4paper,12pt]{article}
\usepackage[french, english]{babel}
\usepackage[utf8]{inputenc}
\usepackage[T1]{fontenc}
\usepackage{lmodern}
\usepackage[backend=biber,doi=false,url=false,isbn=false,maxbibnames=99, 
giveninits=true]{biblatex}
\usepackage{csquotes}
\usepackage{geometry}
\usepackage{amsmath,amssymb,amsthm}
\usepackage{tikz}
\usetikzlibrary{calc, positioning, intersections, arrows.meta}
\usepackage[titletoc,title]{appendix}
\usepackage{dsfont}
\usepackage{microtype}
\usepackage{hyperref}

\addbibresource{bib.bib}

\definecolor{sienne}{RGB}{136, 45, 23}
\hypersetup{
colorlinks = true,
linkcolor = sienne
}

\newtheorem{theorem}{Theorem}
\newtheorem{prop}[theorem]{Proposition}

\newtheorem{lemma}[theorem]{Lemma}

\theoremstyle{definition}
\newtheorem{definition}[theorem]{Definition}

\theoremstyle{remark}
\newtheorem{remark}[theorem]{Remark}
\newtheorem{ex}[theorem]{Example}

%
 
\DeclareMathOperator{\distance}{distance}

\newcommand{\diff}[1][-3]{\ensuremath{\mathop{}\mkern#1mu{d}}}
\newcommand{\set}{\mathbb}
\newcommand{\cross}[1]{+(-#1,0) -- +(#1,0) +(0,-#1) -- +(0,#1)}
\allowdisplaybreaks[1]
\title{Non null controllability of the Grushin operator in 2D}
\author{Armand 
KOENIG\thanks{\textit{email:} \texttt{armand.koenig@unice.fr} \textit{address:}
Laboratoire de Mathématiques J.A. Dieudonné
UMR \no 7351 CNRS UNS
Université de Nice - Sophia Antipolis
06108 Nice Cedex 02
France}\\\small Université Côte d'Azur, CNRS, LJAD, France}

\newcommand\blfootnote[1]{%
  \begingroup
  \renewcommand\thefootnote{}\footnote{#1}%
  \addtocounter{footnote}{-1}%
  \endgroup
}

\begin{document}
\maketitle
\begin{abstract}
We are interested in the exact null controllability of the equation 
$\partial_t f - \partial_x^2 f - x^2 \partial_y^2f = \mathds 1_\omega u$, with 
control $u$ supported on $\omega$. We show that, when $\omega$ does not 
intersect a horizontal band, the considered equation is never 
null-controllable. The main idea is to interpret the associated observability 
inequality as an $L^2$ estimate on polynomials, which Runge's theorem 
disproves. To that end, we study in particular the first eigenvalue of the 
operator $-\partial_x^2 + (nx)^2$ with Dirichlet conditions on $(-1,1)$ and we show a quite precise 
estimation it satisfies, even when $n$ is in some complex domain.
\end{abstract}

\blfootnote{
\copyright 2017 This manuscript version is made 
available under the \hbox{\textsc{CC-BY-NC-ND 4.0}} license 
\url{http://creativecommons.org/licenses/by-nc-nd/4.0/}. \textit{Published paper} 
\textsc{doi}: \href{https://doi.org/10.1016/j.crma.2017.10.021}{\tt 
10.1016/\allowbreak  j.crma.2017.10.021}.
\emph{AMS 2010 subject classification:} 93B05, 93B07, 93C20, 35K65, 30A10, 
34L15.
\emph{Keywords:} null controllability, observability, degenerate parabolic equations. 
This work was partially supported by the ERC advanced grant 
SCAPDE, seventh framework program, agreement \no~320845.}

\setcounter{tocdepth}{1}
\tableofcontents
\section{Introduction}
\subsection{The problem of controllability of the Grushin equation}
We are interested in the following equation, where $\set T = \set R\slash 
2\pi\set Z$, $\Omega = (-1,1)\times \set T$ and $\omega$ is an open subset of 
$\Omega$:
\begin{equation*}
\begin{aligned}
(\partial_t - \partial_x^2 - x^2\partial_y^2)f(t,x,y)  &= \mathds 1_\omega u(t,x,y)
&&\  t\in [0,T], (x,y)\in \Omega\\
f(t,x,y) &= 0 &&\  t \in [0,T], (x,y)\in \partial\Omega.
\end{aligned}
\end{equation*}
It is a control problem with state $f$ and control $u$ supported on $\omega$. More precisely, we are interested in the exact null controllability of this equation:
\begin{definition}\label{def:control}
We say that the Grushin equation is null-controllable on $\omega$ in time $T>0$ if for all $f_0$ in $L^2(\Omega)$, there exists $u$ in $L^2([0,T]\times \omega)$ such that the solution $f$ of:
\begin{equation}\label{eq:control_problem_eng}
\begin{aligned}
(\partial_t - \partial_x^2 - x^2\partial_y^2)f(t,x,y)  &= \mathds 1_\omega u(t,x,y) 
&&\  t\in [0,T], (x,y)\in \Omega\\
f(t,x,y) &= 0 &&\  t \in [0,T], (x,y)\in \partial\Omega\\
f(0,x,y) &= f_0(x,y)&&\ (x,y)\in \Omega
\end{aligned}
\end{equation}
satisfies $f(T,x,y) = 0$ for all $(x,y)$ in $\Omega$.
\end{definition}

We show in this paper that if $\omega$ does not intersect a horizontal band, then the answer is negative whatever $T$ is:
\begin{theorem}\label{th:main}
  Let $[a,b]$ be a non trivial segment of $\set T$ and $\omega = (-1,1)\times 
(\set T\setminus [a,b])$. Let $T>0$. The Grushin equation is not 
null-controllable in $\omega$ on time $T$.
\end{theorem}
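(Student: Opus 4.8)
The plan is to contradict the observability inequality that is dual, via the Hilbert Uniqueness Method, to null controllability. First I would recall that the Grushin equation is null-controllable on $\omega$ in time $T$ if and only if there is $C>0$ such that every solution $g$ of the free equation $\partial_t g=\partial_x^2 g+x^2\partial_y^2 g$ on $[0,T]\times\Omega$ with Dirichlet boundary conditions satisfies
\[
\|g(T,\cdot)\|_{L^2(\Omega)}^2\;\le\;C\int_0^T\!\!\int_\omega|g(t,x,y)|^2\,\mathrm{d}x\,\mathrm{d}y\,\mathrm{d}t ;
\]
so it suffices to produce solutions violating this. Expanding in Fourier series in $y$, the mode $\phi(x)e^{iny}$ evolves into $\big(e^{-tA_n}\phi\big)(x)\,e^{iny}$, where $A_n=-\partial_x^2+(nx)^2$ on $(-1,1)$ with Dirichlet conditions; write $\lambda_n>0$ for its first eigenvalue and $\phi_n$ for the associated positive, $L^2$-normalised eigenfunction, so that $g(t,x,y)=e^{-\lambda_n t}\phi_n(x)e^{iny}$ is a solution.

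To a polynomial $P(\zeta)=\sum_{n\ge0}a_n\zeta^n$ I attach the solution $g_P(t,x,y)=\sum_{n\ge0}a_n e^{-\lambda_n t}\phi_n(x)e^{iny}$, a finite sum, hence a bona fide smooth solution. Orthogonality of the $e^{iny}$ in $L^2(\mathbb{T})$ gives the exact identity $\|g_P(T,\cdot)\|_{L^2(\Omega)}^2=2\pi\sum_n|a_n|^2 e^{-2\lambda_n T}$. Now I would invoke the precise spectral information announced in the abstract: $\lambda_n$ is extremely close to $n$ and $\phi_n(x)$ is extremely close to the renormalised Gaussian $(n/\pi)^{1/4}e^{-nx^2/2}$, the errors being small and, crucially, controllable even when the parameter $n$ ranges over a complex neighbourhood of the positive reals. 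Modulo these errors, $g_P(t,x,y)\approx\pi^{-1/4}Q\big(e^{-(t+x^2/2)}e^{iy}\big)$ with $Q(\zeta)=\sum_n a_n n^{1/4}\zeta^n$; and as $(t,x,y)$ runs over $[0,T]\times\omega$, the point $e^{-(t+x^2/2)}e^{iy}$ runs over the compact annular sector
\[
K=\big\{\rho e^{i\theta}\;:\;e^{-T-1/2}\le\rho\le1,\ \theta\in\mathbb{T}\setminus(a,b)\big\}.
\]
Thus, up to the errors, the observability inequality becomes an estimate of the form $\|Q\|_{L^2(A)}^2\le C'\,\|Q\|_{L^2(K)}^2$ for all polynomials $Q$, where $A$ is a small compact annular sector with radii near $e^{-T}$ and angles strictly inside $(a,b)$ — that is, $A$ lies in the "missing wedge" of $K$, reflecting that $g_P(T,\cdot)$ is large because of the $x\approx0$, $y\in(a,b)$ part of $\Omega$, which $\omega$ does not see.

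Runge's theorem then destroys this estimate. Since $[a,b]$ is a non-trivial segment, the open wedge of arguments $(a,b)$ is absent from $K$ and joins the central hole $\{\rho<e^{-T-1/2}\}$ to the exterior $\{\rho>1\}$, so $\mathbb{C}\setminus K$ is connected; moreover $A$ is disjoint from $K$, at positive distance from it, and $\mathbb{C}\setminus(K\cup A)$ is still connected. Hence there are polynomials $Q_j$ with $\sup_K|Q_j|\to0$ and $Q_j\to1$ uniformly on $A$. For the corresponding solutions $g_{P_j}$, the right-hand side of the observability inequality is controlled by $T\,|\Omega|\,\sup_K|Q_j|^2$ plus a small error and tends to $0$, while the left-hand side dominates the $L^2(A)$-mass of $Q_j$ minus a small error and stays bounded below — a contradiction. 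Therefore no such $C$ exists and the Grushin equation is not null-controllable on $\omega$ in time $T$.

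The genuine difficulty is the second paragraph: converting the PDE inequality into an honest $L^2$ estimate on polynomials. The Runge polynomials carry no a priori bound on their coefficients, so the difference between $g_{P_j}$ and $\pi^{-1/4}Q_j\big(e^{-(t+x^2/2)}e^{iy}\big)$ cannot be discarded term by term; one is led to represent $g_P$ by a Cauchy-type integral in the coefficient variable $\zeta$ and to deform the contour, after which the error can be bounded by a quantity that is genuinely small and independent of $P$ only because the spectral corrections — the smallness of $\lambda_n-n$ and of the non-Gaussian part of $\phi_n$ — are uniform on a complex neighbourhood of $[n_0,\infty)$. Establishing that quantitative, complex-analytic description of the first eigenpair of $-\partial_x^2+(nx)^2$ on $(-1,1)$ with Dirichlet conditions is thus the real work; granting it, the reduction to polynomials and the Runge obstruction proceed as above. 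I would also dispatch the finitely many low frequencies and the mode $n=0$ separately, and be mildly careful near $\{t=0,\ x=0\}$, where the parametrising map reaches the unit circle.
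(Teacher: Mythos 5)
Your outline follows the paper's strategy closely: pass to the observability inequality, test it on finite linear combinations of $e^{-\lambda_n t}v_n(x)e^{iny}$, change variables through $z=e^{-t+iy}$ to turn the observation integral into an estimate on polynomials over an annular sector with a missing wedge, and destroy that estimate with Runge's theorem; you also correctly identify that the entire difficulty lies in the complex-analytic description of the first eigenpair of $-\partial_x^2+(nx)^2$ on $(-1,1)$, which the paper develops in Sections 3 and 4 via the holomorphy default operators $\sum a_nz^n\mapsto\sum\gamma(n)a_nz^n$ and a steepest-descent/Newton analysis of $\lambda_\alpha$ for complex $\alpha$.

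One conceptual point is worth correcting, though. You write that after representing $g_P$ as a Cauchy-type integral and deforming the contour, ``the error can be bounded by a quantity that is genuinely small and independent of $P$.'' That cannot be right as stated: the discrepancy between $g_P$ and the idealized $\pi^{-1/4}Q_P$ is linear in $P$, so it scales with $P$ and cannot possibly be bounded by an absolute constant. What the contour deformation actually yields (Theorem \ref{th:est_default} and Lemma \ref{th:lemma1} in the paper) is a \emph{multiplicative} estimate with a \emph{loss of domain}: the observation term is bounded by a fixed (not small) constant times $\|Q_P\|_{L^\infty(U^\delta)}^2$, where $U^\delta$ is a $\delta$-enlargement of the sector $U$. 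The Runge step must therefore be carried out against $U^\delta$ rather than $U$; this is harmless so long as you shrink $\delta$ until $U^\delta$ stays clear of the disk (or of your set $A$), which is exactly what Figure \ref{fig:mathcal_D} in the paper arranges. Your choice of Runge data ($Q_j\to0$ on $K$, $Q_j\to1$ on a wedge sector $A\subset D(0,e^{-T})$) is a legitimate variant of the paper's (which instead blows up $\|f_j\|_{L^2(D(0,e^{-T}))}$ by approximating $z^{N+1}(z-z_0)^{-1}$ with $z_0$ in the missing wedge), and both exploit the same connectedness of $\widehat{\mathbb C}\setminus(U^\delta\cup A)$. With the error statement rephrased as a norm estimate with domain loss rather than an additive small quantity, your sketch is essentially the paper's proof.
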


That is to say, there exists some $f_0\in L^2(\Omega)$ that no $u\in 
L^2([0,T]\times \omega)$ can steer to $0$ in time $T$. This can be strengthened 
by saying that even if we ask the initial condition to be very 
regular, it may be impossible to steer it to $0$ in finite time. We will state 
this in a precise way in Proposition \ref{th:main_aug}.

We stated Theorem \ref{th:main} with $\Omega = (-1,1)\times \set T$ as it is 
(very) slightly easier than $\Omega = (-1,1)\times (0,1)$, but the situation is the 
same for both cases, and we briefly explain in Appendix \ref{sec:grushin_rectangle} 
what to do for the later case.

The proof we provide for this theorem is very specific to the potential $x^2$: if 
we replace $x^2$ in equation \eqref{eq:control_problem_eng} by, say, 
$x^2 + \epsilon x^3$, we cannot prove with our method the non-null controllability. 
However, there is only a single, but crucial argument that prevents us from doing 
so. We will discuss this a little further after Theorem \ref{th:asymptotic}.

\subsection{Bibliographical comments}
This equation has previously been studied on $(-1,1)\times(0,1)$, and some 
results already exist for different controllability sets. Controllability holds 
for large time but not in small time if $\omega = (a,b)\times(0,1)$ with 
$0<a<b$, as shown by Beauchard, Cannarsa and Guglielmi 
\cite{beauchard_null_2014-1}, and holds in any time if $\omega = 
(0,a)\times(-1,1)$ with $0<a$, as shown by Beauchard, Miller and Morancey 
\cite{beauchard_2d_2015}.

The controllability of the Grushin equation is part of the larger field of the 
controllability of degenerate parabolic partial differential equations of 
hypoelliptic type. For the non-degenerate case, controllability is known since 
1995 to hold when $\Omega$ is any bounded $C^2$ domain, in any open control 
domain and in arbitrarily small time \cite{lebeau_controexact_1995, 
fursikov_controllability_1996}. For parabolic equations degenerating on the 
boundary, the situation is well understood in dimension one 
\cite{cannarsa_carleman_2008} and in dimension two \cite{cannarsa_global_2016}. For 
parabolic equations degenerating inside the domain, in addition to the already 
mentioned two articles on the Grushin equation, we mention articles on 
Kolmogorov-type equations \cite{beauchard_null_2014,beauchard_degenerate_2015}, the 
heat equation on the Heisenberg group \cite{beauchard_heat_2017} and quadratic 
hypoelliptic equations on the whole space \cite{beauchard_null-controllability_2016, 
beauchard_null-controllability_2016-1}.

\subsection{Outline of the proof, structure of the article}\label{sec:outline}
As usual in controllability problems, we focus on the following observability 
inequality on the adjoint system, which is equivalent by a duality argument to the 
null-controllability (definition \ref{def:control}, see \cite[][Theorem 
2.44]{coron_control_2007} for a proof of this equivalence): there exists $C>0$ 
such that for all $f_0$ in $L^2(\Omega)$, the solution $f$ of:
\begin{equation}\label{eq:grushin}
\begin{aligned}
(\partial_t - \partial_x^2 - x^2\partial_y^2)f(t,x,y)  &= 0 
&&\  t\in [0,T], (x,y)\in \Omega\\
f(t,x,y) &= 0 &&\  t \in [0,T], (x,y)\in \partial\Omega\\
f(0,x,y) &= f_0(x,y) &&(x,y)\in \Omega
\end{aligned}
\end{equation}
satisfies:
\begin{equation}\label{eq:observability}
 \int_\Omega |f(T,x,y)|^2\diff x\diff y  \leq C \int_{[0,T]\times \omega} 
|f(t,x,y)|^2\diff t \diff x \diff y.
\end{equation}

Therefore, the Theorem \ref{th:main} can be stated the following way :
\begin{theorem}\label{th:main_obs}
 There exists a sequence $(f_{k,0})$ in $(L^2(\Omega))^{\set N}$ such that for 
every $k\in \set N$, the solution $f_k$ of the Grushin equation 
\eqref{eq:grushin} with initial condition $f_{k,0}$ satisfies
$\sup_k|f_k|_{L^2([0,T]\times \omega)} <+\infty$ and $|f_k(T,\cdot, 
\cdot)|_{L^2(\Omega)}\to +\infty$ as $k\to +\infty$. 
\end{theorem}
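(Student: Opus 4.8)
The plan is to Fourier-decompose in the periodic variable $y$ and, on carefully chosen initial data, to recognise the observability inequality \eqref{eq:observability} as a weighted $L^2$ estimate for polynomials that Runge's theorem contradicts. For $n\ge 1$ let $\lambda_n>0$ and $\varphi_n$ be the first eigenvalue and the ($L^2(-1,1)$-normalised) first eigenfunction of the Dirichlet realisation of $-\partial_x^2+n^2x^2$ on $(-1,1)$. If $f_0=\sum_{n\ge1}a_n e^{iny}\varphi_n(x)$ then the solution of \eqref{eq:grushin} is $f(t,x,y)=\sum_{n\ge1}a_n e^{-\lambda_n t}e^{iny}\varphi_n(x)$; I shall only use data with finitely many nonzero $a_n$, so that $f$ is a finite sum of eigenfunctions and no convergence question arises. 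The decisive analytic fact — and the only place the precise potential $x^2$ is used — is that $-\partial_x^2+n^2x^2$ on the whole line has ground state $e^{-nx^2/2}$ with eigenvalue $n$, and that on $(-1,1)$ with Dirichlet conditions the same picture survives up to exponentially small corrections: $\lambda_n=n+O(e^{-cn})$ and $\varphi_n(x)=\gamma_n e^{-nx^2/2}+r_n(x)$ with $\gamma_n\asymp n^{1/4}$ and $\|r_n\|$ exponentially small in $n$. Granting this, if one chooses $a_n=\gamma_n^{-1}e^{\lambda_n T}c_n$ with $(c_n)$ the coefficients of a polynomial $P$ satisfying $P(0)=0$, then, writing $\zeta:=e^{(T-t)-x^2/2}e^{iy}$, one gets up to errors to be controlled
\[
 f(t,x,y)\;\approx\;\sum_{n\ge1}c_n\,\zeta^{\,n}\;=\;P(\zeta),
 \qquad\text{so in particular}\qquad f(T,x,y)\;\approx\;P\bigl(e^{-x^2/2}e^{iy}\bigr).
\]

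As $(t,x,y)$ runs over $[0,T]\times(-1,1)\times\set T$, the point $\zeta$ sweeps out the annular region $\{e^{-1/2}<|\zeta|<e^T\}$; when $y$ is restricted to $I:=\set T\setminus[a,b]$ it sweeps the annular sector $D_\omega$ with angles in $I$; and at $t=T$ alone it sweeps the full annulus $A_T:=\{e^{-1/2}<|\zeta|\le 1\}$. Performing these changes of variables turns $\|f(T,\cdot,\cdot)\|_{L^2(\Omega)}^2$ and $\|f\|_{L^2([0,T]\times\omega)}^2$ into $\int_{A_T}|P|^2 w_T$ and $\int_{D_\omega}|P|^2 w_\omega$, with explicit positive weights $w_T,w_\omega$ that are integrable and bounded below on compact subsets of the respective open regions. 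Since $A_T\setminus D_\omega$ is the ``forbidden sliver'' $\{e^{-1/2}<|\zeta|\le 1,\ \arg\zeta\in[a,b]\}$, which $A_T$ sees but the observation does not, the inequality \eqref{eq:observability} applied to these data reduces, up to the error terms, to: for every admissible polynomial $P$,
\[
 \int_{A_T}|P|^2\,w_T\;\leq\;C'\int_{D_\omega}|P|^2\,w_\omega .
\]

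This I would disprove by Runge's theorem. Fix a closed disc $E\subset\{e^{-1/2}<|\zeta|<1,\ a<\arg\zeta<b\}$ inside the forbidden sliver; then $\{0\}$, $\overline{D_\omega}$ and $E$ are pairwise disjoint compacts whose union has connected complement in $\widehat{\set C}$ (the open sector $\{a<\arg\zeta<b\}$ lies in that complement and joins the bounded component of $\set C\setminus\overline{D_\omega}$ to $\infty$, and the small disc $E$ does not disconnect it). By Runge's theorem one can approximate on $\{0\}\cup\overline{D_\omega}\cup E$ the function equal to $0$ near $\{0\}\cup\overline{D_\omega}$ and to $M$ near $E$; subtracting the value at the origin yields polynomials $P_k$ with $P_k(0)=0$, $|P_k|\le 1$ on $\overline{D_\omega}$ and $|P_k|\ge M_k-1$ on $E$, where $M_k\to\infty$. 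Then $\int_{D_\omega}|P_k|^2 w_\omega\le\|w_\omega\|_{L^1}$ stays bounded while $\int_{A_T}|P_k|^2 w_T\ge\int_E|P_k|^2 w_T\ge (M_k-1)^2\,|E|\inf_E w_T\to\infty$, so the displayed inequality fails along this sequence. Unwinding the construction, the finite-rank data $f_{k,0}=\sum_n\gamma_n^{-1}c_n^{(k)}e^{\lambda_n T}e^{iny}\varphi_n(x)$ (with $(c_n^{(k)})$ the coefficients of $P_k$) produce solutions $f_k$ of \eqref{eq:grushin} with $\sup_k\|f_k\|_{L^2([0,T]\times\omega)}<\infty$ and $\|f_k(T,\cdot,\cdot)\|_{L^2(\Omega)}\to\infty$, which is exactly Theorem~\ref{th:main_obs}.

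The hard part is not the Fourier reduction or the Runge step — those are comparatively routine — but making the approximation ``$\approx$'' above quantitative and \emph{uniform in $k$}. The coefficients of the Runge polynomials $P_k$ may grow fast both in their degree and in $M_k$, so the errors coming from $\lambda_n\neq n$ and from $r_n\neq 0$ must be controlled far more sharply than ``$O(e^{-cn})$'': precisely enough to be summed against those coefficients, for instance by writing $a_n$ as a contour integral of $P_k$ and deforming the contour, which requires $\lambda_\nu$ and $\varphi_\nu$ to depend holomorphically on $\nu$ in a complex domain while still obeying the ``$\nu+O(e^{-c\,\mathrm{Re}\,\nu})$'' type estimate. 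Establishing this refined, complex-parameter asymptotics for the first eigenvalue of $-\partial_x^2+(\nu x)^2$ with Dirichlet conditions is where I expect to spend most of the work, and it is the genuine technical core of the argument.
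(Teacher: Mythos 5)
Your high-level architecture --- Fourier-decompose in $y$, reinterpret the observability inequality as a weighted $L^2$ estimate on polynomials, disprove it via Runge's theorem --- is exactly the paper's, and your closing paragraph correctly locates the technical core in the complex-parameter asymptotics of $\lambda_\nu$ (the paper's Theorem~\ref{th:asymptotic}). But there is a structural choice in your reduction that creates a genuine gap, and it is not the one you flag.

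You absorb $x$ into the complex variable, setting $\zeta = e^{(T-t)-x^2/2+iy}$, exactly as the paper does in Section~\ref{sec:toy_to_real} for the $\set R\times\set T$ case. There this is legitimate because the first eigenfunction on the whole line \emph{is} the Gaussian $e^{-nx^2/2}$, so $v_n(x)e^{iny}e^{-nt}$ is a pure power of $z_x=e^{-x^2/2-t+iy}$. On $(-1,1)$ it is not: you write $\varphi_n(x)=\gamma_n e^{-nx^2/2}+r_n(x)$, and while the eigenvalue defect $\lambda_n-n$ produces a correction that factors as a radial multiplier $|\,\cdot\,|^{\rho_n}$ applied to $\zeta^n$, the term $r_n(x)$ does \emph{not} fold into a power of $\zeta$. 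The resulting ``error'' $\sum a_n r_n(x) e^{-\lambda_n t+iny}$ lives naturally in the variable $z=e^{-t+iy}$, not in $\zeta$, so after your change of variables it is not a multiplier acting on $P(\zeta)$. Saying $\|r_n\|$ is exponentially small is not enough, because your Runge coefficients $c_n^{(k)}$ grow without any rate you control, and summing a small-but-uncontrolled error against large-and-uncontrolled coefficients is exactly the point where the argument must not be heuristic. This is why the paper abandons the ``absorb $x$'' device in Section~\ref{sec:main_proof}: it keeps $x$ as a parameter, works in $z=e^{-t+iy}$, and treats the \emph{whole} factor $v_n(x)\,|z|^{\rho_n}$ (not its decomposition into Gaussian plus remainder) as the symbol of a ``holomorphy default operator'' $H_\gamma:\sum a_nz^n\mapsto\sum\gamma(n)a_nz^n$. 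Lemma~\ref{th:lemma1} then gives a bound uniform in $x$ and $\zeta$, with no need to split $\varphi_n$ at all.

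Two further mismatches with the tools that would be needed to close the gap. First, the multiplier estimate (Theorem~\ref{th:est_default}) is $L^\infty$-to-$L^\infty$ on a $\delta$-fattened set $U^\delta$, and its proof (writing $a_n$ as a Cauchy integral and deforming the contour, precisely your suggestion) requires the domain to be star-shaped with respect to $0$. Your target inequality is $L^2$-to-$L^2$ on $D_\omega$, which is an annular sector, not star-shaped from $0$, so the contour-deformation mechanism does not apply as stated; the paper deliberately works with $U=\{0<|z|<1,\arg z\in\omega_y\}$ and its $\delta$-fattening to stay within the theorem's hypotheses, and it is the fattening $U^\delta$ that absorbs the non-holomorphic multiplier. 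Second, even the Runge step needs this fattening: the paper's $z_0$ is chosen at distance $>\delta$ from $U$, so that $f_k$ stays bounded on $U^\delta$, not merely on the observation region.

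So while the outline is correct in spirit and your own ``hard part'' is indeed where the work lies, the specific reduction you set up is not the one the available estimates can carry. The fix is the paper's: do not absorb $x$ into $\zeta$; keep $z=e^{-t+iy}$, view $v_n(x)|z|^{\rho_n}$ as a single symbol $\gamma_{\zeta,x}(n)$ belonging to the class $S(r)$, and pass through the $L^\infty(U^\delta)$ multiplier estimate of Theorem~\ref{th:est_default}. The Runge target ($z\mapsto(z-z_0)^{-1}$ versus your bump function) is a cosmetic difference.
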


To find such a sequence, we look for solutions of the Grushin 
equation~\eqref{eq:grushin} that concentrate near $x=0$. To that end, we remark that, 
denoting $v_{n,k}$ an eigenfunction of the operator $-\partial_x^2 + (n x)^2$ with 
Dirichlet boundary conditions on $(-1,1)$ associated with eigenvalue $\lambda_{n,k}$, 
$\Phi_{n,k}(x,y) = v_{n,k}(x) e^{iny}$ is an eigenfunction of the Grushin operator 
$-\partial_x^2 -x^2\partial_y^2$ with eigenvalue $\lambda_{n,k}$. In addition, we 
expect that the first eigenfunction $v_n = v_{n,0}$ of $-\partial_x^2 +(nx)^2$ on 
$(-1,1)$ is close to the first eigenfunction of the same operator on $\set R$, that 
is $v_n \sim (\frac n{4\pi})^{1/4} e^{-nx^2/2}$, and that the associated eigenvalue 
$\lambda_n=\lambda_{n,0}$ is close to $n$.
So it is natural to look for a counterexample of the observability inequality 
\eqref{eq:observability} among the linear combinations of $\Phi_n(x,y) = v_n(x) 
e^{iny}$ for $n\geq 0$.

In Section \ref{sec:toy_model}, we will see by heuristic arguments and with the help 
of these approximations that the problem 
of the controllability of the Grushin equation is close to 
the controllability of the square root of minus the Laplacian, and show that this 
model is not null controllable. As another warm-up, we will show in section 
\ref{sec:toy_to_real} that the method used for treating the square root of minus the 
Laplacian allows us to treat with little changes the case of the Grushin equation for 
$(x,y)\in \set R\times \set T$.

The case of the Grushin equation for $(x,y)\in 
(-1,1)\times \set T$ (Theorem \ref{th:main}) gave us much more trouble, but in 
Section \ref{sec:main_proof} we are able to adapt the method used in the previous 
two cases. To achieve that, we use some technical tools that are proved in later 
sections. First, in Section \ref{sec:est_def_op}, estimates on polynomials of the 
form $|\sum \gamma_n a_n z^n|_{L^\infty(U)} \leq C |\sum a_n 
z^n|_{L^\infty(U^\delta)}$, under a simple geometric hypothesis on $U$, and some 
general---although somewhat hard to prove---hypotheses 
on the sequence $(\gamma_n)$ (Theorem \ref{th:est_default}). Second, in Section 
\ref{sec:spectral}, a spectral analysis of the operator $-\partial_x^2 + (n 
x)^2$ on $(-1,1)$; most importantly an asymptotic expansion of the first eigenvalue 
$\lambda_n$ of the form $\lambda_n = n + \gamma(n) e^{-n}$ with $\gamma(n) \sim 4 
\pi^{-1/2}n^{3/2}$, and $\gamma$ having a particular holomorphic structure (Theorem 
\ref{th:asymptotic}).

\section{Proof of the non null controllability of the Grushin equation}
\subsection{The toy model}\label{sec:toy_model}

Let us write the observability inequality on functions of the form $\sum a_n v_n(x) 
e^{iny}$ (keeping in mind that $v_n(x)$ is real, and noting $\omega_y = \set 
T\setminus [a,b]$ so that $\omega = (-1,1)\times 
\omega_y$):
\begin{equation}
    \sum_{n} |a_n|^2 e^{-2\lambda_nT} \leq
    C\sum_{n,m} a_n\overline{a_m} \int_{-1}^1 v_n(x) v_m(x)\diff x
    \int_{0}^T e^{-(\lambda_n + \lambda_m)t}\diff t
    \int_{y\in \omega_y} \mkern-15mue^{i(n-m)y}\diff y.
\end{equation}

Now let us proceed by heuristic arguments to see what we can expect from the  
estimates on the eigenvalues $\lambda_n$ and the eigenfunctions $v_n$ we mentioned 
in Section \ref{sec:outline}. We imagine 
that in the previous inequality, $\lambda_n = n$ and $\int_{-1}^1 v_nv_m =  
\frac1{\sqrt{4\pi}}(nm)^{1/4} \int_{\set R}e^{-(n+m)x^2/2}\diff x = \sqrt 
2\frac{(nm)^{1/4}}{\sqrt{n+m}}$, which does not decay very fast off-diagonal, so we 
further imagine that $\int_{-1}^1 v_n v_m = 1$. Then, with these approximations, the 
previous observability inequality reads:
\begin{equation}\label{eq:model_observability}
\begin{split}
  \sum_n |a_n|^2 e^{-2nT} \leq C \sum_{n,m} a_n \overline{a_m}
  \int_{[0,T]\times \omega_y} e^{-(n+m)t +i(n-m)y} \diff t \diff y \\
  =C\int_{[0,T]\times \omega_y} \left|\sum a_ne^{-nt+iny}\right|^2\diff t \diff y.
  \end{split}
\end{equation}

This suggests that the controllability problem of the Grushin equation 
\eqref{eq:control_problem_eng} is similar to the following model control problem: 
let us consider the Hilbert space $\{\sum_{n\geq 0} a_n e^{iny}, \sum |a_n|^2 
<+\infty\}$, $D$ the unbounded operator on this space with domain $\{\sum 
a_n e^{iny},\sum n^2|a_n|^2 <+\infty\}$ defined by $D(\sum a_ne^{iny}) = 
\sum na_n e^{iny}$.  Then the null controllability of the equation $(\partial_t + 
D)f = \mathds1_\omega u$ on an open set $\omega = \set T\setminus[a,b]$ in time $T$ 
is equivalent to the previous ``simplified'' observability inequality 
\eqref{eq:model_observability}, which does not hold:
\begin{theorem}
  Let $[a,b]$ be a nontrivial segment of $\set T$, $\omega_y = \set T \setminus 
[a,b]$ and $T>0$. The equation $(\partial_t+D)f = \mathds1_{\omega_y} u$ is not 
null controllable on $\omega_y$ in time $T$.
\end{theorem}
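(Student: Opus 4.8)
The plan is to show directly that the observability inequality \eqref{eq:model_observability} fails, by exhibiting a sequence of coefficient vectors $(a_n)$ for which the left-hand side stays bounded below while the right-hand side tends to $0$. The key observation is that the solution of $(\partial_t + D)f = 0$ with data $\sum a_n e^{iny}$ is $f(t,y) = \sum a_n e^{-nt} e^{iny}$, so if we set $z = e^{-t+iy}$ we are looking at the values of the holomorphic function $g(z) = \sum a_n z^n$ on a region of the complex plane. As $t$ runs over $[0,T]$ and $y$ over $\set T$, the point $z = e^{-t+iy}$ fills the annulus $\{e^{-T}\le |z|\le 1\}$, and restricting $y$ to $\omega_y = \set T\setminus[a,b]$ removes a closed sector; call the resulting compact set $K \subset \overline{\set D}$. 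Meanwhile, evaluating at $t=T$ and integrating over $y\in\set T$ gives, by Parseval, $\sum |a_n|^2 e^{-2nT}$, which is (up to the constant $2\pi$) $\|g\|_{L^2(|z|=e^{-T})}^2$. So \eqref{eq:model_observability} would say
\[
  \|g\|_{L^2(|z| = e^{-T})}^2 \le C' \|g\|_{L^2(K)}^2
\]
for every polynomial (or every $g$ holomorphic on $\overline{\set D}$), where $K$ is a compact subset of $\overline{\set D}$ that does not contain the full circle $|z| = e^{-T}$ — indeed $K$ misses an open neighborhood of some arc of that circle.

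The heart of the argument is then an application of Runge's theorem. The complement of $K$ in the Riemann sphere is connected (removing a closed sector touching the unit circle from the annulus leaves a connected complement, and one should check this carefully — it is the one genuinely geometric point). Pick a point $z_0$ on the circle $|z| = e^{-T}$ lying in the removed sector, so $z_0\notin K$. By Runge's theorem, the function $z\mapsto (z-z_0)^{-1}$ can be approximated uniformly on $K$ by polynomials $P_j$. Since $K$ is compact and bounded away from $z_0$ only on $K$ — but $z_0$ lies on the circle $|z|=e^{-T}$, so $P_j(z)$ must blow up near $z_0$ along that circle — the sequence $P_j$ has $\|P_j\|_{L^2(K)}$ bounded while $\|P_j\|_{L^2(|z|=e^{-T})}\to\infty$. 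More precisely, $\|P_j\|_{L^2(K)} \to \|(z-z_0)^{-1}\|_{L^2(K)} < \infty$, whereas on the circle $|z| = e^{-T}$ the functions $P_j$ converge in $L^1_{loc}$ away from $z_0$ to $(z-z_0)^{-1}$, which is not in $L^2$ of that circle because the singularity is non-integrable in the $L^2$ sense; hence $\liminf_j \|P_j\|_{L^2(|z|=e^{-T})} = \infty$. Writing $P_j(z) = \sum_n a_n^{(j)} z^n$ and translating back, $f_{j,0} = \sum_n a_n^{(j)} e^{iny}$ is the desired counterexample sequence, and by the duality characterization recalled in Section \ref{sec:outline} this disproves null controllability.

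The main obstacle I anticipate is not the functional-analytic packaging but the two "soft" geometric/measure-theoretic checks: first, that $\widehat{\set C}\setminus K$ is connected so that Runge applies with polynomial (not merely rational) approximants; and second, the quantitative claim that $\|P_j\|_{L^2(|z|=e^{-T})}\to\infty$ — one wants to be careful that the convergence $P_j\to (z-z_0)^{-1}$ uniform on $K$ really forces the $L^2$ norm on the circle to diverge, which follows because a bounded-in-$L^2(|z|=e^{-T})$ subsequence of $P_j$ would have a weak limit agreeing with $(z-z_0)^{-1}$ on the part of the circle inside $K$ and being holomorphic off that circle, contradicting the non-square-integrability of $(z-z_0)^{-1}$ near $z_0$. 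A cleaner route is to choose $z_0$ and a short subarc $I$ of $|z|=e^{-T}$ around $z_0$ lying in the removed sector, note $\mathrm{dist}(z_0, K) > 0$ is false — rather $z_0\in\partial K$ possibly — so instead take $z_0$ strictly inside the removed open sector, at distance $\delta>0$ from $K$; then $\|P_j - (z-z_0)^{-1}\|_{L^\infty(K)}\to 0$ gives the numerator bound, and on $I$ (which is at distance $0$ from $z_0$) we directly estimate $\|P_j\|_{L^2(I)}$ from below using $\|P_j - (z-z_0)^{-1}\|_{L^\infty(K)}\to 0$ together with a lower bound coming from the pole. This is the step to write with care; everything else is bookkeeping with Parseval and the duality equivalence.
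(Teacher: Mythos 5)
Your overall skeleton (Runge's theorem plus a blowup near a point where we have no control) is the same as the paper's, but you keep the left-hand side as a one-dimensional $L^2$ norm on the circle $\{|z|=e^{-T}\}$, whereas the paper underestimates $\sum |a_n|^2 e^{-2nT}$ by the \emph{planar} $L^2$ norm on the disk $D(0,e^{-T})$ (using $\int_{D(0,e^{-T})}|z|^{2n-2}\,d\lambda = \pi e^{-2nT}/n \le \pi e^{-2nT}$). This difference is not cosmetic, and it is where your write-up has a genuine gap.

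With the circle norm, the place where the $P_j$ must blow up is the ``blind'' arc $I\subset\{|z|=e^{-T}\}$ around $z_0$ — and $I$ is \emph{disjoint} from $K$, so the hypothesis $\|P_j - (z-z_0)^{-1}\|_{L^\infty(K)}\to 0$ tells you nothing about $P_j$ on $I$. Your ``cleaner route'' at the end — estimating $\|P_j\|_{L^2(I)}$ from below ``using $\|P_j - (z-z_0)^{-1}\|_{L^\infty(K)}\to 0$ together with a lower bound coming from the pole'' — therefore does not go through: uniform approximation on $K$ is consistent with \emph{any} behaviour of the polynomials on $I$, small or large. Your middle argument (a bounded-in-$L^2(|z|=e^{-T})$ subsequence would have a weak limit in $H^2(D(0,e^{-T}))$ whose boundary values agree with $(z-z_0)^{-1}$ on the positive-measure arc $A=K\cap\{|z|=e^{-T}\}$, contradicting $(z-z_0)^{-1}\notin H^2$) \emph{is} sound, but it silently invokes the Luzin--Privalov boundary-uniqueness theorem (a holomorphic function on the disk whose angular limits vanish on a set of positive measure is identically zero). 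That is a real theorem, considerably heavier machinery than what the statement needs.

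The paper's area-integral trick exists precisely to avoid this. By moving the left-hand side to $\int_{D(0,e^{-T})}|f|^2\,d\lambda$ and choosing the pole at $z_0=0$, the singularity lies \emph{strictly inside} the domain of integration, $1/z\notin L^2(D(0,e^{-T}),d\lambda)$, and Fatou's lemma then gives the divergence immediately; the ray $e^{i\theta}\mathbb{R}_+$ where pointwise convergence fails has planar measure zero. On the right-hand side, since $\theta$ is not adherent to $\omega_y$, the closure of $\mathcal D$ avoids that ray, so $f_k$ is uniformly bounded on $\mathcal D$. No boundary-value theory is needed. Your observations that $\widehat{\mathbb{C}}\setminus K$ is connected and that Runge's theorem applies to a compact set with connected complement are both correct and equivalent to the paper's simply-connected-open formulation; but you should either adopt the paper's two-dimensional reformulation, or else fill in your argument with an explicit appeal to Luzin--Privalov rather than the (incorrect) direct estimate of $\|P_j\|_{L^2(I)}$.
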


Incidentally, this is an answer to a specific case of an open problem mentioned 
by Miller \cite[][section 3.3]{miller_controllability_2006} and again by 
Duyckaerts and Miller \cite[][remark 6.4]{duyckaerts_resolvent_2012}.

\begin{proof}
The right hand side of the observability inequality \eqref{eq:model_observability} 
suggests to make the change of variables $z=e^{-t+iy}$, for which\footnote{We denote 
$\lambda$ the Lebesgue measure on $\set C$; that is if 
$(\mu,\nu)\mapsto f(\mu+i\nu)$ is integrable on $\set R^2$, $\int_{\set C} 
f(z)\diff\lambda(z) = \int_{\set 
R^2} f(\mu + i\nu)\diff\mu \diff\nu $.} $\diff[0] t\diff y = 
|z|^{-2}\diff\lambda(z)$, 
and that maps $[0,T]\times \omega_y$ to $\mathcal D = \{e^{-T}<|z|<1, \arg(z)\in 
\omega_y\}$ (see figure \ref{fig:model}). So, the 
right hand side of the observability inequality~\eqref{eq:model_observability} is 
equal to:
\begin{equation}\label{eq:obs_hol_rhs_R}
\int_{[0,T]\times \omega_y} \left|\sum a_n 
e^{-nt+iny}\right|^2\diff t\diff y = \int_{\mathcal D} \left|\sum a_n 
z^{n}\right|^2|z|^{-2} \diff\lambda(z).
\end{equation}

About the left hand side, we first note that by writing the integral on a disk 
$D=D(0,r)$ of $z^n\bar z^m$ in polar coordinates, we find that the functions 
$z\mapsto z^n$ are orthogonal on $D(0,r)$. So, we have for all 
polynomials $\sum_{n\geq 1} a_n z^n$ with a zero at $0$: 
\[\int_{D(0,e^{-T})} \left|\sum a_n z^n\right|^2 |z|^{-2} \diff \lambda(z) 
= \sum |a_n|^2 \int_{D(0,e^{-T})} |z|^{2n-2} \diff \lambda(z)\]
and, combined with the fact that by another computation in polar coordinates, for 
$n\geq 1$, $\int_{D(0,e^{-T})}|z|^{2n-2}\diff \lambda(z) = \frac\pi ne^{-2nT}$:
\begin{equation}\label{eq:obs_hol_lhs_R}
\int_{D(0,e^{-T})} \left|\sum a_n z^n\right|^2 |z|^{-2} \diff \lambda(z) 
\leq \pi \sum |a_n|^2 e^{-2nT}.
\end{equation}

So, thanks to equations \eqref{eq:obs_hol_rhs_R} and \eqref{eq:obs_hol_lhs_R}, the 
observability inequality \eqref{eq:model_observability} implies that for some 
$C'>0$ and for all polynomials $f$ with $f(0) = 0$:
\[\int_{D(0,e^{-T})} |f(z)|^2|z|^{-2} \diff \lambda(z) \leq C' 
\int_{\mathcal D} |f(z)|^2|z|^{-2} \diff \lambda(z).\]

By the change of indices $n' = n-1$ in the sum $f(z) = \sum_{n\geq 1} a_n z^n$, we 
rewrite this ``holomorphic observability inequality'' in the following, slightly 
simpler way: for every polynomials $f$:
\begin{equation}\label{eq:model_hol_obs}
  \int_{D(0,e^{-T})} |f(z)|^2\diff \lambda(z) \leq C' \int_{\mathcal D} 
|f(z)|^2\diff \lambda(z).
\end{equation}

This is the main idea of the proof: the observability inequality of the control 
problem is almost the same as an $L^2$ estimate on polynomials. We will disprove 
it thanks to Runge's theorem, whose proof can be found in 
Rudin's famous textbook \cite[][theorem 13.9]{rudin_real_1986}. More 
specifically, we will need the following special case:
\begin{prop}[Runge's theorem]\label{th:runges_th}
  Let $U$ be a connected and simply connected open subset of $\set C$, and let $f$ 
be an holomorphic function on $U$. There exists a sequence $(f_n)$ of polynomials 
that converges uniformly on every compact subsets of $U$ to $f$.
\end{prop}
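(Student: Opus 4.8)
The plan is to follow the classical proof of Runge's theorem, built from the Cauchy integral formula together with a ``pole--pushing'' lemma, with simple connectivity intervening at exactly one point. First I would reduce the statement to a local one: it suffices to show that for every compact $K\subseteq U$ and every $\epsilon>0$ there is a polynomial $p$ with $\sup_K|f-p|<\epsilon$. Indeed, writing $U=\bigcup_j K_j$ with $K_j=\{z:|z|\le j,\ \operatorname{dist}(z,\set C\setminus U)\ge 1/j\}$, an increasing sequence of compact sets with $K_j\subseteq\operatorname{int}(K_{j+1})$, and choosing $p_j$ with $\sup_{K_j}|f-p_j|<1/j$, one obtains a sequence of polynomials converging to $f$ uniformly on every compact subset of $U$, since any such compact lies in some $K_j$.

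Second, I would fix a compact $K\subseteq U$ and reduce to the case where $\set C\setminus K$ is connected. Let $\widehat K$ be the union of $K$ with the bounded connected components of $\set C\setminus K$; then $\set C\setminus\widehat K$ is the unbounded component of $\set C\setminus K$, so $\widehat K$ is compact with connected complement. Moreover $\widehat K\subseteq U$: a bounded component $V$ of $\set C\setminus K$ has $\partial V\subseteq K\subseteq U$ and $\infty\notin\overline V$, so it cannot contain a point of the connected set $(\set C\cup\{\infty\})\setminus U$ (such a point, together with $\infty$, would be separated across $\partial V$); hence $\overline V\subseteq U$. This is the only place where simple connectivity -- in the form ``$(\set C\cup\{\infty\})\setminus U$ is connected'' -- is used. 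Replacing $K$ by $\widehat K$, I may thus assume $\set C\setminus K$ is connected while keeping $K\subseteq U$.

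Third, I would represent $f$ on $K$ by a Cauchy integral and discretize it. Covering $K$ by a fine square grid and taking $\Gamma$ to be the suitably oriented boundary of the union of the closed grid squares that meet $K$, one gets a cycle $\Gamma$ contained in $U\setminus K$ with winding number $1$ about every point of $K$ and $0$ about every point of $\set C\setminus U$; Cauchy's theorem then yields $f(z)=\frac1{2\pi i}\int_\Gamma\frac{f(\zeta)}{\zeta-z}\,d\zeta$ for all $z\in K$. Since the integrand is uniformly continuous on $\Gamma\times K$ (the denominator is bounded below because $\Gamma\cap K=\emptyset$), its Riemann sums converge uniformly on $K$, so $f$ is a uniform limit on $K$ of rational functions with poles on $\Gamma$, i.e. in $\set C\setminus K$. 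It then remains to approximate, uniformly on $K$, a single simple fraction $z\mapsto(a-z)^{-1}$ with $a\in\set C\setminus K$ by polynomials. This is the pole--pushing step: the set of $a\in\set C\setminus K$ for which $(a-\cdot)^{-1}$ lies in the uniform closure on $K$ of the polynomials (a closed subalgebra of $C(K)$) is open -- if $|a'-a|<\operatorname{dist}(a,K)$ then $(a'-z)^{-1}=\sum_{k\ge0}(a-a')^k(a-z)^{-k-1}$ converges uniformly on $K$, and each $(a-z)^{-k-1}$ is in that algebra -- and closed by the same expansion, and it is nonempty since $(a-z)^{-1}=a^{-1}\sum_{k\ge0}(z/a)^k$ whenever $|a|>\sup_K|z|$; as $\set C\setminus K$ is connected, the set is all of $\set C\setminus K$, which completes the proof.

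Conceptually the one real obstacle is the use of simple connectivity, which enters only through the inclusion $\widehat K\subseteq U$; for a non-simply-connected $U$ the same argument gives only approximation by rational functions with poles in the holes of $U$. Technically, the fiddliest point is the construction of the grid cycle $\Gamma$ with the stated winding numbers and its separation from $K$; all remaining estimates are routine. As an alternative, one could argue by duality: if the polynomials were not dense in the Fréchet space $\mathcal O(U)$, Hahn--Banach and the Riesz representation theorem would furnish a measure $\mu$ with compact support in some $K\subseteq U$ annihilating all polynomials; its Cauchy transform $w\mapsto\int(z-w)^{-1}\,d\mu(z)$ would vanish for $|w|$ large, hence on the unbounded component of $\set C\setminus K$, and simple connectivity would propagate this to a neighbourhood of $K$, forcing $\int f\,d\mu=0$ and a contradiction -- but the Cauchy-integral proof above is more self-contained.
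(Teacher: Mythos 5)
The paper does not prove this proposition itself; it simply cites Rudin's textbook (Theorem 13.9), so there is no in-paper argument to compare against. Your proof is a correct and complete rendition of that standard argument --- exhaustion by compacts, passage to the hull $\widehat K$ (the only point where simple connectivity of $U$ enters, via connectedness of $(\set C\cup\{\infty\})\setminus U$), the Cauchy integral over a grid cycle, Riemann-sum discretization into simple fractions, and pole-pushing along the connected complement of $\widehat K$ --- which is essentially the proof the paper defers to.
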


Let $\theta \in \set T$ non-adherent to $\omega_y$ (for instance $\theta = 
(a+b)/2)$). We choose in the previous theorem $U = \set C \setminus e^{i\theta}\set 
R_+$ (see figure \ref{fig:model}) and $f(z) = \frac1z$. Since $z\mapsto\frac1z$ is 
bounded on $\mathcal D$, $f_n$ is uniformly bounded on $\mathcal D$ and the right 
hand side of the holomorphic observability inequality \eqref{eq:model_hol_obs} 
$\int_{\mathcal D} |f_n|^2\diff\lambda(z)$ stays bounded. But since 
$z\mapsto\frac1z$ has infinite $L^2$ norm on $D(0,e^{-T})$, and thanks to Fatou's 
lemma, the left hand side $\int_{D(0,e^{-T})} |f_n|^2\diff\lambda(z)$ tends to 
infinity as $n$ tends to infinity. \qedhere

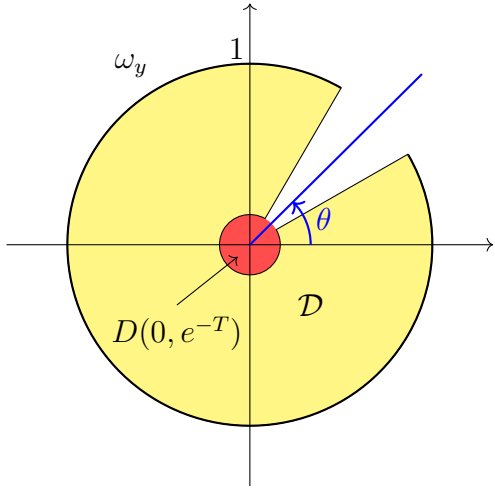
\begin{figure}
\begin{minipage}[c]{0.5\textwidth}
 \begin{center}
 \begin{tikzpicture}[scale=0.8]
  \fill[red!70] (0,0) circle[radius=0.5];
  \draw[fill = yellow!60] (60:0.5)
  arc[radius = 0.5, start angle = 60, delta angle = 330] --
  ($(0,0) +(30:3)$) arc[radius = 3, start angle = 
30, delta angle = -330] -- cycle;
  \draw[thick] (60:3) arc[radius = 3, start angle=60, delta angle=330];
  \node[above left] at (120:3) {$\omega_y$};
  \draw[->] (-4,0) -- (4,0);
  \draw[->] (0,-4) -- (0,4);
  
  \draw[thick, blue] (0,0) -- (45:4);
  \draw[->,thick, blue] (0:1) arc [radius = 1, start angle=0, delta angle=45];
  \node[right, blue] at (25:1) {$\theta$};
  \path (0,3) node [anchor=south east, inner sep = 0.15 em]{$1$};
  \path (1,-1) node{$\mathcal D$};
  \draw[->] (-1.2,-1) node[below]{$D(0,e^{-T})$} -- (-0.2,-0.2);
\end{tikzpicture}
\end{center}
\end{minipage}\hfill%
\begin{minipage}[c]{0.5\textwidth}
\caption{In yellow, the domain $\mathcal D$, in red, the disk $D(0,e^{-T})$. The 
thick outer circular arc is the subset $\omega_y$ of $\set T \simeq \set U$. The 
controllability of the model operator $D$ on $\omega_y$ in time $T$ would 
imply the the control of the $L^2(D(0,e^{-T}))$-norm of polynomials by their 
$L^2$ norm on $\mathcal D$.}\label{fig:model}
\end{minipage}
\end{figure}
\end{proof}

\begin{remark}
 This proof is specific to the one dimensional case, as it relies on the observation 
that the 
solutions of the equation $(\partial_t + D)f = 0$ are holomorphic in $z=e^{-t+iy}$. 
As far as the author knows, this argument does not generalize to higher dimension.
\end{remark}

\subsection{From the toy model to the Grushin equation: the 
case of the Grushin equation on $\set R\times \set T$}\label{sec:toy_to_real}
We show here that the method we used for the toy model is also effective to prove 
that the Grushin equation for $(x,y)\in \set R\times\set T$, i.e. the equation
\begin{equation}\label{eq:grushin_R}
\begin{aligned}
(\partial_t - \partial_x^2 - x^2\partial_y^2)f(t,x,y)  &= 0 
&&\  t\in [0,T], (x,y)\in \set R\times \set T\\
f(t,\cdot,\cdot) &\in L^2(\set R\times \set T)\\
f(0,x,y) &= f_0(x,y) &&(x,y)\in \Omega,
\end{aligned}
\end{equation}\nopagebreak%
where we choose $\omega = \set R \times \omega_y = \set R\times (\set T\setminus 
[a,b])$, is not null-controllable in any time.

In this case, the (unbounded) operator $-\partial_x^2 +(nx)^2$ on $L^2$ is perfectly 
known: 
its first eigenvalue is $n$ and the associated eigenfunction\footnote{We choose to 
normalize the eigenfunction so that $v_n(0) = 1$ instead of $|v_n|_{L^2(\set R)} = 
1$, that way, the proof will be slightly easier. Note that with this choice of 
normalization, we have $|v_n|_{L^2(\set R)} = (\pi/n)^{1/4}$.} is $v_n(x) = e^{-n 
x^2/2}$. So 
the functions $\Phi_n$ defined by $\Phi_n(x,y) = e^{-n x^2/2} e^{iny}$ are 
eigenfunctions of the operator $\partial_x^2 +x^2 \partial_y^2$ with 
respective eigenvalue $n$.

Let us write the associated observability inequality on the solutions of the Grushin 
equation of the form $\sum a_n e^{-nt}\Phi_n(x,y)$, where the sum has 
finite support\footnote{We could extend all the following estimates by density to 
some functional spaces, but we won't need to, as we didn't need to extend the 
estimate \eqref{eq:model_hol_obs} to other functions than polynomials.}:

\begin{equation}\label{eq:obs_grushin_R}
  \sum \left(\frac\pi n\right)^{1/2}|a_n|^2 e^{-2n T}
  \leq C \int_{x\in\set R} \left(\int_{\substack{t \in [0,T]\\y\in \omega_y}} 
\left|\sum 
 a_n e^{n\left(-\frac{x^2}2 -t + iy\right)}\right|^2\diff t \diff y\right) \diff x.
\end{equation}

The first difference between this observability inequality, that we 
try to 
disprove, and the observability inequality of the toy model 
\eqref{eq:model_hol_obs}, is the factor $(\pi/n)^{1/2}$, but it is not a real 
problem. The 
main difference is 
the presence of another variable: $x$. For each $x$, the term $e^{-nx^2/2}$ 
acts as a contraction of $\mathcal D$, so we make the change of variable that takes 
into account this contraction $z_x = e^{-x^2/2 -t + iy}$. We have $\diff[-6] t\diff 
y = |z_x|^{-2}\diff \lambda(z_x)$, and this change of variables sends 
$(0,T)\times \omega_y$ to $e^{-x^2/2}\mathcal D$, with, as in the toy 
model, $\mathcal D = \{e^{-T} < |z| < 1, \arg(z)\in \omega_y\}$:
\begin{equation*}
 \sum \left(\frac\pi n\right)^{1/2}|a_n|^2 e^{-2nT} \leq C\int_{x\in \set R} 
\int_{e^{-x^2/2}\mathcal D} 
\left|\sum a_n z^{n-1}\right|^2 \diff \lambda(z)\diff x.
\end{equation*}
We have seen in the toy model that for all polynomials $f(z)=\sum_{n\geq 1} a_n z^n$ 
with $f(0) = 0$ that $\int_{D(0,e^{-T})} |f(z)|^2|z|^{-2}\diff \lambda(z) = 
\pi\sum\frac1n|a_n|^2e^{-2nT}$, which is smaller than the left hand side of the 
observability inequality \eqref{eq:obs_grushin_R}, up to a constant $\sqrt \pi$. 
So, as 
in the toy model, this would imply that for all polynomials $f$:
\begin{equation}\label{eq:obs_hol_grushin_R}
 \int_{D(0,e^{-T})} |f(z)|^2\diff\lambda(z) \leq \sqrt \pi C\int_{x\in \set R} 
\int_{e^{-x^2/2}\mathcal D} |f(z)|^2\diff \lambda(z)\diff x.
\end{equation}

We want to apply the same method as the one used in the toy model to disprove 
this inequality, but we have to be a little careful: the right hand side exhibits an 
integrals over $e^{-x^2/2}\mathcal D$, and as $x$ tends to infinity, $0$ becomes 
arbitrarily close to the integration set. So, instead of choosing a sequence of 
polynomials that blows up at $z=0$, we choose one that blows up away from $0$ and 
from every $e^{-x^2/2}\mathcal D$. More precisely, we choose $\theta\notin 
\overline{\omega_y}$, $z_0 = e^{i\theta-2T}$, and $f_k$ a sequence of polynomials 
that converges to $z\mapsto (z-z_0)^{-1}$ uniformly on every compact of $\set 
C\setminus (z_0[1,+\infty[)$ (see Figure \ref{fig:grushin_R}).
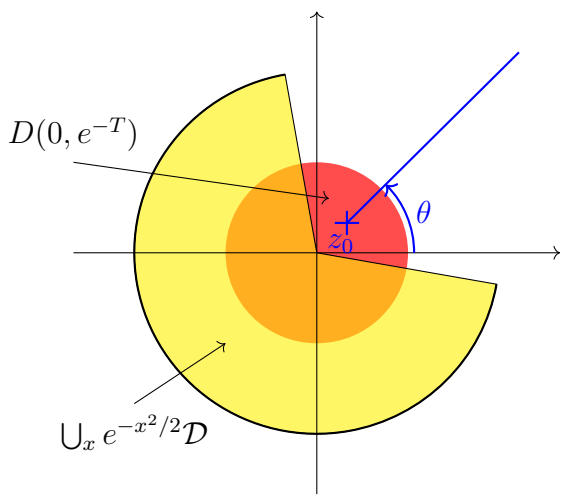
\begin{figure}[ht]
\begin{minipage}[c]{0.57\textwidth}
 \begin{center}
 \begin{tikzpicture}[scale=0.8]
  \fill[red, fill opacity = 0.7] (0,0) circle[radius=1.5];
  \draw[fill = yellow, fill opacity = 0.6] (0,0) --
  ($(0,0) +(-10:3)$) arc[radius = 3, start angle = 
-10, delta angle = -250] -- cycle;
  \draw[thick] (100:3) arc[radius = 3, start angle=100, delta angle=250];
  \draw[->] (-4,0) -- (4,0);
  \draw[->] (0,-4) -- (0,4);
  
  \draw[->,thick, blue] (0:1.6) arc [radius = 1.6, start angle=0, delta angle=45];
  \node[blue,right] at (25:1.6) {$\theta$};
  \draw[blue, thick] (45:0.7) +(-0.2,0) -- +(0.2,0) +(0,0.2) -- +(0,-0.2) +(0,0) -- 
+(45:4) +(-0.1,0) node[below]{$z_0$};
  \draw[->] (-3,-2.5) node[below]{$\bigcup_x e^{-x^2/2}\mathcal D$} -- (-1.5,-1.5);
  \draw[->] (-4,1.5) node[above]{$D(0,e^{-T})$} -- (0.2,0.9);
\end{tikzpicture}
\end{center}
\end{minipage}\hfill%
\begin{minipage}[c]{0.43\textwidth}
\caption{The equivalent of Figure \ref{fig:model} for the Grushin 
equation. Again in red, the disk $D(0,e^{-T})$, and in yellow the union of 
$e^{-x^2/2}\mathcal D$, which is the ``pacman'' $\{0<|z|<1, \arg(z)\in \omega_y\}$. 
We choose a sequence of polynomials that converges to $z\mapsto 
(z-z_0)^{-1}$ away from the blue half-line.}\label{fig:grushin_R}
\end{minipage}
\end{figure}

With the same argument as in the toy model, we know that the left-hand side
$\int_{D(0,e^{-T})}|f_k(z)|^2\diff\lambda(z)$ tends to infinity as $k$ tends to 
infinity. As for the right hand-side, since $z\mapsto (z-z_0)^{-1}$ is bounded in 
$\bigcup_x e^{-x^2/2}\mathcal D = \{0<|z|<1, \arg(z)\in\omega_y\}$, $f_k$ is bounded 
on $e^{-x^2/2}\mathcal D$ uniformly in $x\in \set R$ and $k\in \set N$ by some $M$. 
So, the right hand side satisfies\footnote{Let us remind that $\lambda$ is the 
Lebesgue measure on $\set C$, so, for $A\subset \set C$ measurable, $\lambda(A)$ is 
the area 
of~$A$.}:

\begin{align*}
 \int_{x\in \set R} \int_{e^{-x^2/2}\mathcal D} |f(z)|^2\diff \lambda(z)\diff x
 &\leq \int_{x\in \set R} \int_{e^{-x^2/2}\mathcal D} M^2 \diff\lambda(z) \diff x \\
 &\leq \int_{x\in \set R} \lambda(e^{-x^2/2}\mathcal D)M^2 \diff x\\
 &\leq \int_{x\in \set R} \pi e^{-x^2} M^2 \diff x\\
 &\leq \pi^{3/2} M^2.
\end{align*}

We have proved that the left hand side of inequality \eqref{eq:obs_hol_grushin_R} 
applied to $f = f_k$ tends to infinity as $k$ tends to infinity while it's right 
hand side stays bounded, thus, this inequality is false, and the Grushin equation 
for $(x,y)\in \set R\times \set T$ is never null-controllable in $\omega = \set 
R\times(\set T\setminus [a,b])$.\qed

\subsection{The case of the Grushin equation on $(-1,1)\times \set 
T$}\label{sec:main_proof}
Here we show the main theorem. In comparison with the previous case, we have two 
difficulties: $\lambda_n$ is not exactly $n$, and $v_n(x)$ is not exactly 
$e^{-nx^2/2}$. Let us write the observability 
inequality\footnote{As in 
the previous cases, all the sums are supposed with finite support. We 
could extend by density all the inequalities that follow, but we won't need to.} on 
$\sum a_n e^{-\lambda_n t}v_n(x)e^{iny}$, where $\lambda_n = n+\rho_n$:
\begin{equation}\label{eq:obs}
  \sum |v_n|^2_{L^2(-1,1)}|a_n|^2 e^{-2\lambda_nT}
  \leq C \int_{\substack{t\in[0,T]\\ x\in(-1,1)\\y\in \omega_y}} 
  \left|\sum a_n v_n(x) e^{n(-t+iy)}e^{-\rho_nt}\right|^2\diff t\diff y\diff x.
\end{equation}

As in the previous two cases, the first step is to relate this inequality to an 
estimate on polynomials:
\begin{prop}\label{th:obs_hol}
Let $U = \{0<|z|<1, \arg(z)\in \omega_y\}$, let $\delta>0$ and $U^\delta = \{{z\in 
\set C},\allowbreak \distance(z,U) < \delta\}$ (see figure \ref{fig:mathcal_D}).

 The observability inequality of the Grushin equation implies that there exists
$C'>0$ and an integer $N$ such that for all polynomials $f(z)=\sum_{n > N} a_n 
z^n$ with at least the $N$ first derivatives vanishing at zero\footnote{This 
condition is not really needed, but it makes some theorems less cumbersome to 
state.}:
\begin{equation}\label{eq:obs_hol}
 |f|_{L^2(D(0,e^{-T}))} \leq C'|f|_{L^\infty(U^\delta)}.
\end{equation}
\end{prop}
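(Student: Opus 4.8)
The plan is to bound the left-hand side of \eqref{eq:obs} from below by a constant times $|f|^2_{L^2(D(0,e^{-T}))}$ and its right-hand side from above by a constant times $|f|^2_{L^\infty(U^\delta)}$; chaining these two bounds with \eqref{eq:obs} then yields \eqref{eq:obs_hol}. Here $f(z)=\sum_{n>N}a_n z^n$, and $N$ will be fixed at the end, large enough that every $n$-asymptotic used below holds and that Theorem~\ref{th:est_default} applies. For the left-hand side: by orthogonality of the monomials $z^n$ on disks centred at the origin (as in the toy model), $|f|^2_{L^2(D(0,e^{-T}))}=\sum_{n>N}|a_n|^2\frac{\pi}{n+1}e^{-2(n+1)T}$. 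Since $\lambda_n=n+\rho_n$ with $\rho_n\to 0$ — indeed $\rho_n=\gamma(n)e^{-n}=O(n^{3/2}e^{-n})$ by Theorem~\ref{th:asymptotic} — we have $e^{-2\lambda_n T}\ge c_0\,e^{-2nT}$; moreover, normalised by $v_n(0)=1$, the eigenfunction $v_n$ stays bounded below on an interval of length $\sim n^{-1/2}$ about $0$, so $|v_n|^2_{L^2(-1,1)}\ge c_1 n^{-1/2}$ for $n$ large (part of the spectral estimates of Section~\ref{sec:spectral}). As $\frac1{n+1}\le\frac1{\sqrt n}$, this gives termwise $\frac{\pi}{n+1}e^{-2(n+1)T}\le C_2\,|v_n|^2_{L^2(-1,1)}e^{-2\lambda_n T}$, hence $|f|^2_{L^2(D(0,e^{-T}))}\le C_2\sum_{n>N}|v_n|^2_{L^2(-1,1)}|a_n|^2 e^{-2\lambda_n T}$.

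For the right-hand side of \eqref{eq:obs}: since $[0,T]\times(-1,1)\times\omega_y$ has finite measure, it is at most a constant times $\sup_{t,x,y}\bigl|\sum a_n v_n(x)e^{n(-t+iy)}e^{-\rho_n t}\bigr|^2$. Put $z=e^{-t+iy}$, which ranges over $\mathcal D=\{e^{-T}<|z|<1,\ \arg z\in\omega_y\}$, so that $e^{-\rho_n t}=|z|^{\rho_n}$; and for fixed $x\in(-1,1)$ set $\zeta=e^{-x^2/2}z$, which lies in $U=\{0<|\zeta|<1,\ \arg\zeta\in\omega_y\}$. A direct computation gives
\[
 a_n v_n(x)e^{n(-t+iy)}e^{-\rho_n t}=a_n\,\gamma_n^{(x,|z|)}\,\zeta^n,\qquad
 \gamma_n^{(x,r)}:=v_n(x)\,e^{nx^2/2}\,r^{\rho_n},
\]
i.e.\ the Gaussian decay $v_n\sim e^{-nx^2/2}$ has been absorbed into the change of variable $z\mapsto\zeta$, leaving the multiplier sequence $(\gamma_n^{(x,r)})_n$, which is a small perturbation of the constant sequence $1$, uniformly in $x\in(-1,1)$ and $r\in[e^{-T},1]$. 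The sharp size and holomorphic-in-$n$ structure of $v_n(x)e^{nx^2/2}$ (as a function of $n$, for each fixed $x$) and of $\rho_n$, provided by the spectral analysis of Section~\ref{sec:spectral} (Theorem~\ref{th:asymptotic} and the surrounding results), are exactly the hypotheses Theorem~\ref{th:est_default} asks of the multiplier sequence, with constants uniform in $(x,r)$; and $U$ — the ``pacman'' $\{0<|z|<1,\ \arg z\in\omega_y\}$ — satisfies the geometric hypothesis of that theorem. Hence there is $C_3$, independent of $x$ and $r$, with
\[
 \Bigl|\sum a_n v_n(x)e^{n(-t+iy)}e^{-\rho_n t}\Bigr|
 =\Bigl|\sum a_n\gamma_n^{(x,|z|)}\zeta^n\Bigr|
 \le\Bigl|\sum a_n\gamma_n^{(x,|z|)}w^n\Bigr|_{L^\infty(U)}
 \le C_3\Bigl|\sum a_n w^n\Bigr|_{L^\infty(U^\delta)}
 =C_3|f|_{L^\infty(U^\delta)},
\]
and taking the supremum over $t,x,y$ bounds the right-hand side of \eqref{eq:obs} by a constant times $|f|^2_{L^\infty(U^\delta)}$.

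Combining the two bounds with \eqref{eq:obs} gives $|f|_{L^2(D(0,e^{-T}))}\le C'|f|_{L^\infty(U^\delta)}$, which is \eqref{eq:obs_hol}. The delicate step is the second one: replacing $v_n(x)e^{-\rho_n t}$ by $e^{-nx^2/2}$ must cost no more than the passage from $L^\infty(U^\delta)$ back to $L^\infty(U)$, which is exactly the content of Theorem~\ref{th:est_default}, and checking that its hypotheses on $(\gamma_n^{(x,r)})_n$ really hold rests on the fine spectral information about $\lambda_n-n$ and $v_n(x)e^{nx^2/2}-1$ proved in Section~\ref{sec:spectral}; obtaining all constants uniform in the auxiliary parameters $x$ and $r=|z|$ is where most of the care will be needed.
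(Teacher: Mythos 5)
Your treatment of the left-hand side is fine and essentially what the paper does (the lower bound $|v_n|^2_{L^2(-1,1)}\gtrsim n^{-1/2}$ is Lemma~\ref{th:lemma2}, the boundedness of $(\rho_n)$ is all that is used from Theorem~\ref{th:asymptotic}, and the crude bound of the RHS of~\eqref{eq:obs} by its supremum times the measure of $[0,T]\times(-1,1)\times\omega_y$ is legitimate).

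The gap is in your handling of the right-hand side, and it is a real one. To absorb the Gaussian you introduce the change of variable $\zeta=e^{-x^2/2}z$ and the multiplier $\gamma_n^{(x,r)}=v_n(x)e^{nx^2/2}r^{\rho_n}$, and you then invoke Theorem~\ref{th:est_default} claiming that the ``sharp size and holomorphic-in-$n$ structure of $v_n(x)e^{nx^2/2}$'' is provided by Section~\ref{sec:spectral}. It is not. Proposition~\ref{th:est_agmon} controls $e^{\alpha(1-\epsilon)x^2/2}v_\alpha(x)$ as a bounded family in $S(r)$ only for $\epsilon>0$ strictly; your multiplier needs the endpoint $\epsilon=0$, where the Agmon weight $\delta^2=1-(1-\epsilon)^2$ in the energy identity~\eqref{eq:agmon_est} vanishes and the paper's proof collapses. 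For complex $\alpha\in U_{\theta,r(\theta)}$ the bound $|v_\alpha(x)|\lesssim e^{-(1-\epsilon)\Re(\alpha)x^2/2}$ is all that is known, so $|v_\alpha(x)e^{\alpha x^2/2}|$ can a priori be as large as $e^{\epsilon\Re(\alpha)x^2/2}$, i.e.\ genuinely exponential in $\alpha$; this violates the sub-exponential growth required of a symbol in $S(r)$, and nothing in the paper gives you uniformity of Theorem~\ref{th:est_default}'s constant over $x\in(-1,1)$ for that family. The paper avoids this entirely by \emph{not} making the change of variable $z\mapsto\zeta$: in Lemma~\ref{th:lemma1} the symbol is $\gamma_{\zeta,x}(\alpha)=v_\alpha(x)\,|\zeta|^{\rho(\alpha)}$ (no $e^{\alpha x^2/2}$ factor), which corresponds to the trivial case $\epsilon=1$ of Proposition~\ref{th:est_agmon}; the symbol is allowed to \emph{decay} like $e^{-\Re(\alpha)x^2/2}$, which costs nothing in $S(r)$, and the holomorphy-default estimate is then applied to $\sum a_n z^n$ on the fixed domain $U$ with $z\in\mathcal D\subset U$. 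A smaller issue is that $U$ is not star-shaped with respect to $0$ (since $0\notin U$), so Theorem~\ref{th:est_default} cannot be applied to $U$ directly as you claim; the paper replaces $U$ by a nearby star-shaped $V$ with $U\subset V$ and $V^{\delta'}\subset U^\delta$. If you want to keep your Gaussian absorption, you would have to change variable by $e^{-(1-\epsilon_0)x^2/2}z$ for a fixed small $\epsilon_0>0$ and use Proposition~\ref{th:est_agmon} with $\epsilon=\epsilon_0$; but as written, with $\zeta=e^{-x^2/2}z$, the argument relies on an unproved sharp Agmon estimate.
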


\begin{proof}
About the left hand side of the observability inequality \eqref{eq:obs}, we remark 
that it is almost the same as in the toy model. Indeed, if $a_0 = 0$, we have seen in 
the proof of the non null controllability of the toy model that 
$\int_{D(0,e^{-T})} |\sum a_n z^{n-1}|^2\diff \lambda(z) = \sum 
\frac\pi n |a_n|^2 e^{-2nT}$. And since $|v_n|_{L^2(-1,1)}^2$ is greater than 
$cn^{-1/2}$ for some $c>0$ (see Lemma \ref{th:lemma2} for a proof), we have: 
\begin{equation*}
\int_{D(0,e^{-T})} \left |\sum a_n z^{n-1}\right|^2 \diff \lambda(z)\leq \pi c^{-1} 
\sum |v_n|_{L^2(-1,1)}^2 |a_n|^2 e^{-2nT}.
\end{equation*}

Moreover, reminding that $\lambda_n = n+\rho_n$, we know that $(\rho_n)$ is 
bounded (see Theorem \ref{th:asymptotic} or 
\cite[][section 3.3]{beauchard_null_2014-1} for a simpler proof). 
So, 
$e^{-2nT} \leq e^{2\sup_k(\rho_k)T}e^{-2\lambda_nT}$. We use that to bound the right 
hand-side of the previous inequality:
\begin{equation}\label{eq:obs_hol_lhs}
\int_{D(0,e^{-T})} \left|\sum a_n z^{n-1}\right|^2 \diff \lambda(z) \leq 
\pi c^{-1} 
e^{2\sup_k(\rho_k) T}\sum|v_n|^2_{L^2(-1,1)}|a_n|^2 e^{-2\lambda_n T}.
\end{equation}

We now want to bound from 
above the right hand side of the observability inequality \eqref{eq:obs} by 
$C'\left|\sum a_n z^{n-1}\right|_{L^\infty(U^\delta)}^2$ for some $C'$. We 
make the change of variables $z = e^{-t+iy}$:
\begin{multline}\label{eq:obs_hol_rhs}
 \int_{\substack{t\in[0,T]\\ x\in (-1,1)\\y\in \omega_y}} 
  \left|\sum a_n v_n(x) e^{n(-t+iy)}e^{-\rho_nt}\right|^2\diff t\diff y\diff x 
=\\[-0.5em]
 \int_{x\in(-1,1)}\left( \int_{z\in \mathcal D} 
  \left|\sum a_n v_n(x) z^{n-1}|z|^{\rho_n}\right|^2\diff\lambda(z) 
  \right)\diff x.
\end{multline}

As in the case of the Grushin equation over $\set R\times \set 
T$ studied in the 
previous section, there is a multiplication by $v_n(x)$. But this time, the action 
of this multiplication
is a little more complicated than just a contraction by a factor $e^{-x^2/2}$. The 
other difficulty is the factor $e^{-\rho_n t} = |z|^{\rho_n}$, which does not seem 
to be a big issue at a first glance, as it is close to $1$; but since it is not 
holomorphic, it is 
actually the biggest issue we are facing. To be able adapt the method used 
in the previous cases, we need to somehow estimate the sum $|\sum 
v_n(x) |z|^{\rho_n} a_n z^n|_{L^2(\mathcal D)}$ by an appropriate norm of $\sum a_n 
z^n$. The Theorem \ref{th:est_default} hinted in the outline gives us such an 
estimate, with the spectral analysis of Section \ref{sec:spectral} giving us the 
required hypotheses. More precisely, we prove in Section \ref{sec:proof_lemma} the 
following lemma:
\begin{lemma}\label{th:lemma1}
There exists an integer $N$ and $C_2>0$ such that for every $x\in (-1,1)$, $z$ and 
$\zeta$ in $\mathcal D$, and every polynomial $\sum_{n>N} a_n z^n$ with derivatives 
up to order $N$ vanishing at $0$:
\[\left|\sum v_n(x) a_n z^{n-1} |\zeta|^{\rho_n}\right|\leq C_2 \left|\sum a_n 
z^{n-1}\right|_{L^\infty(U^\delta)}.\]
\end{lemma}

Applying the above lemma for $z= \zeta$, and assuming that $a_n = 0$ when 
$n\leq N$, we have for every $z\in \mathcal D$: \[\left|\sum v_n(x) a_n 
z^{n-1}|z|^{\rho_n}\right|\leq C_2 \left|\sum a_n 
z^{n-1}\right|_{L^\infty(U^\delta)}\]
so, the right hand side of the observability 
inequality satisfies:
\begin{align*}
&\,\quad\int_{x\in(-1,1)}\left( \int_{\substack{t\in[0,T]\\y\in\omega_y}} 
  \left|\sum a_n v_n(x) e^{n(-t+iy)}e^{-\rho_nt}\right|^2\diff t\diff y
  \right)\diff x \\*
 &=\int_{x\in(-1,1)}\left( \int_{z\in \mathcal D} 
  \left|\sum a_n v_n(x) z^{n-1}|z|^{\rho_n}\right|^2\diff\lambda(z) 
  \right)\diff x &\mkern-50mu\text{(equation \eqref{eq:obs_hol_rhs})}\\
 &\leq \int_{x\in(-1,1)}\left( \int_{z\in \mathcal D} 
  C^2_2\left|\sum 
a_{n+1}z^n\right|_{L^\infty(U^\delta)}^2\diff\lambda(z)\right)\diff x
  &\mkern-50mu\text{(previous lemma)}\\
  &\leq 2C_2^2 \pi\left|\sum a_{n+1}z^n\right|_{L^\infty(U^\delta)}^2
  &\mkern-50mu(\text{area}(\mathcal D) \leq \pi).
\end{align*}

So, together with equation \eqref{eq:obs_hol_lhs} on the left hand side of the 
observability inequality, we have proved that the observability inequality implies 
that for all polynomials $f = \sum_{n\geq N} a_n z^{n-1}$:
\[|f|_{L^2(D(0,e^{-T}))}^2 \leq 2\pi^2e^{2\sup_k(\rho_k)T} c^{-1}CC_2^2 
|f|_{L^\infty(U^\delta)}^2.\qedhere\]
\end{proof}

\begin{figure}
\begin{minipage}[c]{0.6\textwidth}
\begin{center}
   \begin{tikzpicture}[scale=0.8]
  \fill[red, fill opacity = 0.7] (0,0) circle[radius=1.4];
  \draw[fill = yellow, fill opacity = 0.6] (45:0.3) -- 
   ($(-10:3.5)+(80:0.3)$)
   arc[start angle = 80, end angle = -10, radius = 0.3]
   arc[start angle = -10, delta angle = -250, radius = 3.8]
   arc[start angle = 100, end angle = 10, radius = 0.3] --
   cycle;
  \draw[thick] (100:3.5) arc[radius = 3.5, start angle=100, delta angle=250];
  \draw[->] (130:4.3) node[above left]{$\omega_y$} -- (130:3.5);
  \draw[->] (-4,0) -- (4,0);
  \draw[->] (0,-4) -- (0,4);
  
  \draw[blue,thick] (45:0.7) -- (45:4.5) node[pos = 0.8,right]{$z_0[1,+\infty)$};
  \draw[blue] (45:0.7) \cross{0.2} +(0,0.1)node[above]{$z_0$};
  \draw[->,thick, blue] (0:1.2) arc [radius = 1.2, start angle=0, delta angle=45];
  \node[right, blue] at (25:1.2) {$\theta$};
  \path (1.6,-1.6) node{$U^\delta$};
  \draw[->] (-1.3,-1.2) node[below]{$D(0,e^{-T})$} -- (-0.4,-0.4);
  \draw[{Stealth[length = 3mm, reversed]}-{Stealth[length = 3mm, reversed]}] 
($(160:3.5)- (160:3mm)$) -- ($(160:3.8) + (160:3mm)$);
  \draw (160:3.8) -- (160:4.5) node[above right = -0.6ex and 0.1ex]{$\delta$};
\end{tikzpicture}
\end{center}
\end{minipage}\hfill%
\begin{minipage}[c]{0.4\textwidth}
\caption{In yellow, the domain $U^\delta$, in red, the disk 
$D(0,e^{-T})$ and in blue, the point $z_0$ and the half-line 
$z_0[1,+\infty)$. Since $f_k$ converges to $z\mapsto 
z^{N+1}(z-z_0)^{-1}$ away from the blue line, the $L^\infty$ norm of $f_k$ over 
$U^\delta$ is bounded, as long as $\delta<\distance(z_0,\mathcal 
D)$.}\label{fig:mathcal_D}
\end{minipage}
\end{figure}
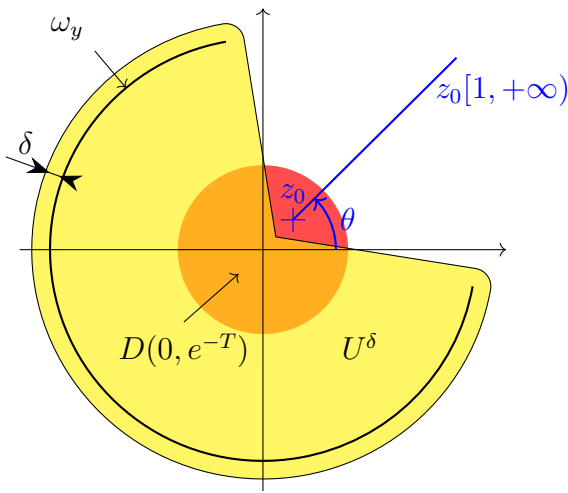

We can find an counterexample of the inequality of the previous proposition exactly 
in the same way we disproved the null controllability of the Grushin equation over 
$\set R\times \set T$:
\begin{proof}[Proof of Theorem \ref{th:main}]
  First we choose $0<\delta<e^{-T}$, so that $D(0,e^{-T})\not \subset U^\delta$.
We also choose $\theta$ non adherent to $\omega_y$, and $z_0 = re^{i\theta}$ with 
$r\in 
(\delta, e^{-T})$ (so that $z_0 \in D(0,e^{-T})$ but $z_0\notin 
\overline{U^\delta}$, see figure \ref{fig:mathcal_D}). Then we choose $\tilde f_k$ a 
sequence of polynomials that converges uniformly on every compact subset of $\set C 
\setminus z_0[1,+\infty)$ to $z\mapsto (z-z_0)^{-1}$. Finally, to satisfy the 
condition of ``enough vanishing derivatives at $0$'' of the previous proposition, we 
chose $f_k(z) = z^{N+1} \tilde f_k(z)$ with $N$ given by the previous proposition. 
This sequence tends to $z\mapsto z^{N+1}(z-z_0)^{-1}$.

Then, again by Fatou's lemma, $|f_k|_{L^2(D(0,e^{-T}))} \to +\infty$ as $k\to 
+\infty$, and since $z\mapsto z^{N+1}(z-z_0)^{-1}$ is bounded on $U^\delta$, $f_k$ 
is uniformly bounded on $U^\delta$. Therefore, the inequality 
$|f_k|_{L^2(D(0,e^{-T}))} \leq C |f_k|_{L^\infty(U^\delta)}$ is false for $k$ 
large enough, and 
according to the previous theorem, so is the observability inequality.
\end{proof}

\section{Estimates for the holomorphy default operators}\label{sec:est_def_op}
\subsection{Symbols}
In this section and the following, we study some operators on 
polynomials of the form $\sum a_n z^n \mapsto \sum \gamma_n a_n z^n$. Since these 
operators make the link between the holomorphy of the solution of the toy model (in 
the variable $z = e^{-t+iy}$) and the solutions of the real Grushin equation (see 
Lemma \ref{th:lemma1}), we will call them \emph{holomorphy default operators}. We 
will also call the sequence $(\gamma_n)$ the \emph{symbol} of the operator.

Our main goal is the proof of some estimates on those holomorphy default 
operators, in the form of Theorem \ref{th:est_default}. As a first step, we define 
the space of symbols we are interested in, and prove some simple facts about this 
space.

\begin{definition}\label{def:symbols}
  Let $r:(0,\pi/2)\to\set R_+$ be a non-decreasing function, and for $\theta$ 
in $(0,\pi/2)$, let $U_{\theta,r(\theta)} = \{|z|>r(\theta), 
|\arg(z)|<\theta\}$ (see Figure \ref{fig:U_r_theta}). We note $S(r)$ the set 
of functions 
$\gamma$ from the union of the $U_{\theta,r(\theta)}$ to $\set C$ 
which are holomorphic and have sub-exponential growth on each 
$U_{\theta,r(\theta)}$, 
i.e. for each $\theta\in (0,\pi/2)$ and $\epsilon>0$, we have 
$p_{\theta,\epsilon}(\gamma) := \sup_{z\in 
U_{\theta,r(\theta)}} |\gamma(z)e^{-\epsilon|z|}| < +\infty$.
We endow $S(r)$ with the topology defined by the seminorms $p_{\theta,\epsilon}$ 
for all $\theta\in(0,\pi/2)$ and $\epsilon>0$.
\end{definition}

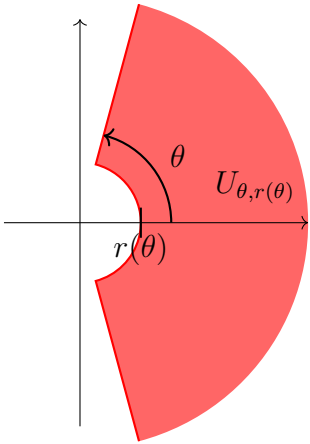
\begin{figure}
\begin{minipage}[c]{0.4\textwidth}
 \begin{center}
  \begin{tikzpicture}[baseline=(a),scale=1]
  \node (a) at (0,0) {};
  \path[fill=red!60] (75:3) -- (75:0.8)
    arc[radius=0.8, start angle= 75, end angle = -75] -- (-75:3)
    arc[radius=3, start angle=-75, end angle=75];
  \draw[red, thick] (75:3) -- (75:0.8)
    arc[radius=0.8, start angle= 75, end angle = -75] -- (-75:3);
  \node at (16:1.7) [right]{$U_{\theta,r(\theta)}$};
  \draw[->](-1,0) -- (3,0);
  \draw[->](0,-2.7) -- (0,2.7);
  \draw[thick] (0.8,-0.2) -- +(0,0.4);
  \draw[->, thick] (1.2,0) arc[start angle = 0, end angle = 75, radius = 1.2];
  \node at (30:1.2)[above right] {$\theta$};
  \node at (0.8,0)[below] {$r(\theta)$};
\end{tikzpicture}
 \end{center}
 \end{minipage}\hfill%
 \begin{minipage}[c]{0.6\textwidth}
\caption{An example of a set $U_{\theta,r(\theta)}$, whose union for 
$0<\theta<\pi/2$ is the domain of definition of functions in $S(r)$. The angle 
$\theta$ is allowed to be arbitrarily close to $\pi/2$, but then, the radius 
$r(\theta)$ of the disk we avoid can grow arbitrarily fast.}\label{fig:U_r_theta}
\end{minipage}
\end{figure}

From now on, when we write $S(r)$, it is implicitly assumed that $r$ is a 
non-decreasing function for $(0,\pi/2)$ to $\set R_+$.

\begin{ex}
  \begin{itemize}
    \item Every bounded holomorphic function on the half plane $\{\Re(z)\geq 0\}$ is in $S(0)$. For instance, $z\mapsto e^{-z}$ is in $S(0)$.
    \item Every polynomial is in $S(0)$.
    \item For all $s>0$, $z\mapsto z^s$ is in $S(0)$.
    \item More generally, if $\gamma$ is holomorphic on every domain $U_{\theta,r(\theta)}$ and has at most polynomial growth on those domains, $\gamma$ is in $S(r)$.
  \end{itemize}
\end{ex}

\begin{remark}
 The only values of a symbol $\gamma \in S(r)$ we are actually interested in are the 
values $\gamma(n)$ at the integers; the other values do not appear in the 
operator $H_\gamma: \sum a_n z^n \mapsto \sum \gamma(n) a_n z^n$. However, the 
holomorphic hypothesis, and hence the other values of $\gamma$, is quite essential 
for the proof of the estimate in Theorem~\ref{th:est_default}. It mainly appears to 
justify a change of integration path in the integral $\hat \gamma(\zeta) = 
\int_{0}^{+\infty} \gamma(x) e^{-ix\zeta}\diff x$ (see Propositions 
\ref{th:fourier_transform} and \ref{th:est_fourier}).

Even if they do not seem to play any role in the operator $H_\gamma$, the very fact 
that the values of $\gamma$ at non-integers exist impose some structure to the 
values $\gamma(n)$ at the integers. A structure we unfortunately have not been able 
to express in a different, more manageable way.
\end{remark}

Let us remind that if $\Omega$ is an open subset of $\set C$, $\mathcal O(\Omega)$ 
is 
the space of holomorphic functions in $\Omega$ with the topology of uniform 
convergence in every compact subset of $\Omega$. 

\begin{prop}\label{th:prop_symbols}The space $S(r)$ enjoys the following properties:
  \begin{itemize}
    \item The topology of $S(r)$ is stronger than the topology of uniform 
convergence on every compact.
    \item For all compact $K$ of $\bigcup U_{\theta,r(\theta)}$, and all $j\in \set N$, the seminorm $\gamma \mapsto |\gamma^{(j)}|_{L^\infty(K)}$ is continuous on $S(r)$.
    \item For all $z_0$ in the domain of definition of $\gamma$, the punctual evaluation at $z_0$, i.e. $\gamma\mapsto \gamma(z_0)$, is continuous on $S(r)$.
    \item The application $(\gamma_1,\gamma_2)\mapsto \gamma_1 \gamma_2$ is 
continuous from $S(r)\times S(r)$ to $S(r)$.
  \end{itemize}
\end{prop}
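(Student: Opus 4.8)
The statement collects four elementary facts about the topology of $S(r)$, and I would prove them in the order listed, since each builds on the previous one. The guiding principle is that every seminorm in sight can be dominated by one of the defining seminorms $p_{\theta,\epsilon}$ after an application of the Cauchy integral formula, which trades uniform control of $\gamma$ on a slightly larger set for control of derivatives (or of $\gamma$ itself) on a smaller set.

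\textbf{First item.} Let $K$ be a compact subset of $\bigcup_\theta U_{\theta,r(\theta)}$. By compactness $K$ is covered by finitely many of the $U_{\theta,r(\theta)}$, and since these are nested (as $r$ is non-decreasing, $\theta\mapsto U_{\theta,r(\theta)}$ is increasing for the inclusion once one checks the two monotonicities balance out — in any case a finite union of them is again contained in a single $U_{\theta_0,r(\theta_0)}$ for $\theta_0$ close enough to $\pi/2$). On that fixed $U_{\theta_0,r(\theta_0)}$ the set $K$ is bounded, so $|\gamma|_{L^\infty(K)}\leq e^{\epsilon R}\,p_{\theta_0,\epsilon}(\gamma)$ where $R=\sup_{z\in K}|z|$ and $\epsilon$ is any fixed positive number. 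Hence convergence in $S(r)$ implies uniform convergence on $K$.

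\textbf{Second item.} Fix $K$ and $j$. Choose $\rho>0$ so small that the compact neighbourhood $K_\rho=\{z:\distance(z,K)\leq\rho\}$ is still contained in some single $U_{\theta_0,r(\theta_0)}$ (possible by the same covering argument, shrinking $\rho$ if necessary). Cauchy's estimate for the $j$-th derivative gives, for each $z_0\in K$,
\[
|\gamma^{(j)}(z_0)|=\left|\frac{j!}{2\pi i}\int_{|z-z_0|=\rho}\frac{\gamma(z)}{(z-z_0)^{j+1}}\,dz\right|\leq \frac{j!}{\rho^{j}}\,|\gamma|_{L^\infty(K_\rho)},
\]
and the right-hand side is bounded by $\frac{j!}{\rho^j}e^{\epsilon R'}p_{\theta_0,\epsilon}(\gamma)$ with $R'=\sup_{z\in K_\rho}|z|$, which proves continuity of $\gamma\mapsto|\gamma^{(j)}|_{L^\infty(K)}$.

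\textbf{Third and fourth items.} The third item is the special case $j=0$, $K=\{z_0\}$ of the second. For the fourth item, fix $\theta\in(0,\pi/2)$ and $\epsilon>0$; split $\epsilon=\epsilon_1+\epsilon_2$ with $\epsilon_i>0$. Then for $z\in U_{\theta,r(\theta)}$,
\[
|\gamma_1(z)\gamma_2(z)e^{-\epsilon|z|}|=|\gamma_1(z)e^{-\epsilon_1|z|}|\cdot|\gamma_2(z)e^{-\epsilon_2|z|}|\leq p_{\theta,\epsilon_1}(\gamma_1)\,p_{\theta,\epsilon_2}(\gamma_2),
\]
so $p_{\theta,\epsilon}(\gamma_1\gamma_2)\leq p_{\theta,\epsilon_1}(\gamma_1)\,p_{\theta,\epsilon_2}(\gamma_2)$; since the product $\gamma_1\gamma_2$ is visibly holomorphic on each $U_{\theta,r(\theta)}$ and this bound shows it has sub-exponential growth there, $\gamma_1\gamma_2\in S(r)$, and the displayed submultiplicativity of the seminorms gives joint continuity of the product map.

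\textbf{Main obstacle.} None of the steps is deep; the only point requiring a little care is the repeatedly used claim that a compact subset (or a compact $\rho$-neighbourhood of one) of $\bigcup_\theta U_{\theta,r(\theta)}$ sits inside a single $U_{\theta_0,r(\theta_0)}$. I would isolate this as a preliminary remark: a compact $K$ meets finitely many $U_{\theta,r(\theta)}$ needed to cover it, and one checks directly that for $\theta'\geq\theta$ one has $U_{\theta,r(\theta)}\cap\{|z|\geq r(\theta')\}\subset U_{\theta',r(\theta')}$, so a large enough angle absorbs all of them on the bounded set $K$ — the only subtlety being that $r(\theta')$ may be larger than $r(\theta)$, which is harmless since $K$ avoids a neighbourhood of the origin anyway (it is a compact subset of a set not containing $0$... unless $r(\theta)=0$ for all $\theta$, in which case $0$ is still not in the domain and the same boundedness-away-from-$0$ holds). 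Once this bookkeeping is dispatched, the four assertions follow from the Cauchy formula and the exponent-splitting trick above.
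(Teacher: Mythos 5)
Your core tools (Cauchy's formula for item 2, exponent splitting for item 4) are the right ones and match the paper's, but there is a genuine error in your treatment of the compactness step, which you yourself flagged as the main obstacle and then resolved incorrectly.

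You claim that a compact $K\subset\bigcup_\theta U_{\theta,r(\theta)}$ sits inside a \emph{single} $U_{\theta_0,r(\theta_0)}$ for $\theta_0$ close enough to $\pi/2$. This is false. As $\theta$ grows, the sector widens but the excluded disk of radius $r(\theta)$ also grows, and the two effects do \emph{not} nest. Concretely, take $r(\theta)=0$ for $\theta\leq\pi/4$ and $r(\theta)=10$ for $\theta>\pi/4$, and let $K=\{1,\,11e^{i\pi/3}\}$. Both points lie in the union, but any $\theta_0$ that beats $\pi/3$ forces $r(\theta_0)=10$, which excludes the point $1$. Your remark that ``$K$ avoids a neighbourhood of the origin anyway'' does not save you: what you would need is that $K$ avoids the disk $D(0,r(\theta_0))$ for the $\theta_0$ you picked, and that is exactly what fails. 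The inclusion $U_{\theta,r(\theta)}\cap\{|z|\geq r(\theta')\}\subset U_{\theta',r(\theta')}$ that you invoke is true but carries no information about the low-modulus part of $K$.

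The fix is what the paper does, and it costs nothing: cover $K$ by finitely many $U_{\theta_1,r(\theta_1)},\dots,U_{\theta_k,r(\theta_k)}$ and bound $|\gamma|_{L^\infty(K)}$ by $e^R\max_{1\leq j\leq k}p_{\theta_j,1}(\gamma)$ with $R=\sup_K|z|$, i.e.\ dominate by a finite \emph{maximum} of defining seminorms rather than by one of them. (This is still a continuous seminorm, which is all that is needed.) The same correction applies to your item 2, where $K_\rho$ again may fail to sit in a single sector; the Cauchy estimate then goes through unchanged once the right-hand side is understood as a finite max of seminorms, or, as in the paper, simply as a consequence of item 1 via the standard continuity of $f\mapsto|f^{(j)}|_{L^\infty(K)}$ on $\mathcal O(\Omega)$. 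Items 3 and 4 are correct and identical to the paper's.
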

\begin{proof}
\begin{itemize}
  \item Let $K$ be a compact subset of $\bigcup_{0<\theta<\pi/2} 
U_{\theta,r(\theta)}$. By the Borel-Lebesgue property, there is a finite number of 
$\theta$ in $(0,\pi/2)$, say $\theta_1,\dotsc,\theta_k$ such that $K\subset 
\bigcup_{j=1}^k U_{\theta_k,r(\theta_k)}$. By noting $R=\sup_{z\in K}|z|$, we then 
have $|u|_{L^\infty(K)} \leq \sup_{1\leq j\leq k} p_{\theta_j,1}(\gamma)e^R$. This 
proves the first fact.
  
  \item We remind that if $\Omega$ is an open subset of $\set C$, $j$ is a natural number and $K$ a compact subset of $\Omega$ then Cauchy's integral formula implies that the seminorm on $\mathcal O(\Omega)$: $f\mapsto |f^{(j)}|_{L^\infty(K)}$ is continuous. Thus, the second point is a consequence of the first.
  
  \item Since $\{z_0\}$ is compact, the third point is a direct consequence of the second point (or the first).
  
  \item In order to prove the fourth point, we write for $z\in U_{\theta,r(\theta)}$: 
$|\gamma_1(z)\gamma_2(z)| \leq 
p_{\theta,\epsilon/2}(\gamma_1)p_{\theta,\epsilon/2}(\gamma_2) e^{\epsilon|z|}$, so 
$p_{\theta,\epsilon}(\gamma_1\gamma_2)\leq 
p_{\theta,\epsilon/2}(\gamma_1)p_{\theta,\epsilon/2}(\gamma_2)$.\qedhere
\end{itemize}
\end{proof}

\begin{prop}\label{th:change_domain}We have the following continuous injections 
between spaces $S(r)$:
  \begin{itemize}
    \item If $r'\geq r$, then denoting $U' = \bigcup U_{\theta,r'(\theta)}$, the restriction map $\gamma\in S(r) \mapsto \gamma_{|U'} \in S(r')$ is continuous.
    \item Let $\theta_0$ in $(0,\pi/2)$ and $a>r(\theta_0)$. Define $r'(\theta)$ by 
$r'(\theta) = 0$ if $|\theta|< \theta_0$ and $r'(\theta) = r(\theta)$ otherwise. Then 
$\gamma\in S(r)\mapsto \gamma(\cdot+a) \in S(r')$ is continuous.
  \end{itemize}
\end{prop}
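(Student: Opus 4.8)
The plan is to verify each of the two continuous injections directly from the definition of the topologies, that is, by bounding each seminorm $p_{\theta,\epsilon}$ of the target space by a finite combination of seminorms of the source space. Throughout I would use that a holomorphic function restricted to a subdomain, or precomposed with a translation, remains holomorphic with sub-exponential growth, so in each case the real content is the continuity estimate, not the well-definedness.

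For the first item, suppose $r'\geq r$, so that $U'=\bigcup_\theta U_{\theta,r'(\theta)}$ is contained in $\bigcup_\theta U_{\theta,r(\theta)}$, and the restriction $\gamma_{|U'}$ makes sense and is holomorphic on each $U_{\theta,r'(\theta)}$. Fix $\theta\in(0,\pi/2)$ and $\epsilon>0$. Since $U_{\theta,r'(\theta)}\subset U_{\theta,r(\theta)}$, we have
\[
p_{\theta,\epsilon}(\gamma_{|U'}) = \sup_{z\in U_{\theta,r'(\theta)}}|\gamma(z)e^{-\epsilon|z|}| \leq \sup_{z\in U_{\theta,r(\theta)}}|\gamma(z)e^{-\epsilon|z|}| = p_{\theta,\epsilon}(\gamma),
\]
which is exactly the continuity (indeed, it is $1$-Lipschitz for the respective seminorms). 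This also shows $\gamma_{|U'}\in S(r')$.

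For the second item, fix $\theta_0\in(0,\pi/2)$ and $a>r(\theta_0)$, and define $r'$ as in the statement. I would first check that $z\mapsto \gamma(z+a)$ is well-defined and holomorphic on each $U_{\theta,r'(\theta)}$: for $|\theta|<\theta_0$ this requires that $z+a\in\bigcup_\varphi U_{\varphi,r(\varphi)}$ whenever $|\arg z|<\theta$ and $|z|>0$, which is the key geometric point — shifting the sector $U_{\theta,0}$ to the right by $a$ lands it inside $U_{\theta_0,r(\theta_0)}$ because $a>r(\theta_0)$ pushes the vertex past the excluded disk while the shift only decreases the argument; for $|\theta|\geq\theta_0$ one lands inside $U_{\theta,r(\theta)}$ by monotonicity of $r$ (here one may need to shrink the translated sector's angle slightly, or simply observe $U_{\theta,r'(\theta)}+a\subset U_{\theta,r(\theta)}$ since $r'(\theta)=r(\theta)$ and translating right keeps points in the half-sector). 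Granting this, for the seminorm estimate fix $\theta$ and $\epsilon$: writing $w=z+a$, one has $|z|\leq |w|\leq |z|+a$, hence $e^{-\epsilon|z|}\leq e^{\epsilon a}e^{-\epsilon|w|}$, and $w$ ranges in some $U_{\theta',r(\theta')}$ with $\theta'$ depending only on $\theta,\theta_0$ (one may take $\theta'=\max(\theta,\theta_0)$ after the geometric step above). Therefore
\[
p_{\theta,\epsilon}\bigl(\gamma(\cdot+a)\bigr) = \sup_{z\in U_{\theta,r'(\theta)}}|\gamma(z+a)e^{-\epsilon|z|}| \leq e^{\epsilon a}\sup_{w\in U_{\theta',r(\theta')}}|\gamma(w)e^{-\epsilon|w|}| = e^{\epsilon a}\, p_{\theta',\epsilon}(\gamma),
\]
establishing continuity and simultaneously $\gamma(\cdot+a)\in S(r')$.

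The main obstacle is the geometric inclusion in the second item, namely confirming that the translated sector $\{|\arg z|<\theta,\ |z|>0\}+a$ really sits inside $\bigcup_\varphi U_{\varphi,r(\varphi)}$, and pinning down the correct target angle $\theta'$; once the picture is set up correctly (the translation by $a>r(\theta_0)$ clears the excluded disk of radius $r(\theta_0)$, and rightward translation never increases $|\arg|$) the seminorm bounds are immediate from $|z|\leq|z+a|\leq|z|+a$.
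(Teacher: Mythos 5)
Your proof is correct and takes essentially the same approach as the paper: the first item is the same one-line monotonicity of seminorms, and the second item hinges on the identical inequality $|\gamma(z+a)e^{-\epsilon|z|}|\le e^{\epsilon a}|\gamma(z+a)e^{-\epsilon|z+a|}|$. You additionally carry out the geometric inclusion (that the translated sector lands inside $U_{\max(\theta,\theta_0),\,r(\max(\theta,\theta_0))}$) which the paper only indicates by a figure and explicitly leaves to the reader.
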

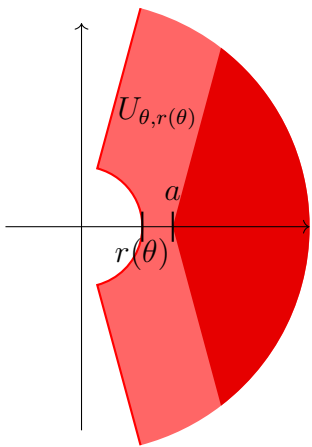
\begin{figure}
\begin{minipage}[c]{0.4\textwidth}
 \centering
 \begin{tikzpicture}[baseline=(a),scale=1]
  \path[fill=red!60] (75:3) -- (75:0.8)
    arc[radius=0.8, start angle= 75, end angle = -75] -- (-75:3)
    arc[radius=3, start angle=-75, end angle=75];
  \draw[red, thick] (75:3) -- (75:0.8)
    arc[radius=0.8, start angle= 75, end angle = -75] -- (-75:3);
  \node at (1,1.5){$U_{\theta,r(\theta)}$};  
  \path[fill = red!90!black] (52.3:3) -- (1.2,0) -- (-52.3:3) arc[start angle = 
-52.3, end angle = 52.3, radius = 3]; 
  \draw[->](-1,0) -- (3,0);
  \draw[->](0,-2.7) -- (0,2.7);
  \draw[thick] (0.8,-0.2) -- +(0,0.4) node[pos = 0.5, below]{$r(\theta)$};
  \draw[thick] (1.2,-0.2) -- +(0,0.4) node[above]{$a$};
\end{tikzpicture}
 \end{minipage}\hfill%
 \begin{minipage}[c]{0.6\textwidth}
 \caption{If $a> r(\theta)$, then the set $\{a+z,\break 
|\arg(z)|<\theta\}$ (in 
darker red) is a subset of $U_{\theta,r(\pi)}$.}\label{fig:change_domain}
\end{minipage}
\end{figure}

\begin{proof}
  \begin{itemize}
    \item For readability, let us write $U_\theta = U_{\theta,r(\theta)}$ and 
$U'_\theta = U_{\theta,r'(\theta)}$. To prove the first point, we simply remark that 
$r'\geq r$ implies $U'_{\theta}\subset U_{\theta}$, so, we have: 
$|\gamma(z)e^{-\epsilon|z|}|_{L^\infty(U'_\theta)} \leq 
|\gamma(z)e^{-\epsilon|z|}|_{L^\infty(U_\theta)}$.
    
    \item Looking at figure \ref{fig:change_domain} should convince us that it 
makes sense when looking at the domain of definition (we let the careful reader check 
it formally). The continuity is a consequence of: $|\gamma(z+a)e^{-\epsilon |z|}| 
\leq e^{\epsilon a}|\gamma(z+a)e^{-\epsilon|z+a|}|$.\qedhere
  \end{itemize}
\end{proof}

\subsection{Fourier transform of a symbol and convolution kernel}

The proof of the main estimate on holomorphy default operators relies on Poisson's 
summation formula applied to the sum $\sum \gamma(n) 
z^n$. In order to do that, we need some information on the Fourier transform of 
$\gamma$, the first of which being the existence of it.

We suppose in this subsection that for some $\theta_0$ in $(0,\pi/2)$, $r(\theta_0) = 
0$ (so that $r(\theta) = 0$ for $0<\theta\leq \theta_0$). Then we define the 
Fourier transform $\hat \gamma$ of $\gamma$ for $\xi$ with 
negative imaginary part by $\hat\gamma(\xi) = \int_{0}^{+\infty} \gamma(x) 
e^{-ix\xi}\diff x$. We first prove that this Fourier transform can be 
extended on a bigger domain than the lower half-plane, then, assuming some 
regularity at $0$, we prove an estimate on it.
\begin{prop}\label{th:fourier_transform}
  Let $\gamma$ in $S(r)$. The Fourier transform $\hat \gamma$ of $\gamma$, which is 
holomorphic on  $\{\Im(\xi)<0\}$, can be holomorphically extended on $\set 
C \setminus i\set [0,+\infty)$.
\end{prop}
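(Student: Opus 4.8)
The plan is to start from the defining integral $\hat\gamma(\xi) = \int_0^{+\infty} \gamma(x) e^{-ix\xi}\,\diff x$, which converges for $\Im(\xi)<0$ because $\gamma$ has sub-exponential growth on the positive real axis (more precisely, on $U_{\theta_0,0}$, which contains a neighbourhood of $\set R_+$), and to extend it by rotating the integration contour. For $\alpha \in (-\theta_0,\theta_0)$, consider the ray $\set R_+ e^{i\alpha}$, which lies inside $U_{\theta_0,0}$ where $\gamma$ is holomorphic and sub-exponential, and define $\hat\gamma_\alpha(\xi) = \int_0^{+\infty} \gamma(x e^{i\alpha}) e^{-ix e^{i\alpha}\xi} e^{i\alpha}\,\diff x$. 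This integral converges absolutely as soon as $\Re(i e^{i\alpha}\xi) > -c$ fails to be an obstruction, i.e. whenever $\Im(e^{i\alpha}\xi) < 0$, which describes the open half-plane $H_\alpha = \{\xi : \arg(\xi) \in (-\pi - \alpha, -\alpha)\}$ (rotating the lower half-plane by $-\alpha$). On $H_\alpha$ the function $\hat\gamma_\alpha$ is holomorphic in $\xi$ by the usual theorem on integrals depending holomorphically on a parameter (dominated convergence plus Morera, using the sub-exponential bound $p_{\theta_0,\epsilon}(\gamma)$ for small $\epsilon$ to dominate).

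Next I would check that the various $\hat\gamma_\alpha$ glue together. For two angles $\alpha_1 < \alpha_2$ in $(-\theta_0,\theta_0)$ and $\xi \in H_{\alpha_1}\cap H_{\alpha_2}$, apply Cauchy's theorem to $z \mapsto \gamma(z)e^{-iz\xi}$ on the circular sector bounded by the two rays $\set R_+ e^{i\alpha_1}$ and $\set R_+ e^{i\alpha_2}$ and an arc of radius $R$; the arc contribution tends to $0$ as $R\to\infty$ because on that arc $|\gamma(z)| \le p_{\theta_0,\epsilon}(\gamma) e^{\epsilon R}$ while $|e^{-iz\xi}| = e^{R\,\Im(e^{i\beta}\xi)\cdot(-1)}$... more carefully, $|e^{-iz\xi}| = e^{R\operatorname{Im}(\xi e^{i\beta})}$ decays exponentially uniformly for $\beta$ in the closed subinterval $[\alpha_1,\alpha_2]$ provided $\xi$ lies in the (open) intersection, so choosing $\epsilon$ smaller than the decay rate kills the arc. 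Hence $\hat\gamma_{\alpha_1} = \hat\gamma_{\alpha_2}$ on the overlap, and the union $\bigcup_{\alpha\in(-\theta_0,\theta_0)} H_\alpha$ is a well-defined single holomorphic extension. A direct computation of this union: $H_\alpha$ is the open half-plane whose boundary line has direction $e^{-i\alpha}$, so as $\alpha$ ranges over $(-\theta_0,\theta_0)$ the union of the $H_\alpha$ is $\set C$ minus the closed ray in the direction $\xi$ that is excluded by every $H_\alpha$; a short argument shows this excluded set is exactly $i[0,+\infty)$ when $\theta_0$ can be taken up to but the relevant point is that the union always contains $\set C \setminus i[0,+\infty)$ because a point $\xi = i\rho e^{i\phi}$ with $\rho>0$, $\phi\neq 0$ small, lies in $H_\alpha$ for a suitable small $\alpha$ of the sign of $-\phi$.

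For the last point I would simply verify the union equals $\set C \setminus i[0,+\infty)$: writing $\xi = |\xi|e^{i\psi}$, we have $\xi \in H_\alpha \iff \psi \in (-\pi-\alpha,-\alpha) \pmod{2\pi}$, i.e. $\iff \alpha \in (-\psi-\pi,-\psi)$; such an $\alpha$ exists in $(-\theta_0,\theta_0)$ iff $(-\psi-\pi,-\psi) \cap (-\theta_0,\theta_0) \neq \emptyset$, which fails precisely when $-\psi \le -\theta_0$ and $-\psi-\pi \ge \theta_0$ is impossible, so one works it out to see it fails only for $\psi = \pi/2$, i.e. $\xi \in i(0,+\infty)$; together with the original lower half-plane (which handles $\psi \in (-\pi,0)$) and taking $\alpha$ small we also recover $\psi = 0$ and $\psi$ slightly positive, so the total domain is $\set C\setminus i[0,+\infty)$ (the origin being excluded only because the integral genuinely diverges there, or is simply not claimed). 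The main obstacle is the uniformity in the contour-rotation step: one must be careful that for a \emph{fixed} $\xi$ in the target domain there is a genuine open cone of admissible angles $\alpha$ on which the integral converges absolutely \emph{and} the arc estimate is uniform, so that Cauchy's theorem applies with a quantitative bound on the arc; this is exactly where the sub-exponential growth hypothesis $p_{\theta_0,\epsilon}(\gamma)<\infty$ for \emph{every} $\epsilon>0$ (not just some $\epsilon$) is used, letting us take $\epsilon$ as small as the available exponential decay of $|e^{-iz\xi}|$ on the arc requires.
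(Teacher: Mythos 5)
Your overall strategy—rotate the contour of integration and invoke an analytic continuation by a union of overlapping half-planes—is the right one, and it is the same basic strategy as the paper's. However, there is a genuine gap in how you rotate: you rotate \emph{the whole ray} $\set R_+e^{i\alpha}$ around the origin, which requires the entire ray, including points arbitrarily close to $0$, to lie in the domain of $\gamma$. That is only the case for $|\alpha|<\theta_0$, where $\theta_0$ is the fixed angle with $r(\theta_0)=0$ guaranteed by the standing hypothesis of the subsection. Since $\theta_0$ is a fixed number in $(0,\pi/2)$, not something you are allowed to push toward $\pi/2$, the union $\bigcup_{|\alpha|<\theta_0}H_\alpha$ is a sector $\{\arg(\xi)\in(-\pi-\theta_0,\theta_0)\}$ of opening $\pi+2\theta_0<2\pi$: it omits the whole closed sector $\{\arg(\xi)\in[\theta_0,\pi-\theta_0]\}$, not merely the ray $i[0,+\infty)$. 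So your final paragraph, which computes the union of the $H_\alpha$ assuming $\alpha$ may be chosen anywhere in $(-\pi/2,\pi/2)$, is not compatible with the restriction $|\alpha|<\theta_0$ that your construction imposes.

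The missing idea is the one the paper uses: before turning off at a large angle $\phi$ (any $\phi<\pi/2$), walk out along the real axis to some radius $r_1>r(\theta)$ with $\theta\in(\phi,\pi/2)$, go around the arc of radius $r_1$ from angle $0$ to $-\phi$, and only then head out along the ray $e^{-i\phi}[r_1,+\infty)$. Because $r_1>r(\theta)$, this path avoids the forbidden disk $\{|z|\le r(\theta)\}$ and stays inside $U_{\theta,r(\theta)}$, on which the sub-exponential bound $p_{\theta,\epsilon}(\gamma)<\infty$ still holds. The finite initial segment and the compact arc contribute entire functions of $\xi$; only the outgoing ray imposes a half-plane constraint on $\xi$, and that half-plane is $e^{i\phi}\{\Im(\xi)<0\}$, which you can now push up to $\phi\to\pi/2$. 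Doing the same with the mirror-image path below the real axis gives the other half of the extension, and the union is exactly $\set C\setminus i[0,+\infty)$. Your arc-estimate argument and the Cauchy/Morera glue-up are fine; it is precisely the contour near the origin that needs this fix.
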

\begin{proof}
  Let $\phi$ in $(0,\pi/2)$, let $\theta$ in $(\phi,\pi/2)$ and $r_1>r(\theta)$. We 
make a change of contour in the integral defining $\hat \gamma(\xi)$: let $c$ the 
path $[0,r_1]\cup {\{r_1e^{i\varphi},-\phi\leq \varphi\leq 0\}}\cup e^{-i\phi} 
[r_1,+\infty)$ (see figure \ref{fig:fourier1}). We have for $\xi$ in $\{\Im(\xi)<0\} 
\cap e^{i\phi}\{\Im(\xi)<0\}$:
  \begin{align*}
\hat\gamma(\xi) &=\int_{0}^{+\infty} \gamma(x)e^{-ix\xi}\diff x\\
  &= \int_c \gamma(z)e^{-iz\xi}\diff z\\
  &= \int_{0}^{r_1}\gamma(x)e^{-ix\xi}\diff x +
     \int_{0}^{-\phi} \gamma(r_1 e^{it})e^{-ie^{it}\xi}ir_1 e^{it}\diff t\\
  &\qquad\qquad\qquad   +\int_{r_1}^{+\infty} \gamma(e^{-i\phi}x)e^{-ie^{-i\phi}\xi 
x}e^{-i\phi}\diff x.
\end{align*}

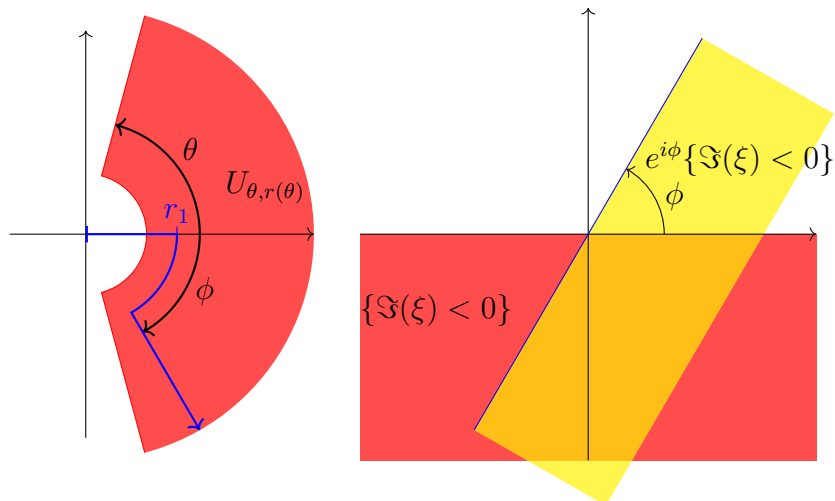
\begin{figure}[hbt]
 \begin{center}
  \begin{tikzpicture}[baseline=(a),scale=1]
  \node (a) at (0,0) {};
  \path[fill=red!70] (75:3) -- (75:0.8)
    arc[radius=0.8, start angle= 75, end angle = -75] -- (-75:3)
    arc[radius=3, start angle=-75, end angle=75];
  \draw[red] (75:3) -- (75:0.8)
    arc[radius=0.8, start angle= 75, end angle = -75] -- (-75:3);
  \node at (20:1.8) [right]{$U_{\theta,r(\theta)}$};
  \draw[->](-1,0) -- (3,0);
  \draw[->](0,-2.7) -- (0,2.7);
  \draw[thick, ->](1.5,0) arc[radius=1.5, start angle = 0, delta angle=-60];
  \node at (-30:1.5) [right]{$\phi$};
  \draw[thick, ->](1.5,0) arc[radius=1.5, start angle = 0, delta angle=+75];
  \node at (45:1.6) [right]{$\theta$};
  \draw[blue,|->, thick] (0.,0) -- (1.2,0)  arc[radius=1.2, start angle= 0, end 
  angle = -60] -- (-60:3);
  \draw[blue] (1.2,0.1) -- ++(0,-0.2);
  \node at (1.2,0)[above,blue] {$r_1$};
\end{tikzpicture}
\begin{tikzpicture}[baseline=(a),scale=1]
  \node (a) at (0,0) {};
  \path[fill=red!100, fill opacity=0.7] (-3,0) rectangle (3,-3);
  \path[fill=yellow!100, fill opacity=0.7, rotate=60] (-3,0) rectangle (3,-2);
  \draw[->](-3,0) -- (3,0);
  \draw[->](0,-3) -- (0,3);
  \draw[blue] (240:3) -- (60:3);
  \draw[->](1,0) arc[radius=1, start angle = 0, delta angle=60];
  \node at (30:1) [right]{$\phi$};
  \path (-2,-1) node{$\{\Im(\xi)<0\}$};
  \path (2,1) node{$e^{i\phi}\{\Im(\xi)<0\}$};
\end{tikzpicture}
 \end{center}
 \caption{In the left figure: in red, a part of the domain of definition of 
$\gamma$, and in blue, an integration path that allows us to extend $\hat 
\gamma$. In the right  figure: in red, the a priori domain of definition of 
$\hat \gamma$, in yellow, the domain we extend $\hat \gamma$ to, when choosing 
the blue integration path of the left figure.}\label{fig:fourier1}
\end{figure}

The first two terms can be extended holomorphically on $\set C$, while the third can be extended holomorphically on $e^{i\phi}\{\Im(\xi)<0\}$. So, taking $\phi\to\pi/2$, $\hat \gamma$ can be extended holomorphically on $\{\Im(\xi)<0\} \cup i\{\Im(\xi)<0\}$. By taking the path $c'$ the symmetric of $c$ with respect to the real line, $\hat \gamma$ can also be extended holomorphically on $\{\Im(\xi)<0\} \cup -i \{\Im(\xi)<0\}$.
\end{proof}

\begin{prop}\label{th:est_fourier}
  Let $\epsilon > 0$. There exists $C>0$ and $\eta >0$ such that for all $\gamma$ in $S(r)$ satisfying $p(\gamma) := \sup_{|z|<1,|\arg(z)|<\theta_0} \frac{|\gamma(z)|}{|z|} < +\infty$ and for all $\xi$ in $\{-ire^{i\theta}, r>\epsilon, |\theta|<2\theta_0\}$ (see figure \ref{fig:fourier}):
  \[
    |\hat\gamma(\xi)| \leq C(p(\gamma)+p_{\theta_0,\eta}(\gamma))|\xi|^{-2}.
  \]
\end{prop}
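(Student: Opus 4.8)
The plan is to evaluate $\hat\gamma(\xi)$ along a suitably rotated half-line, along which the exponential factor decays at a rate proportional to $|\xi|$, and then to bound the resulting integral by two elementary pieces. Write $\xi=-ire^{i\theta}$ with $r=|\xi|>\epsilon$; by symmetry I assume $\theta\in[0,2\theta_0)$ (for $\theta<0$ one rotates the contour upward rather than downward, the argument being mirror-symmetric). The first task is to pick an angle $\phi=\phi(\theta)\in[0,\theta_0)$ such that the ray $e^{-i\phi}\set R_+$ stays inside $U_{\theta_0,r(\theta_0)}$ (recall $r(\theta_0)=0$), where $\gamma$ is holomorphic with sub-exponential growth, and at the same time $\Re(ie^{-i\phi}\xi)=r\cos(\theta-\phi)\ge\beta|\xi|$ for a constant $\beta=\beta(\theta_0)>0$. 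Since $\theta_0<\pi/2$ and $\theta<2\theta_0$ there is enough room: with $\kappa:=\tfrac12\min(\theta_0,\pi/2-\theta_0)>0$ and $\phi:=\min(\theta,\theta_0-\kappa)$ one checks $\phi<\theta_0$ and $0\le\theta-\phi<\theta_0+\kappa<\pi/2$, so $\beta:=\cos(\theta_0+\kappa)$ works.

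The second, and I expect the most delicate, step is to justify the representation
\[
\hat\gamma(\xi)=e^{-i\phi}\int_0^{+\infty}\gamma(e^{-i\phi}x)\,e^{-ie^{-i\phi}x\xi}\diff x,
\qquad\text{valid whenever }\Re(ie^{-i\phi}\xi)>0\text{ and }0\le\phi<\theta_0.
\]
I would first prove it when in addition $\Im\xi<0$: there the defining integral converges, and rotating the contour from $\set R_+$ to $e^{-i\phi}\set R_+$ is exactly the contour shift used in the proof of Proposition~\ref{th:fourier_transform}, with two extra remarks — the circular arc near the origin contributes $O(\delta^2)$ because $|\gamma(z)|\le p(\gamma)|z|$, and on the arc at infinity the sub-exponential growth of $\gamma$ is dominated by the genuine exponential decay of $e^{-iz\xi}$ precisely when both $\Im\xi<0$ and $\Re(ie^{-i\phi}\xi)>0$ hold. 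The right-hand side above is holomorphic on the half-plane $\{\Re(ie^{-i\phi}\xi)>0\}$, which is contained in $\set C\setminus i[0,+\infty)$ and on which $\hat\gamma$ is holomorphic by Proposition~\ref{th:fourier_transform}; since the two sides agree on the nonempty open subset where moreover $\Im\xi<0$, uniqueness of analytic continuation extends the identity to the whole half-plane, which contains our $\xi$ because $|\theta-\phi|<\pi/2$. The care needed is in this analytic-continuation bookkeeping and in the uniform lower bound on $\Re(ie^{-i\phi}\xi)$; everything else is routine.

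With the representation in hand the estimate is immediate: $|e^{-i\phi}|=1$ and $\Re(ie^{-i\phi}\xi)\ge\beta|\xi|$ give
\[
|\hat\gamma(\xi)|\le\int_0^{+\infty}|\gamma(e^{-i\phi}x)|\,e^{-\beta|\xi|x}\diff x,
\]
and I split the integral at $x=1$. On $(0,1)$, $e^{-i\phi}x$ lies in $\{|z|<1,\ |\arg z|<\theta_0\}$, so $|\gamma(e^{-i\phi}x)|\le p(\gamma)\,x$; on $(1,+\infty)$, $e^{-i\phi}x\in U_{\theta_0,r(\theta_0)}$, so $|\gamma(e^{-i\phi}x)|\le p_{\theta_0,\eta}(\gamma)\,e^{\eta x}$. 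The first piece is at most $p(\gamma)\int_0^{+\infty}x\,e^{-\beta|\xi|x}\diff x=p(\gamma)(\beta|\xi|)^{-2}$. For the second, I choose $\eta:=\tfrac12\beta\epsilon>0$, so that $|\xi|>\epsilon$ forces $\eta-\beta|\xi|\le-\tfrac12\beta|\xi|$ and hence $\int_1^{+\infty}e^{(\eta-\beta|\xi|)x}\diff x\le\tfrac{2}{\beta|\xi|}e^{-\frac12\beta|\xi|}\le\tfrac{4}{e\beta^2|\xi|^2}$, using $se^{-s}\le e^{-1}$. Adding the two pieces yields $|\hat\gamma(\xi)|\le\tfrac{4}{\beta^2}|\xi|^{-2}\bigl(p(\gamma)+p_{\theta_0,\eta}(\gamma)\bigr)$, which is the claim with $C=4/\beta^2$ and $\eta=\tfrac12\beta\epsilon$, both depending only on $\theta_0$ and $\epsilon$.
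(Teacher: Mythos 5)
Your proof is correct and follows essentially the same approach as the paper: rotate the integration contour onto a ray $e^{-i\phi}\mathbb{R}_+$ with $\phi<\theta_0$ so that $\Re(ie^{-i\phi}\xi)\gtrsim|\xi|$, split at $|z|=1$, and bound the two pieces using $p(\gamma)$ and $p_{\theta_0,\eta}(\gamma)$ with $\eta$ proportional to $\epsilon$. The only superficial differences are your rule $\phi=\min(\theta,\theta_0-\kappa)$ in place of the paper's simpler choice of rotating by exactly half of $\arg(-i\xi)$, and the fact that you spell out the analytic-continuation bookkeeping for the contour shift that the paper merely delegates to the proof of Proposition~\ref{th:fourier_transform}.
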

\begin{proof}
  The proof is mostly redoing the calculation of the proof of the previous 
proposition, but this time using the increased regularity (at $0$) to get the stated 
estimate.
  
  Let $\xi = -ire^{i2\theta}$ with $|\theta|<\theta_0$ and $r>\epsilon$. Thanks to 
the proof of the previous proposition, we have $\hat \gamma(\xi) = 
\int_{e^{-i\theta}\set R_+} \gamma(z) e^{-iz\xi} \diff z = e^{-i\theta} 
\int_0^{+\infty} \gamma(e^{-i\theta}x) e^{-e^{i\theta}xr}\diff x$. We then write 
$|\hat \gamma(\xi)| \leq \int_0^1 p(\gamma) xe^{-\cos(\theta) rx}\diff x + 
\int_1^{+\infty} p_{\theta_0,\eta}(\gamma) e^{\eta x} e^{-\cos(\theta)r x}\diff x$, 
which is true for all $\eta>0$.
\begin{figure}[ht]
 \begin{minipage}[c]{0.6\textwidth}
  \begin{center}
  \begin{tikzpicture}[scale=1]
  \fill[yellow!60] (120:0.5) -- (120:3) 
    arc[radius = 3, start angle = 120, delta angle = 300]
    -- (60:0.5) arc[radius=0.5,start angle=60, delta angle=-300] -- cycle;
  \fill[red!70] (-165:0.5) -- (-165:3) 
    arc[radius = 3, start angle = -165, delta angle = 150]
    -- (-15:0.5) arc[radius = 0.5, start angle = -15, delta angle=-150] -- cycle;
  
  \draw[thick,->] (0,-1) arc[radius = 1, start angle = -90, delta angle = 75];
  \node[right] at (-65:1.1){$\theta_0$};
  \draw[thick,->] (0,-0.8) arc[radius = 0.8, start angle = -90, delta angle = 150];
  \node[right] at (30:0.8){$2\theta_0$};
  
  \draw (40:2)  node[above right]{$\xi = -ire^{i2\theta}$} \cross{0.2};
  \draw (-25:2)  node[below right]{$-ire^{i\theta}$} \cross{0.2};
  
  \draw[<->, thick] (0,0) -- (-110:0.5) node[pos=0.4, left]{$\epsilon$};
  
  \draw[->] (-3,0) -- (3,0);
  \draw[->] (0,-3) -- (0,3);
  \draw (-15:0.5) -- (-15:3)
	(-165:0.5) -- (-165:3)
	(60:0.5) -- (60:3)
	(120:0.5) -- (120:3)
	(120:0.5) arc[radius=0.5, start angle = 120, delta angle = 300];
\end{tikzpicture}
  \end{center}
 \end{minipage}\hfill%
 \begin{minipage}[c]{0.4\textwidth}
 \caption{The sub-exponential growth of $\gamma$ gives us an estimate on $\hat 
 \gamma$  on the red domain, and a change of integration path allows us to extend 
 this  estimate  on the yellow domain.}\label{fig:fourier}
 \end{minipage}
\end{figure}
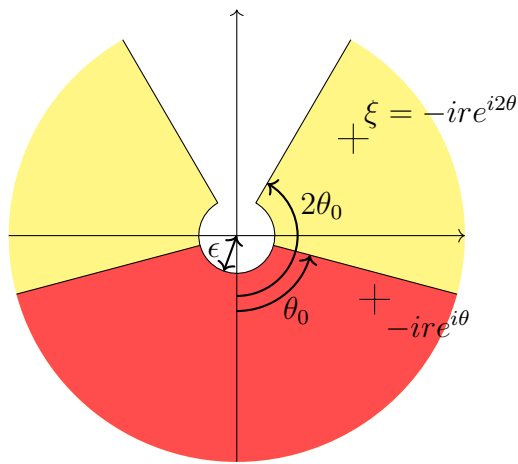

  For the first term of the right hand side, we make the change of variables $x' = 
\cos(\theta) r x$, so that $\int_0^1 xe^{-\cos(\theta)rx}\diff x = 
\frac1{(r\cos(\theta))^2} \int_0^{r\cos(\theta)} x' e^{-x'}\diff x \leq 
\frac1{(r\cos(\theta))^2} \Gamma(2)$.
  
  For the second term of the right hand side, we have $\int_1^{+\infty} e^{x(\eta - \cos(\theta)r)}\diff x = \frac1{\cos(\theta)r - \eta} e^{\eta - \cos(\theta)r}$ as long as $\eta < \cos(\theta)r$. We then choose $\eta = \frac\epsilon2\cos(\theta_0)$ so that $|\theta|<\theta_0$ and $r>\epsilon$ implies $\cos(\theta)r - \eta > \cos(\theta_0) \epsilon - \frac\epsilon2\cos(\theta_0) = \frac12\cos(\theta_0) \epsilon$. So $\int_1^{+\infty} e^{x(\eta-\cos(\theta)r)}\diff x \leq \frac2{\epsilon\cos(\theta_0)}e^\eta e^{-r\cos(\theta_0)}$. So, writing $c = \sup_{t>0}(t^2e^{-t})$ and $C_2 = \frac{2ce^\eta}{\epsilon\cos(\theta_0)}$, we have $\int_0^{+\infty} e^{x(\eta-\cos(\theta)r)}\diff x \leq C_2 \frac1{(r\cos(\theta_0))^2}$.
  
  Combining these two inequalities, we have:
  \[|\hat \gamma(\xi)|\leq (\Gamma(2) p(\gamma) + C_2 
p_{\theta_0,\eta}(\gamma))\frac{1}{\cos(\theta_0)^2} r^{-2}\qedhere.\]
\end{proof}

With the previous two properties, we can prove the main tool for establishing estimates on holomorphy default operators:
\begin{prop}\label{th:conv_kernel}
  Let $\gamma$ in $S(r)$ and $K_\gamma$ the function defined by $K_\gamma(z) = \sum 
\gamma(n) z^n$. Then $K_\gamma$ admits an holomorphic extension to $\set C \setminus 
[1,+\infty[$. Moreover, the map $\gamma\in S(r) \mapsto K_\gamma \in \mathcal 
O(\set C\setminus[1,+\infty[)$ is continuous.
\end{prop}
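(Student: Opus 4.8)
The plan is to use Poisson's summation formula to express $K_\gamma(z) = \sum_{n\geq 0}\gamma(n) z^n$ in terms of the Fourier transform $\hat\gamma$, whose analytic continuation and decay have just been established in Propositions \ref{th:fourier_transform} and \ref{th:est_fourier}. Writing $z = e^{i\zeta}$ with $\zeta$ in the upper half-plane (so $|z|<1$), the sum $\sum_{n\geq 0}\gamma(n) e^{in\zeta}$ is, heuristically, $\sum_{k\in\set Z}\hat\gamma(\zeta + 2\pi k)$; the half-line $[1,+\infty)$ in the $z$-plane corresponds to $\zeta\in 2\pi\set Z$, which is exactly where the $k=0$ term $\hat\gamma(\zeta)$ has its singularity (recall $\hat\gamma$ is holomorphic on $\set C\setminus i[0,+\infty)$, and $\zeta$ near $0$ real is on the boundary). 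So the first step is to make this rigorous: apply Proposition \ref{th:change_domain} to reduce to a symbol with $r(\theta_0)=0$ for some $\theta_0$ (multiplying by a suitable power of $z$ or shifting the argument changes neither the singularity structure of $K_\gamma$ nor continuity), then for $\Im(\zeta)>0$ and $\zeta$ in a strip justify the Poisson formula $\sum_{n\geq 0}\gamma(n)e^{in\zeta} = \sum_{k\in\set Z}\hat\gamma(\zeta+2\pi k)$. Here one must be a little careful because we sum over $n\geq 0$ only, not $n\in\set Z$; the cleanest route is to apply Poisson to $\gamma$ extended by a cutoff, or to write $K_\gamma$ directly as a contour integral $\frac{1}{2i}\int_{\set R - i\epsilon \to \set R + i\epsilon}\frac{\hat\gamma(\cdot)}{\dots}$ — in any case the output is a representation of $K_\gamma$ valid for $\Im(\zeta)>0$.

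Next I would use the decay estimate from Proposition \ref{th:est_fourier}, $|\hat\gamma(\xi)|\lesssim |\xi|^{-2}$ on a sector around the negative imaginary axis (and, after the contour rotations in Proposition \ref{th:fourier_transform}, on a full neighborhood of $\set C\setminus i[0,+\infty)$ away from $0$), to show that the tail $\sum_{k\neq 0}\hat\gamma(\zeta + 2\pi k)$ converges locally uniformly and defines a holomorphic function of $\zeta$ on a neighborhood of the real line minus $2\pi\set Z$ — the $|\xi|^{-2}$ decay makes the series absolutely and locally uniformly convergent. Meanwhile the $k=0$ term $\hat\gamma(\zeta)$ is already known to extend holomorphically to $\set C\setminus i[0,+\infty)$. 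Translating back via $z = e^{i\zeta}$, both pieces glue to give a holomorphic extension of $K_\gamma$ across every point of the unit circle except $z=1$, and across the exterior of the disk except the slit $[1,+\infty)$; combined with the original power series on $|z|<1$ this yields the claimed holomorphic extension to $\set C\setminus[1,+\infty)$.

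For the continuity statement, I would track the estimates quantitatively: Proposition \ref{th:est_fourier} bounds $|\hat\gamma|$ on the relevant sector by $C(p(\gamma) + p_{\theta_0,\eta}(\gamma))$, both of which are continuous seminorms on $S(r)$ (the first after the reduction ensuring the needed regularity at $0$, the second by definition). Hence on any compact $K\subset\set C\setminus[1,+\infty)$ the representation above bounds $|K_\gamma|_{L^\infty(K)}$ by a continuous seminorm of $\gamma$, which is exactly continuity of $\gamma\mapsto K_\gamma$ into $\mathcal O(\set C\setminus[1,+\infty))$. The main obstacle I anticipate is the first step — making the Poisson summation rigorous given that we only sum over $n\geq 0$ and that $\hat\gamma$ is singular precisely on the imaginary axis, so the contour of integration defining $\hat\gamma$ and the strip in which Poisson's formula is applied must be chosen compatibly; once the representation $K_\gamma(e^{i\zeta}) = \sum_k \hat\gamma(\zeta + 2\pi k)$ (or its one-sided analogue) is in hand, the analytic continuation and the continuity both follow routinely from the $|\xi|^{-2}$ decay.
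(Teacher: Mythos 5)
Your proposal follows essentially the same route as the paper: reduce via Proposition \ref{th:change_domain} to a symbol regular near $0$, apply Poisson summation to write $K_\gamma(e^{-i\xi})$ as $\sum_k \hat\gamma(\xi + 2\pi k)$, and use the holomorphic extension of $\hat\gamma$ together with its $|\xi|^{-2}$ decay to extend the sum and to track continuity. One small caveat: with your convention $z = e^{i\zeta}$ and $\hat\gamma(\xi)=\int_0^\infty\gamma e^{-ix\xi}$, the Poisson sum comes out as $\sum_k\hat\gamma(2\pi k - \zeta)$ rather than $\sum_k\hat\gamma(\zeta+2\pi k)$ (the paper sidesteps this by parametrizing $\zeta = e^{-i\xi}$), and the reduction step must be the combined operation $\tilde\gamma(z)=\gamma(z+n_1)-\gamma(n_1)$ — the subtraction, not just the shift, is what gives the $O(|z|)$ behaviour at $0$ that Proposition \ref{th:est_fourier} requires for the $|\xi|^{-2}$ bound.
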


\begin{remark}
This theorem was already essentially proved by Lindelöf \cite{lindelof1905calcul} in 
the special case $r(\theta) = \frac{r_0}{\cos(\theta)}$, that is when the domain of 
definition of $\gamma$ is the half-plane $\{\Re(z)>r_0\}$, and the case of a general 
$r$ was proved by Arakelyan \cite{_efficient_1985}. Our method is different than 
the previous 
two references, and, most importantly, we prove the continuity of the analytic 
continuation with respect to the topology of $S(r)$.
\end{remark}

\begin{proof}
Let $G$ be a connected relatively compact open subset of $\set C \setminus [1,+\infty)$.  We suppose without loss of generality that $G$ intersects the unit disk $D(0,1)$. We want to show that $K_\gamma$ can be extended to a bounded holomorphic function on $G$, and that this extension depends continuously on $\gamma$ for the topology of uniform convergence on $G$.

First, we reduce the problem to a case where we can use the previous estimate on the Fourier transform of symbols, by defining $\tilde \gamma(z) = \gamma(z+n_1) - \gamma(n_1)$ with $n_1$ large enough. We can explicitly compute $K_\gamma$ from $K_{\tilde \gamma}$, so we focus on the later, and apply Poisson summation formula to the sum defining $K_{\tilde \gamma}$, the estimate on $\hat{\tilde \gamma}$ allowing us to holomorphically extend the sum.

\paragraph{Choice of $n_1$}
The Poisson summation formula will involve terms of the form $\hat{\tilde\gamma}(i\ln(\zeta) + 2\pi k)$, so we let $F = \{\xi \in \set C, e^{-i\xi}\in G\}$. For all $\zeta$ in $\set C$, $\zeta\in G$ is equivalent to $i\ln(\zeta)\in F$, whatever the determination of the logarithm.
\begin{figure}[ht]
  \begin{center}
    \definecolor{cff0000}{RGB}{255,0,0}
\definecolor{c000067}{RGB}{0,0,103}

\begin{tikzpicture}[y=0.80pt, x=0.80pt, yscale=-1.000000, xscale=1.000000, inner sep=0pt, outer sep=0pt]
\path[fill=cff0000,line join=miter,line cap=butt,fill opacity=0.662,even odd
  rule,line width=0.800pt] (205.0000,122.3622) -- (205.0000,198.0711) --
  (415.0000,198.0711) -- (415.0000,122.3622)(345.0000,122.3622) .. controls
  (340.0000,122.3622) and (330.0000,167.3622) .. (320.0000,167.3622) .. controls
  (310.0000,167.3622) and (305.0000,177.3622) .. (295.0000,162.3622) .. controls
  (285.0000,147.3622) and (280.0000,122.3622) ..
  (275.0000,122.3622)(275.0217,122.3622) .. controls (270.0217,122.3622) and
  (260.0217,167.3622) .. (250.0217,167.3622) .. controls (240.0217,167.3622) and
  (235.0217,177.3622) .. (225.0217,162.3622) .. controls (215.0217,147.3622) and
  (210.0217,122.3622) .. (205.0217,122.3622)(415.0000,122.3622) .. controls
  (410.0000,122.3622) and (400.0000,167.3622) .. (390.0000,167.3622) .. controls
  (380.0000,167.3622) and (375.0000,177.3622) .. (365.0000,162.3622) .. controls
  (355.0000,147.3622) and (350.0000,122.3622) .. (345.0000,122.3622);
\path[draw=black,line join=miter,line cap=butt,even odd rule,line width=0.800pt]
  (195.5162,162.5455) -- (425.2699,162.4539);
\path[draw=black,line join=miter,line cap=butt,even odd rule,line width=0.800pt]
  (309.9467,212.3958) -- (309.9816,87.3854);
\path[draw=black,miter limit=4.00,line width=0.800pt]
  (310.1573,137.3627)arc(270.360:296.565:25.000000 and 25.000);
\path[xscale=0.711,yscale=1.406,fill=black,line join=miter,line cap=butt,line
  width=0.800pt] (440.6323,95.4886) node[above right] (text16091) {$\phi$};
\path[draw=black,line join=miter,line cap=butt,even odd rule,line width=0.800pt]
  (379.9993,159.3622) -- (379.9993,165.3622);
\path[xscale=0.713,yscale=1.403,fill=black,line join=miter,line cap=butt,line
  width=0.800pt] (532.2780,124.1620) node[above right] (text16097) {$2\pi$};
\path[draw=black,line join=miter,line cap=butt,even odd rule,line width=0.800pt]
  (240.2337,158.7313) -- (240.2337,164.7313);
\path[xscale=0.713,yscale=1.403,fill=black,line join=miter,line cap=butt,line
  width=0.800pt] (336.7041,123.1897) node[above right] (text16097-0) {$-2\pi$};
\path[xscale=0.887,yscale=1.128,fill=black,line join=miter,line cap=butt,line
  width=0.800pt] (307.8417,165.3881) node[above right] (text19778) {$F$};
\path[draw=black,fill=cff0000,line join=miter,line cap=butt,miter
  limit=4.00,fill opacity=0.662,even odd rule,line width=0.320pt]
  (536.8809,160.8484) .. controls (546.8809,145.8484) and (544.0293,136.5816) ..
  (539.0293,131.5816) .. controls (534.0293,126.5816) and (510.4146,131.2150) ..
  (500.4146,136.2150) .. controls (494.0900,139.3773) and (480.4146,151.2150) ..
  (480.4146,161.2150) .. controls (480.4146,171.2150) and (485.4146,181.2150) ..
  (490.4146,186.2150) .. controls (500.4146,196.2150) and (536.0312,191.7702) ..
  (539.4258,188.0991) .. controls (550.4146,176.2150) and (530.4146,171.2150) ..
  (536.8809,160.8484) -- cycle;
\path[draw=black,line join=miter,line cap=butt,even odd rule,line width=0.800pt]
  (432.2472,161.7457) -- (594.4223,162.5863);
\path[draw=black,line join=miter,line cap=butt,even odd rule,line width=0.800pt]
  (520.3410,217.2695) -- (520.4929,82.5774);
\path[draw=black,line join=miter,line cap=butt,even odd rule,line width=0.800pt]
  (542.8894,159.0870) -- (542.8894,165.0870);
\path[xscale=0.783,yscale=1.277,fill=black,line join=miter,line cap=butt,line
  width=0.800pt] (692.3909,136.8159) node[above right] (text10280) {$1$};
\path[xscale=0.574,yscale=1.741,fill=black,line join=miter,line cap=butt,line
  width=0.800pt] (862.9313,91.1825) node[above right] (text10308) {$G$};
\path[xscale=0.642,yscale=1.557,fill=black,line join=miter,line cap=butt,line
  width=0.800pt] (898.3008,104.3248) node[above right] (text10333) {$\phi$};
\path[draw=black,miter limit=4.00,fill opacity=0.407,line width=0.800pt]
  (571.1008,151.3647)arc(-19.885:1.332:30.000);
\path[draw=black,line join=round,line cap=butt,even odd rule,line width=0.800pt]
  (591.0458,165.5455) -- (594.0000,162.3622) -- (591.0000,159.3622);
\path[draw=black,line join=round,line cap=butt,even odd rule,line width=0.800pt]
  (422.6954,165.6371) -- (425.6496,162.4538) -- (422.6496,159.4538);
\path[draw=black,line join=round,line cap=butt,even odd rule,line width=0.800pt]
  (306.8156,90.0060) -- (309.9989,87.0518) -- (312.9989,90.0518);
\path[draw=black,line join=round,line cap=butt,even odd rule,line width=0.800pt]
  (517.3221,85.8847) -- (520.5054,82.9305) -- (523.5054,85.9305);
\path[draw=black,line join=round,miter limit=4.00,fill opacity=0.662,line
  width=0.800pt]
  (321.2103,140.0165)arc(-63.358:13.298:25.000)arc(13.298:89.955:25.000);
\path[xscale=0.433,yscale=2.310,fill=black,line join=miter,line cap=butt,line
  width=0.800pt] (774.0784,67.1407) node[above right] (text5723) {$2\theta_0$};
\path[draw=black,line join=round,line cap=butt,even odd rule,line width=0.800pt]
  (319.3909,135.9346) -- (320.8781,140.0149) -- (316.9489,141.6154);
\path[draw=black,line join=round,line cap=butt,even odd rule,line width=0.800pt]
  (322.6669,144.2534) -- (321.3000,140.1313) -- (325.2744,138.6467);
\path[draw=c000067,line join=round,miter limit=4.00,fill opacity=0.662,even odd
  rule,line width=0.800pt] (345.0000,92.3622) -- (312.2460,158.0026) .. controls
  (314.3167,158.9338) and (315.4022,161.2654) .. (314.8634,163.5230) .. controls
  (314.3245,165.7806) and (312.3029,167.3705) .. (309.9819,167.3621) .. controls
  (307.6609,167.3541) and (305.6509,165.7492) .. (305.1284,163.4878) .. controls
  (304.6058,161.2264) and (305.7082,158.9027) .. (307.7702,158.0258) --
  (275.0000,92.3622);
\path[draw=c000067,dash pattern=on 0.32pt off 0.32pt,line join=round,miter
  limit=4.00,fill opacity=0.662,even odd rule,line width=0.320pt]
  (275.4001,92.3622) -- (242.6462,158.0026) .. controls (244.7168,158.9338) and
  (245.8024,161.2654) .. (245.2635,163.5230) .. controls (244.7246,165.7806) and
  (242.7030,167.3705) .. (240.3821,167.3621) .. controls (238.0611,167.3541) and
  (236.0510,165.7492) .. (235.5285,163.4878) .. controls (235.0060,161.2264) and
  (236.1084,158.9027) .. (238.1703,158.0258) -- (205.4001,92.3622);
\path[draw=c000067,dash pattern=on 0.32pt off 0.32pt,line join=round,miter
  limit=4.00,fill opacity=0.662,even odd rule,line width=0.320pt]
  (415.0158,92.3622) -- (382.2619,158.0026) .. controls (384.3325,158.9338) and
  (385.4181,161.2654) .. (384.8792,163.5230) .. controls (384.3404,165.7806) and
  (382.3188,167.3705) .. (379.9978,167.3621) .. controls (377.6768,167.3541) and
  (375.6667,165.7492) .. (375.1442,163.4878) .. controls (374.6217,161.2264) and
  (375.7241,158.9027) .. (377.7860,158.0258) -- (345.0158,92.3622);
\path[draw=black,line join=round,line cap=butt,even odd rule,line width=0.800pt]
  (569.2214,155.5926) -- (570.9984,151.6299) -- (574.9317,153.2202);
\path[draw=c000067,line join=round,miter limit=4.00,fill opacity=0.662,even odd
  rule,line width=0.800pt] (496.6882,201.7801) .. controls (532.4563,216.6505)
  and (564.8229,199.6209) .. (564.8727,179.4185) .. controls (564.8727,179.4185)
  and (566.3223,170.7837) .. (547.7152,164.6736) .. controls (547.0000,166.3622)
  and (544.7783,167.7820) .. (542.4880,167.4288) .. controls (540.1977,167.0756)
  and (538.4502,165.1950) .. (538.2659,162.8850) .. controls (538.0815,160.5749)
  and (539.5086,158.4409) .. (541.7139,157.7290) .. controls (543.9192,157.0171)
  and (546.3247,157.9138) .. (547.4840,159.8841) .. controls (567.0000,152.3622)
  and (564.6084,146.4200) .. (564.5766,144.8492) .. controls (564.1674,124.6509)
  and (532.0293,108.0592) .. (496.2611,122.9296) .. controls (474.5947,131.9372)
  and (446.2527,161.8245) .. (446.2527,161.8245) .. controls (446.2527,161.8245)
  and (475.0218,192.7724) .. (496.6882,201.7801) -- cycle;
\path[draw=black,line join=miter,line cap=butt,miter limit=4.00,even odd
  rule,line width=0.320pt] (542.4519,162.4804) -- (592.4519,142.4804);
\path[draw=black,line join=miter,line cap=butt,miter limit=4.00,even odd
  rule,line width=0.320pt] (542.7957,162.2744) -- (592.7957,182.2744);

\end{tikzpicture}
  \end{center}
  \caption{Left figure: in red, the domain $F$, in plain blue, the boundary of $\{-ire^{i\theta},r>\epsilon, |\theta|\leq 2\theta_0\}$ and in dotted blue, the boundary of the $2\pi$-periodic version of the previous domain. Right figure: in red, the domain $G = e^{-iF}$, and in blue, the boundary of $\{e^\xi, |\xi|>\epsilon,|\arg(\xi)|>\phi\}$.}\label{fig:n_1}
\end{figure}
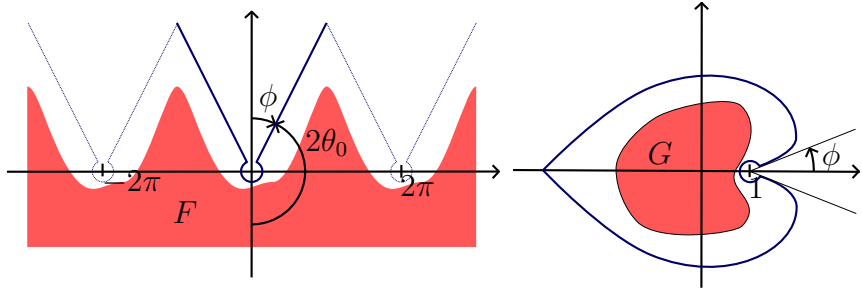

Since $G$ is relatively compact on $\set C\setminus [1,+\infty)$, there exists $\phi$ 
in $(0,\pi)$ and $\epsilon>0$ such that $G\subset \set C\setminus \{e^\xi,|\xi| \leq 
\epsilon \text{ or } |\arg(\xi)|\leq\phi\}$. Then, noting $\theta_0 = 
\frac12(\pi-\phi)$, $F$ is a subset of $\{-ire^{i\theta}, r>\epsilon,|\theta|< 
2\theta_0\}$ (see figure \ref{fig:n_1}). Let $n_1$ be a natural number greater than 
$r(\theta_0)$, for instance $n_1 = \lfloor r(\theta_0) \rfloor +1$, let $\tilde r: 
(0,\pi/2)\to \set R_+$ be defined by $\tilde r(\theta) = 0$ for 
$0<\theta\leq\theta_0$ and $\tilde r(\theta) = r(\theta)$ for $\theta> \theta_0$, and 
let $\tilde \gamma$ be defined by $\tilde\gamma(z) = \gamma(z+n_1) - \gamma(n_1)$.

According to the second point of Proposition \ref{th:change_domain}, $\tilde 
\gamma$ is in $S(\tilde r)$ and depends continuously on $\gamma$. Moreover, we 
have for $z$ in $\{|z|<1, |\arg(z)|<\theta_0\}$, $|\tilde\gamma(z)| \leq 
\sup_{t\in[n_1,z+n_1]}|\gamma'(t)| |z|$, so, if we define $p(\tilde \gamma)$ as 
in the previous proposition by $p(\tilde \gamma) = \sup_{|z|<1,|\arg(z)|<\theta_0} 
\frac{|\tilde \gamma (z)|}{|z|}$, we have $p(\tilde \gamma) \leq \sup_{|z|\leq 
1, |\arg(z)|\leq \theta_0} |\gamma'(z+n_1)|$, which is finite since the subset 
$\{z+n_1, |z|\leq 1, |\arg(z)|\leq \theta_0\}$ is compact in $\bigcup U_\theta$, 
and thanks to the second point of Proposition \ref{th:prop_symbols}, $\gamma\mapsto 
p(\tilde \gamma)$ is continuous.

\paragraph{Relation between $K_\gamma$ and $K_{\tilde\gamma}$}
We have for all $\zeta$ in the unit disk:
\begin{align}
 K_\gamma(\zeta) &= \sum_{n>r(0)} \gamma(n)\zeta^n \notag\\
  &= \sum_{r(0)<n<n_1} \gamma(n)\zeta^n + \zeta^{n_1}\left(
     \gamma(n_1)\sum_{n\geq 0} \zeta^n + \sum_{n\geq 0} \tilde \gamma(n)\zeta^n 
     \right)\notag\\
  &= \sum_{r(0)<n<n_1}\gamma(n)\zeta^n + \gamma(n_1)\frac{\zeta^{n_1}}{1-\zeta} 
+ \zeta^{n_1}K_{\tilde \gamma}(\zeta).\label{eq:gamma_nu}
\end{align}

So, if we prove that $K_{\tilde \gamma}$ extends holomorphically to $G$ and that the extension depends continuously on $\tilde \gamma$ in the topology of uniform convergence on $G$, we will have proved the same for $K_\gamma$.

\paragraph{Poisson summation formula and holomorphic extension} We have by definition of $K_{\tilde \gamma}$, for all $|\zeta|<1$: $K_{\tilde\gamma}(\zeta) = \sum_{n> 0} \tilde \gamma(n) \zeta^n$. So, the Poisson summation formula implies that for all $|\zeta|<1$:

\[K_{\tilde\gamma}(\zeta) = 2\pi \sum_{k\in \set Z} \widehat{\zeta^x 
\tilde\gamma(x)}(2\pi k) = 2\pi \sum_{k\in \set Z}\hat{\tilde\gamma}(i\ln \zeta + 
2\pi k).\]

Let us recall that $F = \{\xi, e^{-i\xi}\in G\}$ is a subset of $\{-ire^{i\theta}, 
r>\epsilon, |\theta|<2\theta_0\}$, and let us remark that it's a 
$2\pi$\nobreakdash-periodic domain, so if $z$ is in $F$, then for all $k\in \set Z$, 
$|z+2\pi k|>\epsilon$. So the estimate of Fourier transform of symbols (Proposition 
\ref{th:est_fourier}) implies that the sum $ k_{\tilde \gamma}(z) := \sum_{k\in \set 
Z} \hat{\tilde\gamma}(z + 2\pi k)$ converges, and satisfies $|k_{\tilde 
\gamma}(z)|\leq C(p(\tilde \gamma) + p_{\theta_0,\eta}(\tilde \gamma))\sum_{k\in \set 
Z}|z+2\pi k|^{-2}\leq C'_\epsilon (p(\tilde\gamma) + p_{\theta_0,\eta}(\tilde 
\gamma))$. Moreover, this sum converges uniformly in $z\in F$, so the limit function 
$k_{\tilde \gamma}$ is holomorphic, and depends continuously on $\tilde \gamma\in 
S(\tilde r)$.

Since we have $K_{\tilde \gamma}(\zeta) = k_{\tilde \gamma}(i\ln \zeta)$, $K_{\tilde 
\gamma}$ extends holomorphically on $G\setminus [0,+\infty)$. But we already knew 
that $K_{\tilde \gamma}$ is holomorphic on the unit disk, so $K_{\tilde \gamma}$ is 
holomorphic in $G$. Moreover, since $k_{\tilde \gamma}$ depends continuously on 
$\tilde \gamma$, $K_{\tilde \gamma}$ also depends continuously on $\tilde \gamma$. 
This completes the proof of the proposition.
\end{proof}

\subsection{Proof of the estimate for the holomorphy default 
operators}
Before stating the estimates for holomorphy default operators, let us define a few 
notations. Let $r$ be an non-decreasing function from $(0,\pi/2)$ to $\set R_+$. We 
note $r(0) = \inf_{\theta\in(0,\pi/2)}r(\theta)$. Let $\mathcal O_{r(0)}$ be the 
closed subspace of $\mathcal O(\set C)$ of entire functions of the form 
$\sum_{n>r(0)} a_n z^n$, i.e. $\mathcal O_{r(0)} = \{f\in \mathcal O(\set C), \forall 
0\leq j\leq r(0), f^{(j)}(0) = 0\}$. If we endow $\mathcal O_{r(0)}$ with the 
$L^\infty(U)$ norm for some open bounded subset $U$ of $\set C$, we will note this 
space $\mathcal O_{r(0)}^\infty(U)$.

\begin{theorem}\label{th:est_default}
Let $r:(0,\pi/2)\to \set R_+$ be a non-decreasing function. Let $\gamma$ in $S(r)$ 
and $H_\gamma$ the operator on polynomials with $\lfloor r(0)\rfloor$ vanishing 
derivatives at $0$, defined by: 
\[H_\gamma\bigg(\sum_{n>r(0)} a_n z^n\bigg) = \sum_{n>r(0)} \gamma(n) a_n z^n.\]
Let $U$ be an open bounded domain, star shaped with respect to $0$. Let $\delta>0$ 
and $U^\delta = \{z\in \set C,\distance(z,U)<\delta\}$. Then there exists $C>0$ such 
that for all polynomials $f$ with vanishing derivatives of order up to 
$\lfloor r(0)\rfloor$:
\[|H_\gamma(f)|_{L^\infty(U)}\leq C|f|_{L^\infty(U^\delta)}.\]

Moreover, the constant $C$ above can be chosen continuously in $\gamma\in S(r)$: the 
map $\gamma\in S(r) \mapsto H_\gamma$ is continuous from $S(r)$ to $\mathcal 
L\big(\mathcal O^\infty_{r(0)}(U^\delta),\, \mathcal O^\infty_{r(0)}(U)\big)$.
\end{theorem}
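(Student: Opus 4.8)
The plan is to represent the operator $H_\gamma$ as convolution with the kernel $K_\gamma(z) = \sum \gamma(n) z^n$, which Proposition \ref{th:conv_kernel} tells us is holomorphic on $\set C\setminus[1,+\infty)$ and depends continuously on $\gamma$. Concretely, for a polynomial $f(z) = \sum_{n>r(0)} a_n z^n$ and for $z\in U$, I would write $H_\gamma(f)(z)$ as a Cauchy-type integral pairing $f$ against $K_\gamma$. The natural identity is that if $f(z) = \sum a_n z^n$ then $\sum \gamma(n) a_n z^n = \frac{1}{2\pi i}\oint f(w) K_\gamma(z/w)\frac{dw}{w}$, where the contour is a circle of some radius $R$ around $0$ that encloses all the relevant singularities correctly: since $K_\gamma(\zeta)$ is holomorphic for $\zeta \notin [1,+\infty)$, the function $w\mapsto K_\gamma(z/w)$ is holomorphic for $w \notin z\cdot(0,1]$, i.e. on a slit going from $0$ out through $z$. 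So for fixed $z\in U$, one chooses a contour in the $w$-plane that encircles $0$ and this slit once; since $U$ is star-shaped with respect to $0$, the slit $z\cdot(0,1]$ stays inside (a neighbourhood of) $U$, which is what lets the contour be pushed out to roughly the boundary of $U^\delta$.

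The key steps, in order, are: (1) establish the contour-integral representation of $H_\gamma(f)$ on monomials and extend by linearity to all polynomials with the required vanishing derivatives; (2) for $z\in U$, deform the $w$-contour to a fixed curve $\Gamma$ lying in $U^\delta\setminus\overline{U}$ that, for every $z\in U$, winds once around the segment $z\cdot(0,1]$ — here star-shapedness of $U$ with respect to $0$ is exactly the geometric hypothesis that makes such a uniform $\Gamma$ exist inside the $\delta$-neighbourhood (the segment from $0$ to $z$ lies in $\overline U$, so its $\delta$-neighbourhood lies in $U^\delta$, and $\Gamma$ can be taken a slight fattening of $\partial U$); (3) bound $|H_\gamma(f)(z)| \leq \frac{1}{2\pi}\,\mathrm{length}(\Gamma)\cdot \sup_{w\in\Gamma}|f(w)| \cdot \sup_{w\in\Gamma,\, z\in U}|K_\gamma(z/w)|\cdot\sup_{w\in\Gamma}|w|^{-1}$; the middle factor is $\leq |f|_{L^\infty(U^\delta)}$ since $\Gamma\subset U^\delta$, and the last factor involving $K_\gamma$ is finite because $\{z/w : z\in U, w\in\Gamma\}$ is a compact subset of $\set C\setminus[1,+\infty)$ — this is again a consequence of choosing $\Gamma$ outside $\overline U$ while $U$ is star-shaped, so $z/w$ never reaches $[1,+\infty)$. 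This gives the estimate with $C = \frac{\mathrm{length}(\Gamma)}{2\pi}\sup|w|^{-1}\cdot |K_\gamma|_{L^\infty(K)}$ for that compact $K$; (4) for the continuity statement, observe that $C$ depends on $\gamma$ only through $|K_\gamma|_{L^\infty(K)}$, and Proposition \ref{th:conv_kernel} says $\gamma\mapsto K_\gamma$ is continuous from $S(r)$ into $\mathcal O(\set C\setminus[1,+\infty))$, hence $\gamma\mapsto |K_\gamma|_{L^\infty(K)}$ is continuous; since $H_\gamma$ depends linearly on $\gamma$ (through $K_\gamma$, which depends linearly on $\gamma$), continuity of the operator norm follows, and one actually gets that $\gamma\mapsto H_\gamma$ is continuous and linear into $\mathcal L(\mathcal O^\infty_{r(0)}(U^\delta), \mathcal O^\infty_{r(0)}(U))$.

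I expect the main obstacle to be step (2)–(3): carefully verifying that a \emph{single} contour $\Gamma$ (independent of $z\in U$ and of the polynomial $f$) can be chosen inside $U^\delta\setminus\overline{U}$ such that simultaneously (a) $\Gamma$ has winding number one around each segment $z\cdot(0,1]$ for $z\in U$, and (b) the set $\{z/w : z\in \overline U,\ w\in\Gamma\}$ avoids $[1,+\infty)$, so that $K_\gamma(z/w)$ is uniformly bounded. Both requirements are really statements about the geometry of star-shaped domains: (a) holds because the segments all emanate from $0$ and stay in $\overline U$, while $\Gamma\approx \partial U$ separates them from $\infty$; and (b) holds because $z\in\overline U$ and $w\in\Gamma$ with $|w|$ slightly larger "in the direction of $w$" than the boundary of $U$ forces $z/w$ to have modulus bounded away from the ray, or more precisely $z/w=1$ would force $z=w$, impossible since $w\notin\overline U$, and $z/w\in(1,+\infty)$ would force $w = z/t$ with $t\in(0,1)$, i.e. $w$ on the segment from $z$ (inclusive direction) to $0$, hence $w\in\overline U$ by star-shapedness, again impossible. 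Making this quantitative — i.e. extracting the distance of that compact set from $[1,+\infty)$, and hence the actual constant — together with the routine but slightly fiddly justification of the contour representation on polynomials via term-by-term integration of the geometric-type series for $K_\gamma$, is where the real work lies; everything else is bookkeeping and an appeal to Proposition \ref{th:conv_kernel}.
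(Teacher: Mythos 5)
Your proposal is correct and matches the paper's proof essentially verbatim: both express $H_\gamma(f)(z)$ as the contour integral $\frac{1}{2\pi i}\oint f(w)K_\gamma(z/w)\frac{dw}{w}$, deform the circle to a curve $c$ (your $\Gamma$) lying just outside $\overline U$ inside $U^\delta$, use star-shapedness of $U$ to show $\{z/w : z\in \overline U,\, w\in c\}$ is a compact subset of $\set C\setminus[1,+\infty)$ so $K_\gamma$ is bounded on it, and derive the continuity from Proposition~\ref{th:conv_kernel}. (Minor slip: when $z/w\in(1,+\infty)$ you have $w=z/t$ with $t>1$, not $t\in(0,1)$, but the conclusion $w\in z\cdot(0,1)$ is what you wrote and is correct.)
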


Before proving the theorem, let us remark that the sub-exponential growth of 
$\gamma$ 
implies that $H_\gamma$ do maps $\mathcal O_{r(0)}$ to $\mathcal O_{r(0)}$, so the 
theorem actually makes sense.

\begin{proof}
  Let $R>0$ large enough so that $\bar U \subset D(0,R)$. If $f=\sum a_n z^n$ is an entire function, we have $a_n = \frac1{2i\pi} \oint_{\partial D(0,R)} \frac{f(\zeta)}{\zeta^{n+1}} \diff \zeta$, so, for $z$ in $U$, we have:
  
  \begin{align*}H_\gamma(f)(z) &=
  \sum_{n} \gamma_n \frac1{2i\pi} 
   \oint_{\partial D(0,R)} \frac{f(\zeta)}{\zeta^{n+1}}z^n \diff \zeta\\
   &= \oint_{\partial D(0,R)} \frac1{2i\pi \zeta} K_\gamma\left(\frac z\zeta\right) 
f(\zeta)\diff \zeta.
  \end{align*}
  
  We want to change the integration path for one that is closer to $U$. For any 
closed curve $c$ around $U$, since $U$ is star-shaped with respect to $0$, for any 
$z\in U$ and $\zeta \in c$, we never have $z/\zeta \in [1,+\infty)$. So, the subset 
$\{z/\zeta, z\in U, \zeta\in c\}$ is a compact subset of $\set C\setminus 
[1,+\infty)$, and according to the previous proposition, $M_c(\gamma):=\sup_{z\in U, 
\zeta \in c} \big|K_\gamma\big(\frac z\zeta\big)\big|$ is finite and depends 
continuously on $\gamma \in S(r)$.
  
  So, we have for $z \in U$, $|H_{\gamma}(z)|\leq \sup_{\zeta\in c}\frac1{2\pi|\zeta|} M_c(\gamma) \sup_c|f|$. Since we can choose $c$ as close as we want to $U$, this proves the theorem.
\end{proof}

\begin{remark}
  Actually, the theorem we proved is the following: if $(\gamma_n)$ is a sequence of 
complex numbers such that the entire series $\sum_{n\geq 0}\gamma_n \zeta^n$ has 
non-zero convergence radius and that $K_\gamma(\zeta) := \sum_{n\geq 0} \gamma_n 
\zeta^n$ admits an holomorphic extension on $\set C \setminus [1,+\infty)$, then, for 
all domain $U$ satisfying the hypotheses of the theorem, and for all $\delta>0$, 
there exists $C>0$ such that for all entire functions $f$, 
$|H_\gamma(f)|_{L^\infty(U)}\leq C|f|_{L^\infty(U^\delta)}$. Moreover, $C$ can be 
chosen continuously in $K_\gamma$ (for the topology of uniform convergence on every 
compact).
\end{remark}

\section{Spectral analysis of the Fourier components}\label{sec:spectral}
\subsection{Introduction}
In this section, we prove estimates on the first eigenvalue
$\lambda_\alpha$ of $-\partial_x^2 + (\alpha x)^2$ on $(-1,1)$ with Dirichlet 
boundary conditions, and on its
associated eigenfunction. Let us recall some facts already mentioned by K. 
Beauchard, P. Cannarsa and R. Guglielmi \cite{beauchard_null_2014-1}, which are 
proved 
thanks to Sturm-Liouville's theory:
\begin{prop}\label{th:basic_eigen}
  Let $\alpha$ be a real number. The (unbounded) operator $P_\alpha = {-\partial_x^2 
+ (\alpha x)^2}$ on $L^2$ (with domain $H^1_0(-1,1)\cap H^2(-1,1)$) admits an 
orthonormal basis $(v_{\alpha k})_{k\geq 0}$ of eigenvectors, with the associated 
eigenvalues sequence $(\lambda_{\alpha k})_{k\geq0}$ being non-decreasing and 
tending to $+\infty$ as $k\to+\infty$. Moreover, the first eigenvalue 
$\lambda_\alpha = \lambda_{\alpha0}$ is simple, greater than $|\alpha|$, 
and we have $\lambda_\alpha \sim_{\alpha\to+\infty} \alpha$. Finally, the associated 
eigenvector $v_\alpha = v_{\alpha0}$ is even, positive on $(-1,1)$ 
and non-increasing on $[0,1)$.
\end{prop}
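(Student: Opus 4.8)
The plan is to read Proposition~\ref{th:basic_eigen} as a textbook statement about one-dimensional Schrödinger operators; the one genuinely useful trick is a Gaussian change of unknown that produces at once the strict lower bound on $\lambda_\alpha$ and its sharp asymptotics. First, for the spectral decomposition: since $x\mapsto\alpha^2x^2$ is bounded on $(-1,1)$, $P_\alpha$ is a bounded self-adjoint (multiplication) perturbation of the Dirichlet Laplacian on $(-1,1)$, which is self-adjoint on $H^1_0(-1,1)\cap H^2(-1,1)$, bounded below, and has compact resolvent by Rellich's theorem. By the Kato--Rellich theorem $P_\alpha$ is therefore self-adjoint on the same domain, bounded below, and still has compact resolvent, so the spectral theorem yields an orthonormal basis $(v_{\alpha k})_{k\geq 0}$ of eigenvectors with real eigenvalues accumulating only at $+\infty$, which I relabel in non-decreasing order as $(\lambda_{\alpha k})_{k\geq 0}$.

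Next, the lower bound and the asymptotics. By the min--max principle, $\lambda_\alpha=\inf\{q_\alpha(u)/\|u\|_{L^2}^2:u\in H^1_0(-1,1)\setminus\{0\}\}$ with $q_\alpha(u)=\int_{-1}^1(|u'|^2+\alpha^2x^2|u|^2)\diff x$, the infimum being attained at $v_\alpha$. Writing $u=e^{-|\alpha|x^2/2}v$ and integrating by parts once (the boundary terms vanish because $v(\pm1)=0$) gives the identity $q_\alpha(u)=\int_{-1}^1 e^{-|\alpha|x^2}|v'|^2\diff x+|\alpha|\,\|u\|_{L^2}^2$; hence $q_\alpha(u)\geq|\alpha|\,\|u\|_{L^2}^2$, with equality forcing $v'\equiv0$, i.e. $v\equiv0$, so that $\lambda_\alpha>|\alpha|$. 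For the matching upper bound I fix a smooth $\psi$ supported in $(-1,1)$ with $\psi\equiv1$ near $0$ and test with $\phi_\alpha=e^{-\alpha x^2/2}\psi$: the same identity gives $q_\alpha(\phi_\alpha)/\|\phi_\alpha\|_{L^2}^2=\alpha+\bigl(\int e^{-\alpha x^2}|\psi'|^2\bigr)\big/\bigl(\int e^{-\alpha x^2}\psi^2\bigr)$, and since $\psi'$ vanishes near $0$ the numerator is exponentially small in $\alpha$ while the denominator is of order $\alpha^{-1/2}$, so the quotient is $o(1)$ as $\alpha\to+\infty$; together with $\lambda_\alpha>\alpha$ this gives $\lambda_\alpha\sim\alpha$.

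It remains to establish the qualitative properties of $v_\alpha$. \emph{Simplicity}: two eigenfunctions for a common eigenvalue $\lambda$ both solve $v''=(\alpha^2x^2-\lambda)v$, so their Wronskian is constant, and it vanishes at $x=1$, hence the eigenspace is one-dimensional. \emph{Positivity}: for real $u\in H^1_0(-1,1)$ one has $q_\alpha(|u|)=q_\alpha(u)$ and $\||u|\|_{L^2}=\|u\|_{L^2}$, so $|v_\alpha|$ also realizes the infimum and is therefore an eigenfunction; thus one may take $v_\alpha\geq0$, and an interior zero $x_0$ of $v_\alpha$ would also be a critical point, so that Cauchy--Lipschitz applied to the ODE with data $v_\alpha(x_0)=v_\alpha'(x_0)=0$ would give $v_\alpha\equiv0$ --- a contradiction --- whence $v_\alpha>0$ on $(-1,1)$. \emph{Evenness}: $x\mapsto v_\alpha(-x)$ is an eigenfunction for $\lambda_\alpha$, hence by simplicity equals $\pm v_\alpha$, and the minus sign would make $v_\alpha$ odd, so zero at $0$, contradicting positivity. \emph{Monotonicity on $[0,1)$}: $v_\alpha$ is smooth, $v_\alpha'(0)=0$ by evenness, and $v_\alpha''(0)=-\lambda_\alpha v_\alpha(0)<0$, so $v_\alpha'<0$ just to the right of $0$; if $x_1\in(0,1)$ were the first zero of $v_\alpha'$, each of the three signs of $v_\alpha''(x_1)=(\alpha^2x_1^2-\lambda_\alpha)v_\alpha(x_1)$ is impossible --- a negative value would force $v_\alpha'>0$ just to the left of $x_1$; a zero value gives the same contradiction since $v_\alpha'''(x_1)=2\alpha^2x_1v_\alpha(x_1)>0$; and a positive value keeps $v_\alpha''>0$, hence $v_\alpha'>0$, on all of $(x_1,1)$, forcing $v_\alpha(1^-)>v_\alpha(x_1)>0$ against $v_\alpha(1)=0$. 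Therefore $v_\alpha'<0$ on $(0,1)$, which is the claimed monotonicity.

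The functional-analytic part is entirely routine; the delicate point I expect is the monotonicity, because for large $\alpha$ one has $\alpha^2x^2>\lambda_\alpha$ on most of $(0,1)$ --- the ``wrong'' sign for a naive maximum-principle argument --- so one really must run the three-case ordinary differential equation analysis rather than invoke a comparison principle. The Gaussian substitution is the single idea that does real work, yielding simultaneously the strict inequality $\lambda_\alpha>|\alpha|$ and the asymptotics $\lambda_\alpha\sim\alpha$.
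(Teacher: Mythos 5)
Your proof is correct, but it goes by a genuinely different and more self-contained route than the paper. The paper does not actually prove Proposition~\ref{th:basic_eigen}: it states that these are facts ``proved thanks to Sturm--Liouville's theory'' and refers the reader to Beauchard, Cannarsa and Guglielmi \cite{beauchard_null_2014-1}. In that framework, simplicity, positivity and the absence of interior zeros of the ground state are off-the-shelf consequences of the oscillation theorem, and the asymptotics $\lambda_\alpha\sim\alpha$ comes from a comparison/scaling argument in the cited paper. You instead derive everything directly from the variational characterization: the Gaussian substitution $u=e^{-|\alpha|x^2/2}v$ linearizes the quadratic form and simultaneously yields the strict bound $\lambda_\alpha>|\alpha|$ and, with a cut-off test function, the sharp asymptotics (in fact the stronger statement $\lambda_\alpha-\alpha\to 0$ rather than mere equivalence); you get simplicity by the Wronskian, positivity via the $q_\alpha(|u|)=q_\alpha(u)$ trick plus Cauchy--Lipschitz uniqueness, evenness from simplicity, and monotonicity by a careful three-case sign analysis at the first critical point of $v_\alpha'$ --- which, as you correctly observe, is needed precisely because the potential term has the ``wrong'' sign on most of $(0,1)$ for a naive maximum-principle argument. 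All of your steps check out, including the integration by parts producing the identity $q_\alpha(u)=|\alpha|\,\|u\|^2_{L^2}+\int e^{-|\alpha|x^2}|v'|^2$, and the computation $v_\alpha'''(x_1)=2\alpha^2 x_1 v_\alpha(x_1)$ in the degenerate case. What the paper's approach buys is brevity by delegation to classical Sturm--Liouville theory; what yours buys is a fully elementary, self-contained derivation with no external spectral-theoretic input beyond Kato--Rellich, Rellich compactness and min--max.
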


These properties are also linked to the scaling $x = y/\sqrt \alpha$. Indeed, if we 
define $\tilde v_\alpha$ by $\tilde v_\alpha(y) = v_\alpha(y/\sqrt \alpha)$, $\tilde 
v_\alpha$ satisfies 
$-\tilde v_\alpha'' + y^2 \tilde v_\alpha = \frac{\lambda_\alpha}\alpha \tilde 
v_\alpha$, a fact we will use 
extensively in all the proofs in this section. As an example of this scaling, we can 
already prove the following lemma, which was used to get a lower bound on the 
left hand side of the observability inequality in Proposition \ref{th:obs_hol}:
\begin{lemma}\label{th:lemma2}
If we normalize $v_n$ by $v_n(0) = 1$ instead of $|v_n|_{L^2(-1,1)} = 1$, there 
exists $c>0$ such that for all $n\geq 1$,  
$|v_n|_{L^2(-1,1)} \geq c n^{-1/4}$.
\end{lemma}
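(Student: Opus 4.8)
The plan is to exploit the rescaling $x = y/\sqrt n$ used throughout this section. Writing $\tilde v_n(y) = v_n(y/\sqrt n)$, the change of variables $x = y/\sqrt n$ in the integral gives
\[
  |v_n|_{L^2(-1,1)}^2 = \frac1{\sqrt n}\int_{-\sqrt n}^{\sqrt n}\tilde v_n(y)^2\diff y,
\]
so it is enough to bound $\int_{-\sqrt n}^{\sqrt n}\tilde v_n(y)^2\diff y$ from below by a constant independent of $n$. Recall that $\tilde v_n$ solves $-\tilde v_n'' + y^2\tilde v_n = \mu_n\tilde v_n$ on $(-\sqrt n,\sqrt n)$ with $\mu_n := \lambda_n/n$; by Proposition \ref{th:basic_eigen} we have $\mu_n\ge 1$ and $\mu_n\to 1$, so $M := \sup_{n\ge 1}\mu_n$ is finite and $\ge 1$. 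Proposition \ref{th:basic_eigen} also gives that $\tilde v_n$ is even, that $\tilde v_n(0) = v_n(0) = 1$, and that $\tilde v_n$ is non-increasing on $[0,\sqrt n)$; in particular $0<\tilde v_n\le 1$ there, and $\tilde v_n'(0) = 0$.

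The core of the argument is a local lower bound near the origin. For $y\in[0,1]$ (which lies in $[0,\sqrt n)$ as soon as $n\ge 1$) one has $|\tilde v_n''(y)| = |y^2-\mu_n|\,\tilde v_n(y)\le M$. Since $\tilde v_n(0)=1$ and $\tilde v_n'(0)=0$, Taylor's formula with integral remainder yields $\tilde v_n(y)\ge 1-\tfrac M2 y^2$ for $y\in[0,1]$. Setting $a := \min\!\left(1,\,1/\sqrt M\right)$, it follows that $\tilde v_n(y)\ge\tfrac12$ on $[0,a]$, hence
\[
  \int_{-\sqrt n}^{\sqrt n}\tilde v_n(y)^2\diff y \;\ge\; 2\int_0^a\tilde v_n(y)^2\diff y \;\ge\; \frac a2 .
\]
Combined with the first display this gives $|v_n|_{L^2(-1,1)}^2\ge\tfrac a2\,n^{-1/2}$, i.e. the claim with $c=\sqrt{a/2}$.

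I do not expect any real obstacle here. The only point needing a word of justification is the uniform bound $M=\sup_{n\ge1}\lambda_n/n<+\infty$, which follows from the asymptotic $\lambda_\alpha\sim_{\alpha\to+\infty}\alpha$ of Proposition \ref{th:basic_eigen} (a convergent sequence is bounded), or, if one prefers, from the sharper expansion of Theorem \ref{th:asymptotic}; everything else reduces to the scaling identity and the elementary observation that $\tilde v_n$ is pinned at height $1$ with horizontal tangent at the origin, so its graph cannot drop too fast on a fixed interval.
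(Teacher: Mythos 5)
Your proof is correct, and it takes a genuinely different route from the paper's. The paper also rescales to $\tilde v_n(y)=v_n(y/\sqrt n)$ and uses the same change of variables, but then invokes continuous dependence of ODE solutions on the parameter $\mu_n=\lambda_n/n\to1$ to conclude that $\tilde v_n\to e^{-y^2/2}$ uniformly on compacts; from this it reads off that $\int_{-1}^1\tilde v_n^2\to\int_{-1}^1 e^{-y^2}>0$ and takes $c=\inf_n\int_{-1}^1\tilde v_n^2$. Your argument replaces that soft limiting step with a hard, self-contained estimate: you bound $|\tilde v_n''|\le M$ directly from the ODE using the uniform bound $0<\tilde v_n\le1$ and $\mu_n\le M$, and then Taylor with integral remainder gives $\tilde v_n\ge1-\tfrac M2 y^2$, hence $\tilde v_n\ge\tfrac12$ on a fixed interval $[0,a]$. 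What this buys you is an explicit constant $c=\sqrt{a/2}$ and no appeal to a convergence-of-solutions argument; what it gives up is the asymptotic picture (the paper's route shows, in effect, that $|v_n|_{L^2}\sim\pi^{1/4}n^{-1/4}$, so the exponent $-1/4$ is sharp). One microscopic point: for $n=1$ the interval $[0,1]$ is not contained in $[0,\sqrt n)$, but since the final estimate only uses $[0,a]$ with $a\le1/\sqrt M<1$ (as $\mu_n>1$ strictly, so $M>1$), or by continuity of $\tilde v_n''$ up to the endpoint, this causes no difficulty.
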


\begin{proof}
Let us note $\tilde v_n(y) = v_n(y/\sqrt n)$, which is the solution of the Cauchy 
problem $-\tilde v_n'' + y^2 \tilde v_n = \frac{\lambda_n}n\, \tilde v_n$, $\tilde 
v_n(0) = 1$, $\tilde v_n'(0) = 0$. Moreover, $\tilde v_n(\pm \sqrt n) = 0$. Since 
$\lambda_n \sim n$, $\tilde v_n$ converges to the 
solution $\tilde v$ of $-\tilde v'' + y^2 \tilde v = \tilde v$, $\tilde v(0) = 1, 
\tilde v'(0) = 0$, that is $\tilde v(y) = e^{-y^2/2}$, this convergence being uniform 
on every compact subsets of $\set R$.

So, $\int_{-1}^1 \tilde v_n(y)^2 \diff y \xrightarrow[n\to +\infty]{} \int_{-1}^1 
e^{-y^2}\diff y$, and we have $c:= \inf_n \int_{-1}^1 
\tilde v_n(y)^2\diff y >0$. By the change of variables $x = y/\sqrt n$, we have:
\[\int_{-1/\sqrt n}^{1/\sqrt n} v_n(x)^2 \diff x = \frac1{\sqrt n} \int_{-1}^1 
\tilde v_n(y)^2 \diff y \geq \frac c{\sqrt n}\]
and since $\int_{-1}^1 v_n(x)^2 \diff x\geq 
\int_{-1/\sqrt n}^{1/\sqrt n} v_n(x)^2\diff x$, this proves the lemma.
\end{proof}

\subsection{Exponential estimate of the first eigenvalue}
In this subsection, we still normalize $v_\alpha$ so that $v_\alpha(0) = 1$ instead 
of normalizing it in $L^2(-1,1)$. The main result of this section is about refining 
the estimates $\lambda_n\sim n$:
\begin{theorem}\label{th:asymptotic}
  There exists a non-decreasing function $r:(0,\pi/2)\to \set R_+$ and a function
$\gamma$ in $S(r)$ (see Definition \ref{def:symbols}) such that for all reals 
$\alpha>r(0)$, $\lambda_\alpha = \alpha + 
\gamma(\alpha)e^{-\alpha}$.
\end{theorem}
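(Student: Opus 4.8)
The plan is to pass to the rescaled variable $y=\sqrt\alpha\,x$, which turns the eigenvalue problem $-v''+\alpha^2x^2v=\lambda_\alpha v$, $v(0)=1$, $v'(0)=0$, $v(1)=0$ into Weber's equation $-w''+y^2w=\mu w$ on $(0,\sqrt\alpha)$ with $w(0)=1$, $w'(0)=0$, $w(\sqrt\alpha)=0$, where $\mu=\lambda_\alpha/\alpha$. The even solution $w(\cdot\,;\mu)$ of this ODE is entire in $(y,\mu)$ and equals $e^{-y^2/2}$ when $\mu=1$, so the whole problem reduces to solving $w(\sqrt\alpha;\mu)=0$ for $\mu$ near $1$ when $\alpha$ is large, and then showing this root extends holomorphically, with polynomial growth, to complex $\alpha$ in the sectors $U_{\theta,r(\theta)}$.

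First I would record the connection formula $w(\cdot\,;\mu)=c_+(\mu)W_+(\cdot\,;\mu)+c_-(\mu)W_-(\cdot\,;\mu)$, where $W_\pm$ are the solutions of Weber's equation that are recessive/dominant at $+\infty$, normalised so that $W_+(y;\mu)\sim y^{(\mu-1)/2}e^{-y^2/2}$ and $W_-(y;\mu)\sim y^{-(\mu+1)/2}e^{y^2/2}$, with full asymptotic expansions carrying holomorphic, at most polynomially growing remainders on each closed sub-sector $|\arg y|\le\theta/2$, $\theta<\pi/2$ (so on the whole range of $\alpha$ we care about). The coefficients $c_\pm$ are entire in $\mu$, and $w(\cdot\,;1)=e^{-y^2/2}$ forces $c_+(1)=1$, $c_-(1)=0$. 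The key algebraic input is that this zero of $c_-$ is \emph{simple}: computing $c_-(\mu)$ from the Wronskian of $w$ with the recessive parabolic cylinder function, whose value and derivative at $0$ are explicit quotients of Gamma functions, identifies $c_-(\mu)$ up to a nowhere-vanishing entire factor with $1/\Gamma(\tfrac{1-\mu}{4})$; since $1/\Gamma$ has a simple zero at $0$, $c_-'(1)\ne0$, and keeping track of the constants is what will produce $\gamma(\alpha)\sim4\pi^{-1/2}\alpha^{3/2}$.

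Next I would substitute the connection formula into $w(\sqrt\alpha;\mu)=0$ and divide by the dominant factor $\alpha^{-(\mu+1)/4}e^{\alpha/2}$, turning the eigenvalue equation into
\[ c_-(\mu)\bigl(1+\epsilon_-(\alpha,\mu)\bigr)+c_+(\mu)\,\alpha^{\mu/2}e^{-\alpha}\bigl(1+\epsilon_+(\alpha,\mu)\bigr)=0, \]
with $\epsilon_\pm$ holomorphic in $(\alpha,\mu)$ and $O(1/|\alpha|)$ uniformly for $|\arg\alpha|\le\theta$, $|\alpha|$ large, $\mu$ near $1$. Writing $\mu=1+e^{-\alpha}\sigma$ and dividing by $e^{-\alpha}$ recasts this as a fixed-point equation $\sigma=\Psi(\alpha,\sigma)$, holomorphic in both variables and contractive in $\sigma$ as soon as $|\alpha^{1/2}e^{-\alpha}|=|\alpha|^{1/2}e^{-|\alpha|\cos\theta}$ is small — that is, as soon as $|\alpha|>r(\theta)$ for a suitable non-decreasing $r$ with $r(\theta)\to+\infty$ as $\theta\to\pi/2$. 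The analytic implicit function theorem then delivers a unique holomorphic solution $\sigma=\sigma(\alpha)$ on each $U_{\theta,r(\theta)}$ with $\sigma(\alpha)=-\tfrac{c_+(1)}{c_-'(1)}\alpha^{1/2}(1+o(1))=O_\theta(|\alpha|^{1/2})$; for real $\alpha$, uniqueness together with Proposition \ref{th:basic_eigen} (which gives $\lambda_\alpha/\alpha\to1$) identifies $\mu$ with $\lambda_\alpha/\alpha$. Finally $\lambda_\alpha=\alpha+\alpha e^{-\alpha}\sigma(\alpha)$, so $\gamma(\alpha):=\alpha\,\sigma(\alpha)$ is holomorphic on $\bigcup_\theta U_{\theta,r(\theta)}$ with $|\gamma(\alpha)|=O_\theta(|\alpha|^{3/2})$ — hence sub-exponential — on each sector, i.e. $\gamma\in S(r)$, and $\lambda_\alpha=\alpha+\gamma(\alpha)e^{-\alpha}$ as claimed.

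The main obstacle I expect is the uniformity in the sector: proving the asymptotic expansions of $W_\pm$ (or directly of $w(\sqrt\alpha;\mu)$) with remainders that are genuinely holomorphic in $(\alpha,\mu)$ and bounded by $O(1/|\alpha|)$ uniformly on the closed sub-sectors $|\arg\alpha|\le\theta$ — best done by rewriting Weber's equation as a Volterra integral equation along rays — and then feeding this into a quantitative, sector-uniform implicit function theorem so that $\gamma$'s polynomial bound holds on all of $U_{\theta,r(\theta)}$, not just near the positive axis. The non-degeneracy $c_-'(1)\ne0$ is equally essential: were $c_-$ to have a double zero at $1$, the correction would live at scale $e^{-\alpha/2}$ and $\gamma$ would blow up exponentially rather than lie in $S(r)$; fortunately it follows from the simplicity of the zero of $1/\Gamma$ at the origin.
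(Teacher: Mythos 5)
Your proposal is correct, and it reaches the conclusion by a route that is structurally the same as the paper's (rescale by $\sqrt\alpha$, rewrite the eigenvalue condition at $x=1$ as a transcendental equation $\Phi(\alpha,\mu)=0$ analytic in both variables, then solve it by iteration to get a holomorphic branch of the eigenvalue on each sector $U_{\theta,r(\theta)}$) but with a genuinely different technical execution. Where the paper constructs an ad hoc Laplace-type contour integral $\int_{\Gamma_\pm}e^{-yz-z^2/4-(1+\rho/2)\ln z}\,dz$ for the solution, proves its own sector-uniform stationary phase lemma (Proposition \ref{th:stationary_phase}), and solves the implicit equation by a quantitative Newton's method (Theorem \ref{th:newton}), you instead invoke the classical theory of Weber / parabolic cylinder / Kummer functions: the connection formula $w=c_+W_++c_-W_-$ with its standard sector-uniform asymptotics, the explicit $\Gamma$-quotient $c_-(\mu)=\sqrt\pi/\Gamma((1-\mu)/4)$ which makes the simple zero at $\mu=1$ (hence $c_-'(1)=-\sqrt\pi/4\neq0$) transparent and immediately yields $\sigma(\alpha)\sim -c_+(1)\alpha^{1/2}/c_-'(1)=4\pi^{-1/2}\alpha^{1/2}$, and then a contraction / analytic implicit-function-theorem argument to produce and bound the holomorphic branch. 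The two nondegeneracy steps are morally the same object — the paper's lower bound $|\partial_\rho\Phi|\gtrsim|\alpha|^{-1/2}$ from stationary phase is exactly your $c_-'(1)\neq0$ — but your Gamma-function identity makes it visible by inspection rather than by estimation. What your approach buys is that the heaviest analysis (sector-uniform asymptotics with holomorphic remainders) is outsourced to well-documented special-function facts and the algebra of $\Gamma$, so the proof is shorter to state; what the paper's self-contained approach buys is explicit control on every error term, which it then re-uses to extract the full asymptotic expansion $\gamma(\alpha)=\sum a_k\alpha^{3/2-k}$ mentioned in the remark and to keep the whole argument independent of outside references.
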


\begin{remark}
\begin{itemize}
 \item This is a semi-classical problem with $h = \frac1\alpha$. The asymptotic 
expansion 
of $\lambda_\alpha$ was already known for $\alpha$ 
real (see for instance \cite{nier_quantitative_2004}), but the estimate in our 
result is also valid for $\alpha$ complex, which was not known before (as far as the 
author knows).
 \item  We we will also prove that for all $\theta$ in $(0,\pi/2)$: 
 \[\gamma(\alpha) \underset{\substack{|\alpha|\to\infty\\ \alpha\in 
U_{\theta,r(\theta)}}}{\sim} 
4\pi^{-1/2}\alpha^{3/2}.\]
A careful examination of the proof even shows that we have an 
asymptotic expansion of the form $\gamma(\alpha) = \sum_{k\geq 0} a_k 
\alpha^{3/2-k}$, this 
expansion being valid in each $U_{\theta,r(\theta)}$, and where the $a_k$ can be in 
principle computed explicitly.
\end{itemize}

\end{remark}

\begin{proof}
  The proof is in three steps. We first explicitly solve the equation satisfied 
by $v_\alpha$ for $\alpha>0$, expressing the solution as an integral on some 
complex path. Then, writing the boundary condition for this explicit solution 
constitute an implicit equation satisfied by $\alpha$ and $\rho_\alpha = 
\lambda_\alpha - \alpha$, this equation still making sense if $\alpha$ is 
complex with positive real part. We use Newton's method to solve this implicit 
equation, with the stationary phase theorem providing the necessary estimates 
for Newton's method to converge. Finally, the stationary phase theorem also 
implies an 
equivalent of the solution Newton's method gives us, which will allow us to 
conclude.
  
  \paragraph{Explicit solution of the equation satisfied by $v_\alpha$}
  Let us recall that $v_\alpha$ satisfies $-v_\alpha''+(\alpha x)^2 v_\alpha = 
\lambda_\alpha v_\alpha$. We have by choice of normalization $v_\alpha(0) = 1$, 
and since $v_\alpha$ is even $v_\alpha'(0) = 0$. Let $w_\alpha$ be defined by 
$v_\alpha = e^{-\alpha x^2/2} w_\alpha$. By developing the derivatives, we have: 
$-w_\alpha'' + 2\alpha x w_\alpha' = (\lambda_\alpha - \alpha)w_\alpha$. 
Finally, we make the change of variables $x = y/\sqrt{\alpha}$, so that $\tilde 
w_\alpha(y) = w_{\alpha}(y/\sqrt\alpha)$ satisfies $-\tilde w_\alpha'' +2y 
\tilde w_\alpha' = (\frac1\alpha\lambda_\alpha-1)\tilde w_\alpha$ as well as 
$\tilde w_\alpha(0) = 1$, $\tilde w_\alpha'(0) = 0$, $\tilde w_\alpha(\sqrt 
\alpha) = 0$. So, for all real $\tilde \rho$, we consider the ordinary 
differential equation:
  \begin{equation}\label{eq:w}
  \begin{aligned}
    -\tilde w'' + 2x \tilde w' - \tilde \rho \tilde w = 0\\
    \tilde w(0) = 1, \tilde w'(0) = 0.
  \end{aligned}
  \end{equation}
  
  Let $g(z) = e^{- z^2/4 -(1+\tilde\rho/2)\ln(z)}$ (with the logarithm chosen 
so that $\ln(1) = 0$ and $\ln$ is continuous on the path we will integrate $g$ 
on). This function satisfies $-z^2 g -2(zg)' -\tilde \rho g = 0$ on any simply 
connected domain of $\set C^\star$. Let $\Gamma_+$ and $\Gamma_-$ be paths in 
$\set C^\star$ from $-\infty$ to $\infty$ going above and below $0$ 
respectively. For instance, we can take $\Gamma_+ = 
(-\infty,-\epsilon]\cup\{\epsilon e^{i(\pi-\theta)},0\leq \theta\leq \pi\}\cup 
[\epsilon,+\infty)$ and $\Gamma_- =(-\infty,-\epsilon]\cup\{\epsilon 
e^{i\theta},-\pi\leq \theta\leq 0\}\cup [\epsilon,+\infty)$ for some 
$\epsilon>0$. Then, by integration by parts, the functions $\tilde w_+$ and 
$\tilde w_-$ defined by $\tilde w_\pm(y) = \int_{\Gamma_\pm} g(z)e^{-yz}\diff z$ 
are solutions of the equation $-\tilde w'' + 2y\tilde w' = \tilde\rho \tilde w$. 
When $\rho <2$, these solutions satisfies:
\begin{align*}
\tilde w_\pm '(0) &= -\int_{\Gamma_\pm} ze^{z^2/4-(1+\tilde\rho/2)\ln(z)}\diff 
z\\
&= - (1+e^{\mp i\frac\pi2\tilde\rho})\int_0^{+\infty}e^{-x^2/4-\tilde \rho/2 
\ln(x)}\diff x.
\end{align*}
Finally, when $\rho<2$, the solution of the equation \eqref{eq:w} 
is up to a constant:
  \begin{equation*}
  \tilde w(y) = 
    \left(
     \big(1+e^{i\frac\pi2 \tilde\rho}\big)\int_{\Gamma_+} -
     \big(1+e^{-i\frac\pi2 \tilde\rho}\big)\int_{\Gamma_-}\right)
     \exp\left(
      -yz -\frac{z^2}4-
      \left(1+\frac{\tilde\rho}2\right)\ln z\right)
    \diff z,
  \end{equation*}
  where we have defined $(a_+\int_{\Gamma_+}+a_-\int_{\Gamma_-})f(s) \diff s = a_+ \int_{\Gamma_+}f(s) \diff s + a_-\int_{\Gamma_-}f(s) \diff s$.
  
  \paragraph{Implicit equation and Newton's method}
  In the case where $\tilde\rho = \tilde\rho(\alpha) := \frac1\alpha\lambda_\alpha 
-1$, the above solution is up to a constant $\tilde w_\alpha$, so $\alpha$ and 
$\tilde \rho(\alpha)$ satisfy $\tilde w(\sqrt\alpha) = 0$ when $\tilde\rho = 
\tilde\rho(\alpha)$. So, let us specify in the above solution $y = \sqrt\alpha$ and 
make the change of variables/change of integration path $z = \sqrt\alpha s$, and 
write $-yz - z^2/4 = -\alpha(1+s/2)^2 + \alpha$:
  \begin{equation*}
  \begin{aligned}
    \tilde w(\sqrt \alpha) = \sqrt{\alpha}e^\alpha\left(
    \big(1+e^{i\frac\pi2 \tilde\rho}\big)\int_{\Gamma_+} -
    \big(1+e^{-i\frac\pi2\tilde\rho}\big)\int_{\Gamma_-}\right)\mkern100mu\\
      \exp\left(
      -\alpha\left(1+\frac s2\right)^2 -
      \left(1+\frac{\tilde\rho}2\right)\ln s\right)
      \diff s.
  \end{aligned}
  \end{equation*}

  So, letting $\Phi(\rho,\alpha)$ be defined by:
  \begin{equation*}
  \begin{aligned}
    \Phi(\rho,\alpha) = \left(
    \big(1+e^{i\frac\pi2 \rho}\big)\int_{\Gamma_+} -
    \big(1+e^{-i\frac\pi2\rho}\big)\int_{\Gamma_-}\right)\mkern100mu\\
      \exp\left(
      -\alpha\left(1+\frac s2\right)^2 -
      \left(1+\frac{\rho}2\right)\ln s\right)
      \diff s,
  \end{aligned}
  \end{equation*}
  and assuming $\tilde \rho(\alpha) <2$, we have $\Phi(\tilde\rho(\alpha),\alpha) = 
0$. When $|\rho| <2$, we even have the equivalence between 
$\Phi(\rho,\alpha) = 0$ and
$\alpha(1+\rho)$ being an eigenvalue of $-\partial_x^2 + (\alpha x)^2$ on 
$(-1,1)$.

Note that the equation $\Phi(\rho,\alpha) 
= 0$ still makes sense if we take $\alpha$ with positive real part, and, as 
stated previously, we want to solve it with Newton's method (Theorem 
\ref{th:newton}). In order to prove the convergence of Newton's method on 
suitable sets (i.e. for each $\theta\in(0,\pi/2)$, a set 
$U_{\theta,r(\theta)}$), we need to estimate $(\partial_\rho \Phi)^{-1}$, 
$\partial_\rho^2 \Phi$, and $\Phi(0,\alpha)$; in particular, we will show that 
the later decays faster than the two former as $|\alpha|$ tends to $+\infty$.

  By differentiating under the integral we have:
  \begin{equation*}
  \begin{split}
    \partial_\rho\Phi(\rho,\alpha) = 
    i\frac\pi2 \left(e^{i\frac\pi2 \rho}\int_{\Gamma_+} + e^{-i\frac\pi2\rho}\int_{\Gamma_-}\right)
      \exp\left(
      -\alpha\left(1+\frac s2\right)^2 -
      \left(1+\frac{\rho}2\right)\ln s\right)
      \diff  s\\
    -\frac12
      \left((1+e^{i\frac\pi2\rho})\int_{\Gamma_+} -
      (1+e^{-i\frac\pi2\rho})\int_{\Gamma_-}\right)
      \exp\left(
      -\alpha\left(1+\frac s2\right)^2 -
      \left(1+\frac{\rho}2\right)\ln s\right)\ln s
      \diff  s\\
  \end{split}
  \end{equation*}
  so, by the stationary phase theorem, with the only critical point being $-2$ 
(see Proposition \ref{th:stationary_phase}):
  \begin{align}
      \partial_\rho\Phi(\rho,\alpha) &=\notag
      \sqrt{\frac\pi\alpha}\bigg( i\frac\pi2
       \left( e^{i\frac\pi2\rho}e^{-(1+\rho/2)(\ln2 + i\pi)}
        + e^{-i\frac\pi2\rho}e^{-(1+\rho/2)(\ln2-i\pi)}\right)\\*
      &\notag\qquad -\frac12 \big(
       (1+e^{i\frac\pi2\rho})e^{-(1+\rho/2)(\ln2 + i\pi)}(\ln2+i\pi)\\*
      &\notag\qquad - 
(1+e^{-i\frac\pi2\rho})e^{-(1+\rho/2)(\ln2-i\pi)}(\ln2-i\pi)\big)\bigg)\\*
      &\notag\qquad + \mathcal O_{\alpha\in 
U_{\theta,1}}(|\alpha|^{-3/2})\\
      \label{eq:partial_phi}\begin{split}&= i\sqrt{\frac\pi\alpha} 2^{-(1+\frac\rho2)}\left(
       \pi \cos\left(\frac{\pi\rho}2\right) - 
       \ln(2)\sin\left(\frac{\pi\rho}2\right)
       \right)\\*
      &\qquad+ \mathcal O_{\alpha\in U_{\theta,1}}(|\alpha|^{-3/2}),\end{split}
  \end{align}
  the $\mathcal O$ being uniform in $|\rho|\leq 1$.
  
  If $\rho_{\max}>0$ is chosen so that for all $|\rho|\leq 
\rho_{\max}$, $|\pi\cos(\pi\rho/2) - \ln(2)\sin(\pi\rho/2)|\geq \frac\pi2$, 
there exists $C'_\theta>0$ and $r(\theta)>0$ such that for all 
$|\rho|<\rho_{\max}$ and $\alpha$ in $U_{\theta,r(\theta)}$, 
$|(\partial_{\rho}\Phi(\rho,\alpha))^{-1}| < C'_\theta{\sqrt{|\alpha|}}$.
  
  Similarly, $\partial_\rho^2\Phi(\rho,\alpha)$ can be expressed in terms of $\int_{\Gamma_\pm} \exp(-\alpha(1+s/2)^2 - {(1+\rho/2)\ln(s)}) \ln(s)^m \diff s$ with $m\in\{0,1,2\}$, so, by the stationary phase theorem, increasing $r(\theta)$ if need be, there exists $C_\theta>0$ such that for all $|\rho|<\rho_{\max}$ and $\alpha$ in $U_{\theta,r(\theta)}$, $|\partial_{\rho}^2\Phi(\rho,\alpha)| < C_\theta\frac1{\sqrt{|\alpha|}}$.

  Now, by explicitly writing the integrals defining $\Phi$, we have for all $\alpha$ with $\Re(\alpha)>0$:
  \begin{align*}
    \Phi(0,\alpha) &= 
     2\int_{\Gamma_+} \exp\Big( 
     -\alpha\big(1+\frac s2\big)^2\Big)\frac1s\diff s -
     2\int_{\Gamma_-} \exp\Big( 
     -\alpha\big(1+\frac s2\big)^2\Big)\frac1s\diff s\\
     &=2\lim_{\epsilon\to 0} 
     \int_\pi^{-\pi} \exp\Big(
     -\alpha\big(1+\frac12\epsilon e^{i\theta}\big)^2\Big)
     i\diff \theta\\
     &= -4i\pi e^{-\alpha},
  \end{align*}
  so, increasing again $r(\theta)$ if necessary, we have for all $\alpha$ in $U_{\theta,r(\theta)}$:
 
  \[|\Phi(0,\alpha)|\leq \min((2C_\theta 
{C'_\theta}^2)^{-1},\frac15{C'_\theta}^{-1})|\alpha|^{-1/2}.\]

  Then according to Theorem \ref{th:newton}, with $R = \rho_{\max}/10$, $C_1 = 
C_\theta|\alpha|^{-1/2}, C_2=C'_\theta|\alpha|^{1/2}$ and with starting point $z_0 = 
0$, the sequence $(\tilde \rho_n(\alpha))$ defined by $\tilde\rho_0(\alpha) = 0$, 
$\tilde\rho_{n+1}(\alpha) = \tilde\rho_n(\alpha) - 
\partial_\rho\Phi(\tilde\rho_n(\alpha),\alpha)^{-1}\Phi(\tilde\rho_n(\alpha),\alpha)$ 
converges and the limit $\tilde\rho_\infty(\alpha)$ satisfies 
$|\tilde\rho_\infty(\alpha) - \tilde\rho_k(\alpha)|\leq 
C|A\sqrt{\alpha}e^{-\alpha}|^{2^k}$ for some $C>0$ and $A>0$.

  \paragraph{Equivalent of the solution and conclusion}
  Let us first prove that $\tilde\rho_\infty$ is holomorphic.  By induction, every 
$\tilde\rho_k$ is holomorphic, and the estimate $|\tilde\rho_\infty(\alpha) - 
\tilde\rho_k(\alpha)|\leq C|A\sqrt\alpha e^{-\alpha}|^{2^k}$ shows that 
$\tilde\rho_k$ converges 
uniformly in $U_{\theta,r(\theta)}$ (provided that $r(\theta)>0$), so 
$\tilde\rho_\infty$ is also holomorphic.

  Now let us compute an equivalent of $\tilde\rho_\infty$. According to the previous 
estimate with $k = 1$, we have $\tilde\rho_\infty(\alpha) = 
-\partial_\rho\Phi(0,\alpha)^{-1}\Phi(0,\alpha) + \mathcal O(e^{-2\alpha})$ for 
$\alpha \in U_{\theta,r(\theta)}$. Thanks to the stationary phase theorem, or more 
specifically equation \eqref{eq:partial_phi}, we have: $\partial_\rho \Phi(0,\alpha) 
= i{\pi^{3/2}}\alpha^{-1/2} + \mathcal O_{\alpha\in 
U_{\theta,r(\theta)}}(|\alpha|^{-3/2})$, and $\Phi(0,\alpha) = -4i\pi e^{-\alpha}$. 
So, we have: $\tilde\rho_\infty(\alpha) = 4\pi^{-1/2} \alpha^{1/2}e^{-\alpha}(1 + 
\mathcal O(|\alpha|^{-1}))+ \mathcal O(e^{-2\alpha})$, and since $e^{-2\alpha} = 
\mathcal O(|\alpha|^{-1/2}e^{-\alpha})$ for $\alpha\in U_{\theta,r(\theta)}$, we 
finally have $\tilde\rho_\infty(\alpha) \sim 4\pi^{-1/2}\alpha^{1/2}e^{-\alpha}$ for 
$\alpha \in U_{\theta,r(\theta)}$.

  We still have to check that for $\alpha$ real, $\tilde\rho_\infty(\alpha)$ is 
equal to $\tilde \rho_\alpha$ (let us remind that $\lambda_\alpha = 
\alpha(1+\tilde\rho_\alpha)$). According to equation \eqref{eq:partial_phi}, we 
have for some $C''_\theta>0$ and for all $|\rho|\leq \rho_{\max}$ and $\alpha 
\in U_{\theta,r(\theta)}$: $\Im(\partial_\rho \phi(\rho,\alpha))\geq 
C''_\theta/\sqrt{|\alpha|}$. So for $|\rho|<\rho_{\max}$ and $\alpha\in 
U_{\theta,r(\theta)}$, $|\Phi(\rho,\alpha)| = |\Phi(\rho,\alpha) - 
\Phi(\tilde\rho_\infty(\alpha))| \geq C''_\theta |\rho - 
\tilde\rho_\infty(\alpha)|/\sqrt{|\alpha|}$. So for $\alpha$ real big enough, 
$\tilde\rho_\infty(\alpha)$ is the smallest non-negative zero of 
$\Phi(\cdot,\alpha)$.
  
  So $\tilde\rho_\infty$ is the smallest positive eigenvalue of $-\partial_x^2 
+ (\alpha x)^2$, and since these eigenvalues are all positive (Proposition
\ref{th:basic_eigen}), we actually have $\lambda_\alpha = 
\alpha(\tilde\rho_\infty(\alpha) + 1)$.
\end{proof}

\begin{remark}
 This proof is the one we are not (yet?) able to carry if we replace in the Grushin 
equation \eqref{eq:grushin} the potential $x^2$ by $a(x) = x^2+x^3b(x)$ where $b$ 
is any non null analytic function. Indeed, the proof above relies on an exact 
integral representation of the solution of $-v'' + a(x) v = \lambda v$, which is 
impossible in general if $b\neq  0$.
\end{remark}

\subsection{Agmon estimate for the first eigenfunction}
Thanks to the Theorem \ref{th:asymptotic}, we can define $\lambda_\alpha$ for $\alpha 
\in \bigcup U_{\theta,r(\theta)}$ by $\lambda_\alpha = \alpha + \gamma(\alpha) 
e^{-\alpha}$, and $v_\alpha$ as the solution of $-v_\alpha'' + (\alpha x)^2 v_\alpha 
= \lambda_\alpha v_\alpha$, $v_\alpha(0) = 1, v'(\alpha) = 0$. As a solution of an 
ordinary differential equation that depends analytically on a parameter $\alpha$, 
$v_\alpha(x)$ depends analytically on $\alpha$, and we have thus $v_\alpha(\pm 1) = 
0$. We now prove some estimates on $v_\alpha(x)$, in the form of the following 
proposition:
\begin{prop}\label{th:est_agmon}
Let $1\geq \epsilon>0$ and ${^\epsilon w(x)} (\alpha) = 
e^{\alpha(1-\epsilon)x^2/2}v_\alpha(x)$. There exists a non-decreasing function 
$r:(0,\pi/2)\to \set R_+$ such that 
${^\epsilon w}$ is bounded from $[-1,1]$ to $S(r)$.
\end{prop}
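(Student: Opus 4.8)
The plan is to deduce the estimate from a Volterra (Neumann series) analysis of the ordinary differential equation satisfied by $w_\alpha:=e^{\alpha x^2/2}v_\alpha$, exploiting that the defect $\rho_\alpha:=\lambda_\alpha-\alpha$ is \emph{exponentially} small on each sector $U_{\theta,r(\theta)}$, which is exactly the content of Theorem~\ref{th:asymptotic}. First I would reduce to estimating $w_\alpha$ alone: since ${}^\epsilon w(x)(\alpha)=e^{\alpha(1-\epsilon)x^2/2}v_\alpha(x)=e^{-\alpha\epsilon x^2/2}\,w_\alpha(x)$ and $|e^{-\alpha\epsilon x^2/2}|=e^{-\epsilon x^2\Re(\alpha)/2}\leq1$ as soon as $\Re(\alpha)\geq0$, it suffices to produce a non-decreasing $r$ for which $\alpha\mapsto w_\alpha(x)$ is holomorphic on $\bigcup_\theta U_{\theta,r(\theta)}$ and $\sup_{x\in[-1,1]}|w_\alpha(x)|$ has sub-exponential growth on each $U_{\theta,r(\theta)}$; the same $r$ and the same bounds then serve all $\epsilon\in(0,1]$ at once. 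Recall from the proof of Theorem~\ref{th:asymptotic} that $w_\alpha$ solves $-w_\alpha''+2\alpha x\,w_\alpha'=\rho_\alpha w_\alpha$ with $w_\alpha(0)=1$, $w_\alpha'(0)=0$, where $\rho_\alpha=\gamma(\alpha)e^{-\alpha}$ for the symbol $\gamma\in S(r_\gamma)$ furnished by that theorem; $w_\alpha$ is even in $x$, so I restrict to $x\in[0,1]$. Multiplying by $e^{-\alpha x^2}$ and integrating twice from $0$ turns this into the Volterra equation $w_\alpha=1-\rho_\alpha T_\alpha[w_\alpha]$ with $T_\alpha[\phi](x)=\int_0^x e^{\alpha s^2}\big(\int_0^s e^{-\alpha t^2}\phi(t)\,dt\big)\,ds$, hence $w_\alpha=\sum_{k\geq0}(-\rho_\alpha)^k T_\alpha^k[1]$.

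The core of the argument is a crude bound on the iterates. Fix $\theta\in(0,\pi/2)$; on $U_{\theta,r(\theta)}$ one has $\Re(\alpha)\geq|\alpha|\cos\theta>0$, so $|\rho_\alpha|=|\gamma(\alpha)|e^{-\Re(\alpha)}\leq p_{\theta,\cos(\theta)/2}(\gamma)\,e^{-|\alpha|\cos(\theta)/2}\to0$ as $|\alpha|\to\infty$; enlarging $r$ (keeping it non-decreasing and $\geq r_\gamma$) I may assume $|\rho_\alpha|\leq1$ on every $U_{\theta,r(\theta)}$. For $x\in[0,1]$, $T_\alpha^k[1](x)$ is the integral over the simplex $\{x\geq s_1\geq t_1\geq\dots\geq s_k\geq t_k\geq0\}$ of $\exp\big(\alpha\sum_{j=1}^k(s_j^2-t_j^2)\big)$, and the exponent rearranges as $s_1^2-(t_1^2-s_2^2)-\dots-(t_{k-1}^2-s_k^2)-t_k^2\in[0,s_1^2]\subseteq[0,1]$, so the integrand has modulus $\leq e^{\Re(\alpha)x^2}\leq e^{\Re(\alpha)}$, while the simplex has volume $x^{2k}/(2k)!$. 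Hence $|T_\alpha^k[1](x)|\leq e^{\Re(\alpha)}/(2k)!$ for $k\geq1$, and, using $|\rho_\alpha|\leq1$,
\[
\sup_{x\in[-1,1]}|w_\alpha(x)|\;\leq\;1+e^{\Re(\alpha)}\sum_{k\geq1}\frac{|\rho_\alpha|^k}{(2k)!}\;\leq\;1+|\rho_\alpha|\,e^{\Re(\alpha)}\;=\;1+|\gamma(\alpha)|,
\]
which has sub-exponential growth on $U_{\theta,r(\theta)}$ because $\gamma\in S(r)$.

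For the holomorphy, each $\alpha\mapsto T_\alpha^k[1](x)$ is holomorphic on $\bigcup_\theta U_{\theta,r(\theta)}$ (an integral over a fixed compact of functions holomorphic in $\alpha$), and on any compact $K$ of this domain the series is dominated by $\sum_k(\sup_K|\rho_\alpha|)^k e^{\sup_K\Re(\alpha)}/(2k)!<\infty$, so it converges locally uniformly; thus $\alpha\mapsto w_\alpha(x)$, and then $\alpha\mapsto{}^\epsilon w(x)(\alpha)=e^{-\alpha\epsilon x^2/2}w_\alpha(x)$, are holomorphic there. Combined with $|e^{-\alpha\epsilon x^2/2}|\leq1$ and the previous display, this gives ${}^\epsilon w(x)\in S(r)$ with $p_{\theta,\epsilon'}({}^\epsilon w(x))\leq1+p_{\theta,\epsilon'}(\gamma)$ for every $\theta\in(0,\pi/2)$ and every $\epsilon'>0$, a bound independent of $x\in[-1,1]$ and of $\epsilon\in(0,1]$; hence ${}^\epsilon w$ is bounded from $[-1,1]$ into $S(r)$.

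The main obstacle is conceptual rather than computational: a naive Gronwall estimate on $w_\alpha$ (or directly on ${}^\epsilon w(x)$) only yields growth $e^{O(|\alpha|)}$, which is \emph{not} sub-exponential, since the coefficient $2\alpha x$ forces the growing WKB mode. One has to observe that this growing mode enters $w_\alpha$ only through the prefactor $\rho_\alpha$, and that $\rho_\alpha=\gamma(\alpha)e^{-\alpha}$ is genuinely exponentially small on each $U_{\theta,r(\theta)}$ precisely because there $\Re(\alpha)$ and $|\alpha|$ are comparable; it is this exact cancellation of the $e^{\Re(\alpha)}$ coming from the Volterra iterates against the $e^{-\Re(\alpha)}$ inside $\rho_\alpha$ that makes the bound close, and it is the only point where Theorem~\ref{th:asymptotic} is invoked.
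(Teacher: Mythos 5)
Your proof is correct, but it takes a genuinely different route from the paper. The paper's argument is an Agmon-type energy estimate on $w_\alpha := e^{\alpha(1-\epsilon)x^2/2}v_\alpha$: multiplying the ODE it satisfies by $h^2 e^{-i\theta}\bar w_\alpha$ (with $\alpha = h^{-1}e^{i\theta}$), integrating and taking real parts gives the identity \eqref{eq:agmon_est}, in which the effective potential $\delta^2 x^2$ with $\delta^2 = 1-(1-\epsilon)^2 > 0$ is nonnegative outside a set of size $O(h^{1/2})$; the $L^2$-mass on that small set is controlled by comparison with the exact Gaussian $e^{-\epsilon\alpha x^2/2}$ (estimate \eqref{eq:comp_eigenvectors}), and Cauchy--Schwarz on $w_\alpha'$ closes the argument with a pointwise bound $O(|\alpha|^{3/4})$. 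That route uses only the \emph{boundedness} of $\rho(\alpha)$ and genuinely requires $\epsilon>0$, since at $\epsilon=0$ the potential degenerates. Your Volterra/Neumann series for the fully conjugated function $e^{\alpha x^2/2}v_\alpha$, by contrast, works directly at $\epsilon=0$ and is more elementary (no energy identity, no comparison lemma): the growth $e^{\Re\alpha}$ of the iterates $T_\alpha^k[1]$ is compensated exactly by the $e^{-\Re\alpha}$ inside $\rho_\alpha=\gamma(\alpha)e^{-\alpha}$, so you invoke the \emph{exponential} smallness from Theorem~\ref{th:asymptotic} in an essential way, which the paper's proof does not. The trade-off is a slightly weaker pointwise bound, $1+|\gamma(\alpha)| = O(|\alpha|^{3/2})$ rather than $O(|\alpha|^{3/4})$, and less robustness: the energy argument would survive with $\rho$ merely bounded (for instance for higher eigenvalues of $-\partial_x^2+(\alpha x)^2$), whereas yours is tied to the spectral gap being exponentially small. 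Both bounds are polynomial, hence sub-exponential, which is all Proposition~\ref{th:est_agmon} requires, so your argument closes cleanly.
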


\begin{remark}
  The boundedness of $^\epsilon w$ in the statement of the Proposition 
\ref{th:est_agmon} is to be understood as the boundedness of the subset 
$\{{^\epsilon w(x)},x\in[-1,1]\}$ of $S(r)$, which is equivalent to the fact that for 
all seminorms $p_{\epsilon',\theta}$, the set $\{p_{\epsilon',\theta}({^\epsilon 
w(x)}),x\in[-1,1]\}$ is a bounded set of $\set R$.
\end{remark}
\begin{proof}
  This is mostly a complicated way of stating Agmon's estimate (see for instance 
Agmon's initial work \cite{agmon_lectures_1982} or Helffer and Sjöstrand's 
article \cite{helffer_multiple_1984}, the later being closer to what we are 
doing).
  
  Let $\theta_0\in(0,\pi/2)$. We will prove that there exists $C>0$ and 
$r'(\theta_0)$ such that for all $\alpha \in U_{\theta_0,r'(\theta_0)}$ and 
$x\in (-1,1)$, ${^\epsilon w(\alpha)(x)} \leq C |\alpha|^{3/4}$, which is enough 
to prove the stated proposition. In this proof, we will just note $w$ instead of 
${^\epsilon w}$, and for convenience, we will note $w_\alpha(x)$ instead of 
$w(x)(\alpha)$.
  
  For all $\alpha$, we have: $-w_\alpha'' +2\alpha{(1-\epsilon)}x 
w_\alpha'+\big({(1-(1-\epsilon)^2)}(\alpha x)^2 - 
\epsilon\alpha-\rho(\alpha)\big) w_\alpha = 0$. Let us write $\alpha = \frac1h 
e^{i\theta}$, $\delta^2 = 1-(1-\epsilon)^2$, and multiply the previous equation 
by $h^2e^{-i\theta}\bar w_\alpha$. We get: 
\[-h^2e^{-i\theta}w_\alpha''\bar w_\alpha +2h(1-\epsilon)xw_\alpha' \bar w_\alpha + 
\big(e^{i\theta}\delta^2 x^2 - h(\epsilon + 
he^{-i\theta}\rho(\alpha))\big)|w_\alpha(x)|^2 = 0.\]

By integration 
by parts, we have $-\int_{-1}^1 w_\alpha''(x) \bar w_\alpha(x) \diff x = 
\int_{-1}^1|w_\alpha'(x)|^2 \diff x$ and since $2\Re(w_\alpha'\bar w_\alpha) = 
\frac{\diff}{\diff x}|w_\alpha|^2$, we have $2\!\int_{-1}^1 x 
\Re(w_\alpha'(x)\bar 
w_\alpha(x))\diff x = -\int_{-1}^1 |w_\alpha(x)|^2 \diff x$, so integrating the 
equation and taking the real part, we get the Agmon estimate, valid for all 
$\alpha$ such that $\rho(\alpha)$ is defined:
  \begin{equation}\label{eq:agmon_est}
    h^2\int_{-1}^1|w_\alpha'(x)|^2\diff x + \int_{-1}^1 \left( \delta^2 x^2 
-h\frac{1+h\Re(e^{-i\theta}\rho(\alpha))}{\cos(\theta)}\right)|w_\alpha(x)|^2 \diff x 
= 0.
  \end{equation}
  
  The final ingredient we need to conclude is a comparison between $w_\alpha$ and $e^{-\epsilon \alpha x^2/2}$, which will give us a control of the $L^2$ norm of $w_\alpha$ on sets of the form $(-R|\alpha|^{-1/2},R|\alpha|^{-1/2})$. The function $\tilde w$ defined by $\tilde w(z) = e^{-\epsilon z^2/2}$ satisfies $-\tilde w'' + 2(1-\epsilon) z \tilde w' +\delta^2z^2\tilde w - \epsilon \tilde w = 0$ for $z$ in $\set C$, so the solution $\tilde w_\rho$ of $-\tilde w_\rho'' +2(1-\epsilon)z \tilde w_\rho' + \delta^2 z^2 \tilde w_\rho = (\epsilon+\rho)\tilde w_\rho$ tends to $e^{-\epsilon z^2/2}$ in $L^2(D(0,R))$ as $\rho$ tends to $0$. So, for all $R>0$, there exists $\rho_{\max}$ such that for $|\rho|\leq \rho_{\max}$, $|\tilde w_\rho - e^{-\epsilon z^2/2}|_{L^2(D(0,R))} \leq 1$. But $w_\alpha(x) = \tilde w_{\rho(\alpha)/\alpha}(\sqrt\alpha x)$, so, if $\rho(\alpha)/\alpha \leq \rho_{\max}$, we have $|w_\alpha - e^{-\epsilon\alpha x^2/2}|_{L^2(|x|\leq R/\sqrt{|\alpha|})} \leq |\alpha|^{-1/2}$. So, there exists $r'(\theta_0)\geq r(\theta_0)$ such that for all $\alpha$ in $U_{\theta_0,r'(\theta_0)}$:
  \begin{equation}\label{eq:comp_eigenvectors}
  |w_\alpha|_{L^2(-R/\sqrt{|\alpha|},R/\sqrt{|\alpha|})}\leq C_{\epsilon,\theta_0} 
|\alpha|^{-1/4}.
  \end{equation}
  
  Let $E = \{x\in (-1,1), \delta^2 x^2 -2h/\cos(\theta_0)\leq 0\} = \{|x|\leq \sqrt{2}/(\delta \sqrt{\cos(\theta_0)}) |\alpha|^{-1/2}\}$ and $\alpha = \frac1h e^{i\theta}$ in $U_{\theta_0,r'(\theta_0)}$. We have $|h\Re(e^{-i\theta}\rho(\alpha))|\leq 1$ and $|\theta|<\theta_0$, so, for $x$ in $[-1,1]\setminus E$, $\delta^2 x^2 -h(1+h\Re(e^{-i\theta}\rho(\alpha)))/\cos(\theta)>0$. Thus, thanks to Agmon estimate \eqref{eq:agmon_est}, $h^2|w_\alpha'|^2_{L^2(-1,1)} \leq C'_{\theta_0}|w_\alpha|^2_{L^2(E)}$. But, thanks to inequality \eqref{eq:comp_eigenvectors}, we have $|w_\alpha|_{L^2(E)}^2 \leq C_{\epsilon,\theta_0} h^{1/2}$, so $|w_\alpha'|_{L^2(-1,1)}^2 \leq C'_{\epsilon,\theta_0} h^{-3/2}$.
  
  Finally, for all $x$ in $(-1,1)$, we have: ${|w_\alpha(x) - w_\alpha(0)|}\leq |w_\alpha'|_{L^1(-1,1)}$, so thanks to Hölder's inequality, $|w_\alpha(x) - w_\alpha(0)| \leq \sqrt2|w_\alpha '|_{L^2(-1,1)}\leq \sqrt{2C'_{\epsilon,\theta_0}} h^{-3/4}$.
\end{proof}

\subsection{Proof of Lemma \ref{th:lemma1}}\label{sec:proof_lemma}
We prove here Lemma \ref{th:lemma1}. To bound from above $\left|\sum v_n(x) a_n 
z^{n-1} |\zeta|^{\rho_n}\right|$, the idea is to apply Theorem \ref{th:est_default}, 
with Theorems \ref{th:asymptotic} and \ref{th:est_agmon} providing the required 
hypotheses.

First, in order to apply Theorem \ref{th:est_default}, we define some symbols.
Let $\gamma\in S(r_1)$ obtained by Theorem \ref{th:asymptotic}, and 
$v:(-1,1)\to S(r_2)$ the function obtained by Proposition \ref{th:est_agmon} 
with $\epsilon = 1$. By taking $r = \max(r_1,r_2)$, we can assume that 
$\gamma\in S(r)$ and that $v$ take its values in $S(r)$. This $v$ is 
still bounded (see Proposition \ref{th:change_domain}, first item). Finally, for 
$\zeta\in \mathcal D$ and $x\in(-1,1)$, let $\gamma_{\zeta,x}$ defined 
by\footnote{We remind that $\mathcal D = \{e^{-T}<|z|<1, \arg(z) \in \omega_y\}$, 
see 
Section \ref{sec:toy_model} and figure \ref{fig:model}. Also, $U = \{0<|z|<1, 
\arg(z)\in \omega_y\}$ and $U^\delta = \{z, \distance(z,U)\in \omega_y\}$.}:
\[\gamma_{\zeta,x}(\alpha) = v(x)(\alpha)|\zeta|^{\rho(\alpha)},\]
so that:
\begin{equation}\label{eq:lemma}\sum_{n> r(0)} v_n(x) a_n z^{n-1}|\zeta|^{\rho_n} = 
\frac1zH_{\gamma_{\zeta,x}}\left(\sum_{n>r(0)} a_n z^{n}\right).
\end{equation}

We then show that the family $(\gamma_{\zeta,x})_{\zeta\in \mathcal D, x\in(-1,1)}$ 
is in $S(r)$, and is bounded. We already know that $(v(x))_{x\in(-1,1)}$ 
is a bounded family in $S(r)$. Since the multiplication is continuous in $S(r)$ 
(Proposition \ref{th:prop_symbols}), to prove that $(\gamma_{\zeta,x})$ is a bounded 
family, it is enough to prove that $(|\zeta|^{\rho})_{\zeta\in \mathcal D}$ is a
bounded family of $S(r)$.

Since $\rho(\alpha) = e^{-\alpha} \gamma(\alpha)$ with $\gamma$ having 
sub-exponential growth (by definition of $S(r)$), $|\rho(\alpha)|$ is bounded on 
every 
$U_{\theta,r(\theta)}$ by some $c_\theta$. So, we have for $\zeta\in \mathcal D$ and 
$\alpha\in U_{\theta,r(\theta)}$:
\begin{equation*}
 ||\zeta|^{\rho(\alpha)}| \leq e^{-\ln|\zeta|c_\theta} \leq e^{T c_\theta}.
\end{equation*}

So $|\zeta|^{\rho(\alpha)}$ is bounded for $\alpha \in U_{\theta,r(\theta)}$, and 
in particular has sub-exponential growth. Since $\rho$ is holomorphic, so is 
$\alpha\mapsto |\zeta|^{\rho(\alpha)}$, thus, $\alpha\mapsto |\zeta|^{\rho(\alpha)}$ 
is in $S(r)$. Moreover, the 
bound $||\zeta|^{\rho(\alpha)}|\leq e^{Tc_\theta}$ is uniform in $\zeta\in 
\mathcal 
D$, so $(|\zeta|^{\rho})_{\zeta\in \mathcal D}$ is a bounded family of 
$S(r)$.

\pushQED{\qed}
We have proved $(\gamma_{\zeta,x})$ is a bounded family of $S(r)$, so according 
to the estimate on holomorphy default operators (Theorem \ref{th:est_default}), if 
$V$ is a bounded domain that is star-shaped with respect to $0$, for any 
$\delta'>0$, there exists 
$C>0$ independent of $\zeta,x$, such that:
\[\bigg|\sum_{n> r(0)} \gamma_{\zeta,x}(n) a_n z^n\bigg|_{L^\infty(V)}\mkern-10mu 
\leq 
C\bigg|\sum_{n> r(0)} a_nz^n\bigg|_{L^\infty(V^{\delta'})}.\]

We can't apply this estimate directly with $U=V$ since $0\notin U$, but we can 
choose $V$ and $\delta'$ such that $U\subset V$ and $V^{\delta'}\subset 
U^\delta$ (for instance $\delta' = \delta/2$ and $V = U^{\delta'}$): there exists 
$C>0$ independent of $\zeta\in \mathcal D$ and $x\in(-1,1)$: \[\bigg|\sum_{n> r(0)} 
\gamma_{\zeta,x}(n) a_n z^n\bigg|_{L^\infty(U)}  \mkern-10mu\leq
C\bigg|\sum_{n> r(0)} a_nz^n\bigg|_{L^\infty(U^{\delta})}.\]
So, thanks to equation \eqref{eq:lemma}:
\begin{equation*}
\bigg|\sum_{n> r(0)} v_n(x) a_n z^{n-1}|\zeta|^{\rho_n}\bigg|_{L^\infty(\mathcal 
D)}\mkern-15mu
\leq  Ce^T\bigg|\sum_{n> r(0)} a_nz^n\bigg|_{L^\infty(U^{\delta})}\mkern-15mu
\leq  Ce^T\bigg|\sum_{n> r(0)} 
a_nz^{n-1}\bigg|_{L^\infty(U^{\delta})}\mkern-5mu.\qedhere
\end{equation*}
\popQED

\section{Conclusion and open problems}
We proved the non-null controllability of the Grushin equation on some special 
control domain, and  if we 
combine our result with the previous ones \cite{beauchard_null_2014-1, 
beauchard_2d_2015}, all of the following situations can happen depending on the 
control domain $\omega$:
\begin{itemize}
  \item the Grushin equation is controllable in any time, for instance if $\omega = (0,a)\times(0,1)$;
  \item the Grushin equation is controllable in large time, but not in small time, for instance if $\omega = (a,b)\times(0,1)$ with $0<a<b$;
  \item the Grushin equation is never controllable, for instance if $\omega = (-1,1)\times((0,1)\setminus[a,b])$ with $0\leq a < b \leq 1$.
\end{itemize}

A pattern that seems to appear in the controllability of degenerate parabolic equations is that the controllability holds in any time when the degeneracy is weak, and never holds when the degeneracy is strong. Our result may indicate that obtaining general results on the controllability of parabolic equation degenerating inside the domain will be difficult in the critical case, i.e. when the degeneracy is neither strong nor weak.

On the Grushin equation, null-controllability is still an open problem for domains 
that do not fall into one of the three domain type we described before. Also, 
controllability of higher-dimension Grushin equation, for $x \in (-1,1)$, $y \in 
\set T^n$, on $\omega = (-1,1) \times \omega_y$ is still an open problem (the case 
where $\omega = (a,b)\times \set T^n$ is mentioned in \cite{beauchard_null_2014-1}).

Another question we might ask is whether regular initial conditions can be steered 
to $0$, as it happens for the Grushin equation when the control domain is two 
symmetric vertical bands \cite{beauchard_2d_2015}. The answer 
is negative:
\begin{prop}\label{th:main_aug}
 Let $T>0$ and $\omega$ as in the main theorem (Theorem \ref{th:main}). For 
$\alpha>0$, let $\mathcal A_\alpha = \{\sum a_n(x) e^{iny}, \sum |a_n|_{L^2(-1,1)}^2 
e^{2\alpha n} <+\infty\}$. Then, for every $\alpha>0$, there exists an initial 
condition $f_\alpha$ in $\mathcal A_\alpha$ that cannot be steered in time $T$ to $0$ 
by means of $L^2$ controls localized in $\omega$.
\end{prop}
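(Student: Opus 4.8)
The plan is to combine Theorem~\ref{th:main} with a soft functional‑analytic argument. One cannot hope to pair a \emph{single} smooth datum against the counterexample sequence of Theorem~\ref{th:main}, because the relevant quantity is an evaluation‑type rather than an $L^{2}$‑type functional and cancellations are uncontrolled; instead I would argue by contradiction. If \emph{every} datum of a suitable smooth space were steerable to $0$, a uniform boundedness argument would produce a \emph{weighted} observability inequality, and the point is that this weighted inequality is still disproved by the very same Runge/Fatou mechanism as in Theorem~\ref{th:main} — the only effect of the exponential weight $e^{-2\alpha n}$ being to shrink the disk $D(0,e^{-T})$ to $D(0,e^{-(T+\alpha)})$. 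The first step, recognising that the contradiction must be set up through a weighted observability inequality rather than a direct construction, is the genuine difficulty; the rest is bookkeeping, of which the only real point is that $\mathcal A_\alpha$ is not continuously embedded in $L^{2}(\Omega)$ (negative frequencies), which forces a passage to a high‑frequency subspace.

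\textbf{Geometry and reduction.} Fix $\alpha>0$. I would first fix the geometry exactly as in the proof of Theorem~\ref{th:main}: choose $\theta$ non adherent to $\omega_{y}$, set $z_{1}=re^{i\theta}$ with $0<r<e^{-(T+\alpha)}$, so that $z_{1}\in D(0,e^{-(T+\alpha)})$, and — using that $\distance\bigl(z_{1}[1,+\infty),U\bigr)>0$ since $\theta$ lies in the interior of $[a,b]$ — fix $\delta>0$ small enough (as in the proof of Theorem~\ref{th:main}) that $\delta<r$, $z_{1}\notin\overline{U^{\delta}}$ and $\overline{U^{\delta}}$ is a compact subset of $\set C\setminus z_{1}[1,+\infty)$. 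Let $N_{0}$ be the integer $N$ that Proposition~\ref{th:obs_hol} attaches to this $\delta$, and put
\[
 \mathcal A_{\alpha}^{+}=\Bigl\{\textstyle\sum_{n>N_{0}}a_{n}(x)e^{iny}\ :\ \sum_{n>N_{0}}|a_{n}|_{L^{2}(-1,1)}^{2}e^{2\alpha n}<+\infty\Bigr\},
\]
a Hilbert space for $|f_{0}|_{\mathcal A_{\alpha}}^{2}=\sum_{n>N_0}|a_{n}|_{L^{2}}^{2}e^{2\alpha n}$. Since $e^{2\alpha n}\ge 1$ on the modes it contains, $\mathcal A_{\alpha}^{+}$ injects continuously into $L^{2}(\Omega)$, and $\mathcal A_{\alpha}^{+}\subset\mathcal A_{\alpha}$; hence it suffices to produce $f_{\alpha}\in\mathcal A_{\alpha}^{+}$ that no $L^{2}$ control steers to $0$ in time $T$.

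\textbf{Extraction of a weighted observability inequality.} Suppose, for contradiction, that every $f_{0}\in\mathcal A_{\alpha}^{+}$ is steerable to $0$ in time $T$. The duality underlying the equivalence between controllability and observability (\cite[Theorem~2.44]{coron_control_2007}, together with the self‑adjointness of the Grushin semigroup) states that $f_{0}\in L^{2}(\Omega)$ is steered to $0$ in time $T$ by a control of $L^{2}([0,T]\times\omega)$‑norm $\le M$ if and only if $|\langle f_{0},g(T)\rangle_{L^{2}(\Omega)}|\le M\,|g|_{L^{2}([0,T]\times\omega)}$ for every solution $g$ of~\eqref{eq:grushin}. The map sending $f_{0}\in\mathcal A_{\alpha}^{+}$ to its minimal‑norm control is linear with closed graph (the time‑$T$ solution map being continuous from $L^{2}(\Omega)\times L^{2}([0,T]\times\omega)$ to $L^{2}(\Omega)$), so the closed graph theorem yields $C>0$ with $|\langle f_{0},g(T)\rangle|\le C|f_{0}|_{\mathcal A_{\alpha}}|g|_{L^{2}([0,T]\times\omega)}$ for all $f_{0}\in\mathcal A_{\alpha}^{+}$ and all solutions $g$. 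Taking the supremum over $|f_{0}|_{\mathcal A_{\alpha}}\le 1$ and specialising $g=g_{w}$ to the solution with initial datum $w=\sum_{n>N_{0}}a_{n}v_{n}(x)e^{iny}$, for which $g_{w}(T)=\sum_{n>N_{0}}a_{n}e^{-\lambda_{n}T}v_{n}(x)e^{iny}$, gives (up to a harmless constant)
\[
 \sum_{n>N_{0}}|v_{n}|_{L^{2}(-1,1)}^{2}\,|a_{n}|^{2}\,e^{-2\lambda_{n}T}\,e^{-2\alpha n}\ \le\ C'\,|g_{w}|_{L^{2}([0,T]\times\omega)}^{2}.
\]

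\textbf{Reduction to a polynomial estimate and conclusion.} From here the argument is that of Proposition~\ref{th:obs_hol}, with $T$ replaced by $T+\alpha$. On the left I would use $|v_{n}|_{L^{2}}^{2}\ge c\,n^{-1/2}\ge c\,n^{-1}$ (Lemma~\ref{th:lemma2}), the boundedness of $(\rho_{n})$ (Theorem~\ref{th:asymptotic}), so that $e^{-2\lambda_{n}T}e^{-2\alpha n}\ge c'e^{-2(T+\alpha)n}$, and the identity $\int_{D(0,e^{-(T+\alpha)})}|z|^{2n-2}\diff\lambda(z)=\pi e^{-2(T+\alpha)n}/n$, to bound the left‑hand side below by a constant times $|f|_{L^{2}(D(0,e^{-(T+\alpha)}))}^{2}$, where $f(z)=\sum_{n>N_{0}}a_{n}z^{n-1}$; on the right, Lemma~\ref{th:lemma1} applied exactly as in Proposition~\ref{th:obs_hol} (this is why $N_{0}=N(\delta)$) gives $|g_{w}|_{L^{2}([0,T]\times\omega)}^{2}\le 2C_{2}^{2}\pi\,|f|_{L^{\infty}(U^{\delta})}^{2}$. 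Thus the contradiction hypothesis would force, for some $C''>0$ and all polynomials $f=\sum_{n>N_{0}}a_{n}z^{n-1}$,
\[
 |f|_{L^{2}(D(0,e^{-(T+\alpha)}))}\ \le\ C''\,|f|_{L^{\infty}(U^{\delta})}.
\]
This is refuted verbatim as in the proof of Theorem~\ref{th:main}: with $f_{k}=z^{N_{0}+1}\tilde f_{k}$ where $\tilde f_{k}\to(z-z_{1})^{-1}$ uniformly on compact subsets of $\set C\setminus z_{1}[1,+\infty)$, the right‑hand side stays bounded ($z\mapsto z^{N_{0}+1}(z-z_{1})^{-1}$ is bounded on $U^{\delta}$ and $\overline{U^{\delta}}$ is compact in $\set C\setminus z_{1}[1,+\infty)$), while $f_{k}\to z^{N_{0}+1}(z-z_{1})^{-1}$ almost everywhere on $D(0,e^{-(T+\alpha)})$, a function with a non‑$L^{2}$ singularity at $z_{1}\in D(0,e^{-(T+\alpha)})$, so that $|f_{k}|_{L^{2}(D(0,e^{-(T+\alpha)}))}\to+\infty$ by Fatou's lemma. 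The resulting contradiction furnishes the desired $f_{\alpha}\in\mathcal A_{\alpha}^{+}\subset\mathcal A_{\alpha}$.
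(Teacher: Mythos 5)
Your proof is correct, but it takes a substantially longer route than the paper's. The paper's argument is a short soft one: take $f_0\in L^2(\Omega)$ not steerable to $0$ in time $T+\alpha$ (Theorem \ref{th:main}), let $f_\alpha=f(\alpha,\cdot,\cdot)$ be the free solution at time $\alpha$; parabolic smoothing (here just $\lambda_{n,k}\ge\lambda_n>|n|$ from Proposition \ref{th:basic_eigen}) puts $f_\alpha\in\mathcal A_\alpha$, and if $f_\alpha$ could be steered to $0$ in time $T$ one would concatenate with the free flow to steer $f_0$ to $0$ in time $T+\alpha$, a contradiction. Nothing is re-derived — the whole Runge/Fatou apparatus is invoked only once, through Theorem \ref{th:main}. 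Your proof instead re-runs that apparatus from scratch: you postulate steerability of every datum in a weighted space, extract via closed graph and duality a weighted observability inequality, convert it as in Proposition \ref{th:obs_hol} into $|f|_{L^2(D(0,e^{-(T+\alpha)}))}\le C|f|_{L^\infty(U^\delta)}$ (the weight $e^{-2\alpha n}$ shrinking the disk), and disprove this by the same Runge/Fatou construction. That is a valid route, and it has the small virtue of exhibiting very concretely how analytic regularity in $y$ merely shifts the radius $e^{-T}$ to $e^{-(T+\alpha)}$ without changing the obstruction; but it is heavier than necessary — in particular the closed-graph/linearity-of-minimal-norm-control argument and the re-derivation of the polynomial estimate are avoidable. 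One micro-inaccuracy: the integer $N$ of Proposition \ref{th:obs_hol} (and Lemma \ref{th:lemma1}) is $\lfloor r(0)\rfloor$ from the spectral analysis and does not depend on $\delta$, so your parenthetical "this is why $N_0=N(\delta)$" is off, though harmless. I would also note that, contrary to your preliminary remark, a single non-steerable datum \emph{can} be extracted directly (the negation of null controllability is precisely the existence of such an $f_0$), which is exactly what the paper exploits; the subtlety you correctly identify (that $\mathcal A_\alpha$ contains non-$L^2$ objects because of negative frequencies) is handled in the paper's route simply because the smoothed datum $f_\alpha$ lies in $L^2(\Omega)\cap\mathcal A_\alpha$ automatically.
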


\begin{proof}
 According to Theorem \ref{th:main}, there exists an initial condition $f_0 = \sum 
a_n v_n(x) e^{iny}$ in $L^2(\Omega)$ that cannot be steered to $0$ by a $L^2$ 
control 
localized on $\omega$ in time $T+\alpha$. Let $f(t,x,y)$ be the solution of the 
Grushin equation \eqref{eq:grushin} with $f_0$ as the initial condition, and let 
$f_\alpha(x,y) = f(\alpha,x,y)$. Then, since $\lambda_n = n + o(1)$, $f_\alpha(x,y) = 
\sum a_n v_n(x) e^{-\alpha\lambda_n}e^{iny}$ is in $\mathcal A_\alpha$, and if it 
could be steered to $0$ in time $T$, then, $f_0$ could be steered to $0$ in time 
$T+\alpha$.
\end{proof}

From this proposition, we could ask if there is even one non-null initial condition 
that can be steered to $0$. For the moment, it is unknown, but we conjecture that 
there 
is none.

\begin{appendices}
\section{The stationary phase theorem}
We prove here the following theorem:
\begin{theorem}
  There exists $C>0$ such that for all $u\in \mathcal S(\set R)$, $N\in \set N$ and $\alpha\in \{\alpha\neq0,\Re(\alpha)\geq 0\}$:
  \[
    \int_{\set R} e^{-\alpha x^2/2} u(x) \diff x =
    \sum_{k=0}^{N-1} \frac{\sqrt{2\pi}}{2^k k! \alpha^{k+1/2}} u^{(2k)}(0) + S_N(u,\alpha)
  \]
  where $\alpha^s$ is defined to be $e^{s\ln \alpha}$, with the principal determination of the logarithm, and $S_N(u,\alpha)$ satisfying:
  \[
    |S_N(u,\alpha)| \leq \frac{C}{2^N N! |\alpha|^{N+1/2}}\sum_{k=0}^2 
\|u^{(2N+k)}\|_{L^1}.
  \]
\end{theorem}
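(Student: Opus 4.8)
The plan is to move to the Fourier side, where $e^{-\alpha x^2/2}$ becomes an exact Gaussian in the dual variable and the asymptotic expansion becomes a Taylor expansion of the exponential. Write $\hat u(\xi)=\int_{\set R}u(x)e^{-ix\xi}\diff x$, so that $u(x)=\frac1{2\pi}\int_{\set R}\hat u(\xi)e^{ix\xi}\diff\xi$ and $\hat u\in\mathcal S(\set R)$. Completing the square and shifting the (horizontal) contour of integration gives, for $\Re(\alpha)>0$,
\begin{equation*}
  \int_{\set R} e^{-\alpha x^2/2} e^{ix\xi}\diff x = \sqrt{\frac{2\pi}\alpha}\, e^{-\xi^2/(2\alpha)},
\end{equation*}
with $\sqrt\alpha$ the principal branch, which is harmless since $\alpha$ lies in the right half-plane. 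For $\Re(\alpha)>0$ the function $e^{-\alpha x^2/2}\hat u(\xi)e^{ix\xi}$ is absolutely integrable on $\set R^2$, so Fubini's theorem yields
\begin{equation}\label{eq:sp_star}
  \int_{\set R} e^{-\alpha x^2/2} u(x)\diff x = \frac1{\sqrt{2\pi\alpha}}\int_{\set R}\hat u(\xi)\,e^{-\xi^2/(2\alpha)}\diff\xi.
\end{equation}
Both sides of \eqref{eq:sp_star} extend continuously to $\{\Re(\alpha)\geq0,\ \alpha\neq0\}$: on the left by dominated convergence with dominating function $|u|\in L^1$; on the right because $\Re(-\xi^2/(2\alpha))=-\xi^2\Re(\alpha)/(2|\alpha|^2)\leq0$, so $|e^{-\xi^2/(2\alpha)}|\leq1$ and dominated convergence applies with dominating function $|\hat u|\in L^1$, while $\alpha\mapsto 1/\sqrt\alpha$ is continuous off the negative real axis. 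Hence \eqref{eq:sp_star} holds on all of $\{\Re(\alpha)\geq0,\ \alpha\neq0\}$.

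Next I would expand the Gaussian in the right-hand side of \eqref{eq:sp_star}. Set $w=w(\xi,\alpha)=-\xi^2/(2\alpha)$; for $N\geq1$ the Taylor formula with integral remainder gives
\begin{equation*}
  e^{w}=\sum_{k=0}^{N-1}\frac{w^k}{k!}+r_N(\xi,\alpha),\qquad r_N(\xi,\alpha)=\frac{w^N}{(N-1)!}\int_0^1(1-t)^{N-1}e^{tw}\diff t
\end{equation*}
(the case $N=0$ being immediate from $|e^{w}|\leq1$). The main terms are evaluated using $\widehat{u^{(m)}}(\xi)=(i\xi)^m\hat u(\xi)$, hence $\xi^{2k}\hat u(\xi)=(-1)^k\widehat{u^{(2k)}}(\xi)$, and $\int_{\set R}\widehat{u^{(2k)}}(\xi)\diff\xi=2\pi u^{(2k)}(0)$ by Fourier inversion at $x=0$; thus $\int_{\set R}\hat u(\xi)\xi^{2k}\diff\xi=(-1)^k2\pi u^{(2k)}(0)$. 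Since the $\xi^{2k}$-coefficient in the sum is $(-1)^k/(2^kk!\,\alpha^k)$, the $k$-th term of $\frac1{\sqrt{2\pi\alpha}}\int\hat u(\xi)\sum_k\frac{w^k}{k!}\diff\xi$ equals
\begin{equation*}
  \frac1{\sqrt{2\pi\alpha}}\cdot\frac{(-1)^k}{2^kk!\,\alpha^k}\cdot(-1)^k2\pi u^{(2k)}(0)=\frac{\sqrt{2\pi}}{2^kk!\,\alpha^{k+1/2}}u^{(2k)}(0),
\end{equation*}
which is exactly the stated expansion; accordingly $S_N(u,\alpha)=\frac1{\sqrt{2\pi\alpha}}\int_{\set R}\hat u(\xi)r_N(\xi,\alpha)\diff\xi$.

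To estimate $S_N$, observe that $\Re(w)\leq0$ gives $|e^{tw}|\leq1$ for $t\in[0,1]$, hence $|r_N(\xi,\alpha)|\leq|w|^N/N!=\xi^{2N}/(2^NN!\,|\alpha|^N)$. Therefore
\begin{equation*}
  |S_N(u,\alpha)|\leq\frac1{\sqrt{2\pi}\,2^NN!\,|\alpha|^{N+1/2}}\int_{\set R}\xi^{2N}|\hat u(\xi)|\diff\xi=\frac1{\sqrt{2\pi}\,2^NN!\,|\alpha|^{N+1/2}}\int_{\set R}|\widehat{u^{(2N)}}(\xi)|\diff\xi.
\end{equation*}
Inserting $1=(1+\xi^2)/(1+\xi^2)$, using $\int_{\set R}\diff\xi/(1+\xi^2)=\pi$, and using $\|\hat h\|_{L^\infty}\leq\|h\|_{L^1}$ for $h=u^{(2N)}$ and $h=u^{(2N+2)}$ together with $\xi^2\widehat{u^{(2N)}}(\xi)=-\widehat{u^{(2N+2)}}(\xi)$, we get
\begin{equation*}
  \int_{\set R}|\widehat{u^{(2N)}}(\xi)|\diff\xi\leq\pi\sup_{\xi\in\set R}(1+\xi^2)|\widehat{u^{(2N)}}(\xi)|\leq\pi\big(\|u^{(2N)}\|_{L^1}+\|u^{(2N+2)}\|_{L^1}\big)\leq\pi\sum_{k=0}^2\|u^{(2N+k)}\|_{L^1}.
\end{equation*}
Combining the last two displays gives $|S_N(u,\alpha)|\leq\frac{\sqrt{\pi/2}}{2^NN!\,|\alpha|^{N+1/2}}\sum_{k=0}^2\|u^{(2N+k)}\|_{L^1}$, so the theorem holds with $C=\sqrt{\pi/2}$. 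The only step requiring genuine care is the passage of \eqref{eq:sp_star} across the imaginary axis $\Re(\alpha)=0$, where $e^{-\alpha x^2/2}$ ceases to be integrable and the direct Fubini argument breaks down; the continuity argument above is what bridges this gap. Everything else is bookkeeping with Fourier identities, the closed form of the Gaussian integral, and the uniform sign $\Re(-\xi^2/(2\alpha))\leq0$ that makes the Taylor remainder controllable independently of $\alpha$.
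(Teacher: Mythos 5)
Your proof is correct and follows essentially the same route as the paper's: pass to the Fourier side where $e^{-\alpha x^2/2}$ has an exact Gaussian transform, Taylor-expand $e^{-\xi^2/(2\alpha)}$, and control the remainder using $\Re(-\xi^2/(2\alpha))\leq 0$. The paper only sketches this and defers details to Martinez; you have filled in the bookkeeping, including the dominated-convergence continuity argument across $\Re(\alpha)=0$ (the paper invokes a tempered-distribution limit for the same purpose) and an explicit constant $C=\sqrt{\pi/2}$.
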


\begin{proof}
  Since the proof is essentially the same as the one provided by Martinez 
\cite[][theorem 2.6.1]{martinez_introduction_2002} for the case $\alpha$ purely 
imaginary, we just give the main ideas.
  
  We define the Fourier transform of $u$ in the Schwartz space by $\mathcal 
F(u)(\xi) = \int_{\set R} u(x)e^{-ix\xi}\diff x$. Then, the Fourier transform of 
$x\mapsto e^{-\alpha x^2/2}$ is $\mathcal F(e^{-\alpha x^2/2})(\xi) = 
\sqrt{\frac{2\pi}{\alpha}}e^{-\xi^2/{2\alpha}}$ (this is standard when 
$\Re(\alpha)>0$, and by taking the limit in $\mathcal S'(\set R)$ for $\alpha+ 
\epsilon\to\alpha$ when $\alpha$ is purely imaginary). So, we have:
  \[\int_{\set R}e^{-\alpha x^2/2}u(x)\diff x = \frac{1}{\sqrt{2\pi\alpha}}\int_{\set 
R}e^{-\xi^2/(2\alpha)}\mathcal F(u)(\xi)\diff \xi.\]
  
  Then, writing
  \[e^{-\xi^2/(2\alpha)} = \sum_{k\geq 0}\frac{(-1)^k}{(2\alpha)^k k!}\xi^{2k} = 
\sum_{k< N}\frac{(-1)^k}{(2\alpha)^k k!}\xi^{2k} + R_N(\xi,\alpha),\]
  with, according to Taylor's formula,
  \[|R_N(\xi,\alpha)|\leq\frac{\xi^{2N}}{2^N |\alpha|^N N!},\]
  we have
  \[\int_{\set R} e^{-\alpha x^2/2}u(x)\diff x = \sum_{k<N} \frac{\sqrt{2\pi}}{2^k k! \alpha^{k+1/2}}u^{(2k)}(0) + S_N(u,\alpha)\]
  with $S_N(u,\alpha) = \int_{\set R} R_N(\xi,\alpha)\mathcal F u(\xi)\diff \xi$, that satisfies the estimate stated in the theorem.
  
  We refer to Martinez's proof in the already mentioned book 
\cite{martinez_introduction_2002} for more details on the computations.
\end{proof}

Here is the particular case of the stationary phase theorem we will need:
\begin{prop}\label{th:stationary_phase}
  Let $\Gamma_{+,\epsilon}$ be the path $(-\infty,\epsilon]\cup \{\epsilon e^{i(\pi-\theta)},0\leq \theta \leq \pi\}\cup [\epsilon,+\infty)$. Let $\theta$ in $(0,\pi/2)$. There exists $C_\theta>0$ such that for all $\alpha\in U_{\theta,1}$, $\rho \in D(0,1)$ and $m\in\{0,1,2\}$, with $f(s) = e^{-(1+\rho/2)\ln(s)}(\ln(s))^m$:
  \[
    \left|\int_{\Gamma_{+,\epsilon}} e^{-\alpha(1+s/2)^2} f(s) \diff s - 
2\sqrt{\frac{\pi}{\alpha}} f(-2)\right| \leq C_\theta|\alpha|^{-3/2}.
  \]
\end{prop}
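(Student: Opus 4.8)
The phase in this integral is $s\mapsto(1+s/2)^2$, which has a single critical point, at $s=-2$, and that point lies on the real part of $\Gamma_{+,\epsilon}$ (for $\epsilon<2$), where the contour is just a piece of the real axis. The plan is to localize near $s=-2$, make the substitution $s=-2+\sqrt2\,x$ — for which $1+s/2=x/\sqrt2$, hence $e^{-\alpha(1+s/2)^2}=e^{-\alpha x^2/2}$ and $\diff s=\sqrt2\,\diff x$ — and then invoke the one-dimensional stationary phase theorem proved above with $N=1$. Since that theorem gives $\int_{\set R}e^{-\alpha x^2/2}u(x)\,\diff x=\sqrt{2\pi/\alpha}\,u(0)+O(|\alpha|^{-3/2})$, with $u(0)$ essentially $\sqrt2\,f(-2)$, the leading term is $\sqrt2\cdot f(-2)\cdot\sqrt{2\pi/\alpha}=2\sqrt{\pi/\alpha}\,f(-2)$, exactly the claimed main term.

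First I would note that the value of the integral does not depend on $\epsilon$: with the branch of $\ln$ that is continuous along $\Gamma_{+,\epsilon}$ (the one with $\arg s\in[0,\pi]$ on the path) the integrand is holomorphic on the open upper half-plane and continuous up to the real axis away from $0$, the tails at $\pm\infty$ are common to all these contours, and for $\alpha\in U_{\theta,1}$ the integrand decays exponentially there; Cauchy's theorem applied between $\Gamma_{+,\epsilon}$ and $\Gamma_{+,\epsilon'}$ gives equality. So I may fix once and for all a small $\epsilon=\epsilon(\theta)\in(0,1)$, chosen so that on the semicircular arc $\{\epsilon e^{i\psi},\psi\in[0,\pi]\}$ one has $|\arg((1+s/2)^2)|<\pi/2-\theta$. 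Then split $\Gamma_{+,\epsilon}=\Gamma_{\mathrm{near}}\cup\Gamma_{\mathrm{far}}$ with $\Gamma_{\mathrm{near}}=\{s\in\set R:|s+2|\le1\}$. On $\Gamma_{\mathrm{far}}$ either $s$ is real with $|s+2|\ge1$, giving $\Re(\alpha(1+s/2)^2)=|\alpha|(1+s/2)^2\cos(\arg\alpha)\ge\tfrac14|\alpha|\cos\theta$, or $s$ is on the arc, where the choice of $\epsilon$ gives a lower bound of the same type; in both cases, on the tails one also has $\Re(\alpha(1+s/2)^2)\ge\cos\theta\,(1+s/2)^2$ using $|\alpha|\ge1$, which makes $\int_{\Gamma_{\mathrm{far}}}|f(s)|\,e^{-\Re(\alpha(1+s/2)^2)+c_1|\alpha|}\,|\diff s|$ converge to a quantity bounded uniformly in $\rho\in D(0,1)$ and $m\in\{0,1,2\}$ (there $|s^{-(1+\rho/2)}(\ln s)^m|$ is controlled since $\Re(1+\rho/2)\in(\tfrac{3}{4},\tfrac{5}{4})$ and we stay away from $0$). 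Hence $\bigl|\int_{\Gamma_{\mathrm{far}}}e^{-\alpha(1+s/2)^2}f(s)\,\diff s\bigr|\le C_\theta e^{-c_1|\alpha|}\le C'_\theta|\alpha|^{-3/2}$.

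For the near part, substitute $s=-2+\sqrt2\,x$, so the range becomes $|x|\le1/\sqrt2$; there $-2+\sqrt2\,x$ is a negative real close to $-2$, the chosen branch gives $\ln(-2+\sqrt2\,x)=\ln(2-\sqrt2\,x)+i\pi$, and so $g(x):=\sqrt2\,f(-2+\sqrt2\,x)$ is real-analytic in $x$ on a neighbourhood of $[-1/\sqrt2,1/\sqrt2]$, holomorphic in $\rho$, with $g(0)=\sqrt2\,f(-2)$, and $g$ together with its $x$-derivatives up to order $4$ bounded on that neighbourhood uniformly over $\rho\in\overline{D(0,1)}$ and $m\in\{0,1,2\}$. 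Pick $\chi\in C_c^\infty(\set R)$ with $\chi\equiv1$ near $0$ and $\operatorname{supp}\chi\subset(-1/\sqrt2,1/\sqrt2)$. Then
\[ \int_{\Gamma_{\mathrm{near}}} e^{-\alpha(1+s/2)^2} f(s)\,\diff s = \int_{\set R} e^{-\alpha x^2/2}\chi(x)g(x)\,\diff x + \sqrt2\int_{\{|x|\le 1/\sqrt2\}\cap\operatorname{supp}(1-\chi)} e^{-\alpha x^2/2}(1-\chi)g\,\diff x, \]
and the last term is $O(|\alpha|^{-3/2})$ uniformly in $\rho,m$ because there $|x|$ is bounded below and $\Re(\alpha)\ge|\alpha|\cos\theta\ge\cos\theta$. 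To the first term I apply the stationary phase theorem with $u=\chi g\in\mathcal S(\set R)$, $N=1$:
\[ \int_{\set R} e^{-\alpha x^2/2}\chi g\,\diff x = \sqrt{\tfrac{2\pi}{\alpha}}\,g(0) + S_1 = 2\sqrt{\tfrac{\pi}{\alpha}}\,f(-2) + S_1, \qquad |S_1|\le\frac{C}{|\alpha|^{3/2}}\sum_{k=0}^{2}\bigl\|(\chi g)^{(2+k)}\bigr\|_{L^1}, \]
where the $L^1$ norms are bounded uniformly in $\rho\in D(0,1)$, $m\in\{0,1,2\}$ by the previous remark. Collecting the three contributions yields the proposition with $C_\theta$ depending only on $\theta$. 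The contour deformations and exponential bounds are routine; the only points needing real care are keeping track of the branch of $\ln$ forced by the path, and the uniformity in $\rho$ and $m$, which I get from compactness once the cutoff has confined the remaining integral to a fixed compact interval on which $f(-2+\sqrt2\,x)$ and its derivatives depend continuously on $(x,\rho,m)$.
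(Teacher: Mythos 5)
Your proof is correct, and the treatment of the main (near) contribution is essentially the same as the paper's: cut off around the critical point $s=-2$, substitute $s=-2+\sqrt2\,x$ so the phase becomes $x^2/2$, and feed the resulting compactly supported amplitude into the one-dimensional stationary phase theorem with $N=1$. The genuine difference is how the contribution away from $s=-2$ is estimated. You first use Cauchy's theorem (holomorphy in the upper half-plane) to shrink $\epsilon$ until, on the small arc, $|\arg((1+s/2)^2)|<\pi/2-\theta$, and then bound the whole far integral directly by $O(e^{-c_1|\alpha|})$, exploiting that $\Re(\alpha(1+s/2)^2)\gtrsim_\theta|\alpha|$ uniformly off a neighbourhood of $-2$. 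The paper instead smooths the corner of $\Gamma_{+,\epsilon}$ into a $C^\infty$ curve $\Gamma_+(t)=t+2i\phi(t)$ and disposes of the far contribution by the non-stationary phase lemma, i.e.\ two integrations by parts with $L=\frac1{\varphi'}\partial_t$, yielding a factor $\alpha^{-2}$. Your route avoids the integration by parts entirely (the exponential decay already beats any power of $|\alpha|$), at the cost of the $\epsilon$-independence argument and the care about the argument of $(1+s/2)^2$ on the arc; the paper's route is the more robust, standard move, and would still work if the far integral were only polynomially suppressed. One small slip that does not affect the argument: for $\rho\in D(0,1)$ one has $\Re(1+\rho/2)\in(\tfrac12,\tfrac32)$, not $(\tfrac34,\tfrac54)$; all you actually need is $\Re(1+\rho/2)>0$ and bounded above, which holds.
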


\begin{proof}
  We start by choosing $\chi\in C_c^\infty(-3,-1)$ with $\chi = 1$ on $(-5/2,-3/2)$, and we modify slightly the path $\Gamma_{+,\epsilon}$ so that it is a $C^\infty$ path (the result of the integral of course not depending on this modification of $\Gamma_{+,\epsilon}$), for instance, if $\phi$ is in $C_c^\infty(-\epsilon,\epsilon)$ with $\phi\geq 0$ and $\phi(0)>0$, we can choose $\Gamma_{+}(t) = t + 2i\phi(t)$. Then we write:
  
 \begin{equation*}
 \begin{aligned}
\int_{\Gamma_{+,\epsilon}} e^{-\alpha(1+s/2)^2} f(s)\diff s &= \int_{-3}^{-1} e^{-\alpha(1+t/2)^2}\chi(t)f(t)\diff s \\
&+\int_{\set R\setminus(-5/2,-3/2)} 
e^{-\alpha(1+\Gamma_+(t)/2)^2}(1-\chi(t))f(\Gamma_+(t))\diff t.
 \end{aligned}
 \end{equation*}
 
 We can apply the previous theorem to the first term, so we only need to show that for some $C'_\theta>0$, the second term is bounded by $C'_\theta|\alpha|^{-3/2}$. We note $\varphi(t) = (1+\Gamma_+(t)/2)^2 = (1+t/2)^2 - \phi^2(t) +2i\phi(t)(1+t/2)$ whose only critical point is $-2$, and let $L$ the operator defined by $L u = \frac1{\varphi'}u'$ so that $L e^{-\alpha\varphi} = -\alpha e^{-\alpha\varphi}$. So, noting $L^t u = (\frac1{\varphi'}u)'$, we have by integration by parts:
 \begin{equation*}
 \begin{split}
\int_{\set R\setminus(-5/2,-3/2)} e^{-\alpha(1+\Gamma_+(t)/2)^2}(1-\chi(t))f(\Gamma_+(t))\diff t =\\
 \frac1{\alpha^2}\int_{\set R \setminus (-5/2,-3/2)} e^{-\alpha \varphi(t)} (L^t)^2 ((1-\chi)f\circ \Gamma_+)(t) \diff t
 \end{split}
 \end{equation*}
 
 Then, writing $|e^{-\alpha\varphi(t)}| \leq e^{-\Re(\alpha\varphi(t))}$ and $\Re(\alpha\varphi(t)) = \Re(\alpha) ((1+t/2)^2 - \phi^2(t)) - 2{(1+t/2)}\Im(\alpha)\varphi(t))$. If $|\arg(\alpha)|<\theta$, then for some $c_\theta>0$, $\Re(\alpha) \geq c_\theta |\alpha|$, and if we choose $\phi$ small enough, we have for some $c'_\theta>0$ and all $t \notin (-5/2,-3/2)$, $\Re(\alpha\varphi(t))\geq c'_\theta |\alpha| (1+t/2)^2$. So, for all $\alpha$ in $U_{\theta,1}$:
\begin{equation*}
 \begin{split}
\left|\int_{\set R\setminus(-5/2,-3/2)} e^{-\alpha(1+\Gamma_+(t)/2)^2}(1-\chi(t))f(\Gamma_+(t))\diff t\right| \leq\\
\frac1{|\alpha|^ 2} |(L^t)^ 2((1-\chi) f\circ \Gamma_+)|_{L^\infty(\set R\setminus(-5/2,-3/2))} \int_{\set R} e^{-c'_\theta|\alpha|t^2/4}\diff t
 \end{split}
 \end{equation*}
 which concludes the proof.
\end{proof}

\section{Newton's method}
We prove here that Newton's method can solve equations of the form $\Phi(z) = 0$ by an iterative scheme, assuming the starting point is is close enough to a solution. While such a theorem can be stated in Banach spaces, we will only need it in the complex plane.
\begin{theorem}\label{th:newton}
  Let $D=D(0,R)$ be a disk in the complex plane. We will note $5D = D(0,5R)$ and $6D = D(0,6R)$. Let $\Phi: 6D\to \set C$ be an holomorphic function such that:
  \begin{itemize}
    \item for all $z \in D$, $\Phi(z) \in D$;
    \item For all $z$ in $6D$, $\Phi'(z) \neq 0$.
  \end{itemize}
  
  Then, noting $C_1 = \sup_{5D}|\Phi''|$ and $C_2 = \sup_{5D}|\Phi'^{-1}|$ and $A = C_1C_2^2$, if $z_0$ is in $D$ and $|\Phi(z_0)|\leq \min((2A)^{-1},2RC_2^{-1})$, then the sequence $(z_n)$ defined by $z_{n+1} = z_n - \Phi'(z_n)^{-1}\Phi(z_n)$ converges and the limit $z_\infty$ satisfies $\Phi(z_\infty) = 0$. Moreover, for all $k\geq 0$, $|z_\infty - z_k| \leq \frac2{C_1C_2} |A\Phi(z_0)|^{2^k}$.
\end{theorem}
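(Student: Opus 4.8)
The plan is to run the standard Newton--Kantorovich bootstrap: one Newton step squares the residual $|\Phi(z_n)|$ up to the constant $A/2$, which produces doubly-exponential decay of the residuals to $0$, while a simultaneous induction keeps every iterate inside the disk $5D$, where the bounds $C_1=\sup_{5D}|\Phi''|$ and $C_2=\sup_{5D}|\Phi'^{-1}|$ are available. For the basic step, suppose $z_n$ and $z_{n+1}=z_n-\Phi'(z_n)^{-1}\Phi(z_n)$ lie in $5D$ and the segment $[z_n,z_{n+1}]$ lies in $5D$. Taylor's formula with integral remainder for the holomorphic function $\Phi$ gives
\[\Phi(z_{n+1})=\Phi(z_n)+\Phi'(z_n)(z_{n+1}-z_n)+\int_0^1(1-t)\,\Phi''\bigl(z_n+t(z_{n+1}-z_n)\bigr)\,\diff t\,(z_{n+1}-z_n)^2,\]
and the first two terms cancel by the definition of the Newton step, so $|\Phi(z_{n+1})|\le\tfrac{C_1}{2}|z_{n+1}-z_n|^2$. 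Combined with $|z_{n+1}-z_n|\le C_2|\Phi(z_n)|$ this yields $|\Phi(z_{n+1})|\le\tfrac{A}{2}|\Phi(z_n)|^2$ with $A=C_1C_2^2$; writing $u_n=A|\Phi(z_n)|$, this reads $u_{n+1}\le\tfrac12 u_n^2$, hence $u_n\le 2^{1-2^n}u_0^{2^n}=2q^{2^n}$, where $q:=u_0/2\le\tfrac14$ by the hypothesis $|\Phi(z_0)|\le(2A)^{-1}$.

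Next I would prove, by induction on $n$, that $z_0,\dots,z_n\in D(0,4R)$, that the segments $[z_j,z_{j+1}]$ for $j<n$ lie in $5D$, and that $u_j\le 2q^{2^j}$ for $j\le n$; since $D(0,4R)\subset 6D$, $\Phi'$ does not vanish at any $z_j$ and the iteration is well-defined at each stage, while the residual recursion above propagates the last assertion once the segment hypothesis is in place. For the confinement, the first step satisfies $|z_1-z_0|\le C_2|\Phi(z_0)|\le 2R$ --- this is exactly where the bound $|\Phi(z_0)|\le 2RC_2^{-1}$ is used --- so $z_1\in D(0,3R)$; for $j\ge1$ one has $|z_{j+1}-z_j|\le C_2|\Phi(z_j)|=\tfrac{2}{C_1C_2}q^{2^j}$, and since $q\le C_1C_2R$ (again from $|\Phi(z_0)|\le 2RC_2^{-1}$) and $q\le\tfrac14$, one gets $\sum_{j\ge1}|z_{j+1}-z_j|\le 2R\sum_{j\ge1}q^{2^j-1}\le 2R/3$. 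Thus every $z_n$ stays in $D(0,4R)$ and every segment $[z_n,z_{n+1}]$ then lies in $5D$, closing the induction.

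It follows that $|z_{n+1}-z_n|\le\tfrac{2}{C_1C_2}q^{2^n}$ for all $n\ge0$, so $(z_n)$ is Cauchy and converges to some $z_\infty\in\overline{5D}$; by continuity of $\Phi$ and $u_n\to 0$ we get $\Phi(z_\infty)=0$. For the rate, set $w=q^{2^k}\le\tfrac14$ and bound the tail:
\[|z_\infty-z_k|\le\sum_{n\ge k}\frac{2}{C_1C_2}q^{2^n}=\frac{2}{C_1C_2}\sum_{j\ge0}w^{2^j}\le\frac{2}{C_1C_2}\cdot\frac{w}{1-w}\le\frac{8w}{3C_1C_2}.\]
Since $|A\Phi(z_0)|^{2^k}=(2q)^{2^k}=2^{2^k}w\ge 2w$ for every $k\ge0$, the right-hand side is at most $\tfrac{2}{C_1C_2}|A\Phi(z_0)|^{2^k}$, which is the asserted estimate.

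The quadratic convergence itself is routine; the main obstacle is the simultaneous induction confining the iterates to $5D$, where one must feed the residual decay back into a telescoping bound on $|z_n-z_0|$ and balance it against the radius $R$. This is precisely the role of the two-sided smallness hypothesis $|\Phi(z_0)|\le\min\bigl((2A)^{-1},2RC_2^{-1}\bigr)$: the first bound forces $q\le\tfrac14$ so the residuals collapse doubly-exponentially, and the second keeps the single largest step, from $z_0$ to $z_1$, safely inside the disk.
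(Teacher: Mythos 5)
Your proof is correct and follows essentially the same Newton--Kantorovich bootstrap as the paper: a Taylor estimate gives the quadratic residual recursion $|\Phi(z_{n+1})|\le\tfrac12 C_1C_2^2|\Phi(z_n)|^2$, and a simultaneous induction keeps the iterates inside $5D$ (you use $D(0,4R)$, the paper uses $5D$ directly) so that the constants $C_1,C_2$ remain available; the Cauchy estimate and the tail bound then close the argument. The only cosmetic differences are bookkeeping ones: you separate the first step from the later ones and dominate $\sum_j q^{2^j-1}$ by a geometric series, whereas the paper directly bounds $|z_{n+1}|\le R+C_2\epsilon_0/(1-A\epsilon_0)\le 5R$ in one shot, and you carry the extra factor $2^{1-2^n}$ in the residual bound while the paper absorbs it.
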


\begin{proof}
  Let $\epsilon_0 = |\Phi(z_0)|$. We prove by induction the predicate $P(n): z_n \in 
5D$ and $|\Phi(z_n)|\leq A^{-1} (A \epsilon_0)^{2^n}$. About the case $n=0$, we made 
the hypothesis $z_0\in D$, while the inequality just reads $|\Phi(z_0)|\leq 
|\Phi(z_0)|$.
  
  Now suppose that $P(k)$ holds for all $k\leq n$. Let $v_k = -\Phi'(z_k)^{-1}\Phi(z_k)$, so that $z_{n+1} = z_0 + v_0 + \dotsb v_n$. By definition of $C_1$ and $C_2$ and the fact that for all $k\leq n$, $z_k \in 5D$, $|z_{n+1}|\leq |z_0| + \sum_{k=0}^n C_2 A^{-1}(A\epsilon_0)^{2^k} \leq R + C_2 \epsilon_0\frac1{1-A\epsilon_0}$. Since we have by hypothesis $\epsilon_0\leq \min((2A)^{-1},2RC_2^{-1})$, we have $z_{n+1} \leq R+2C_2\epsilon_0\leq 5R$, which proves that $z_{n+1}\in 5D$.
  
  In order to prove that $|\Phi(z_{n+1})|\leq A^{-1}(A\epsilon_0)^{2^{n+1}}$, we make a Taylor expansion of $\Phi$ about $z_n$: for all $\delta \in \set C$ such that $z_n+\delta\in 5D$:
  \[|\Phi(z_n+\delta) - \Phi(z_n) - \delta\Phi'(z_n)|\leq \frac12 C_1 |\delta|^2.\]
  We then choose $\delta$ so that $\Phi(z_n) + \delta\Phi'(z_n) = 0$. With this 
$\delta$, the previous inequality is $|\Phi(z_{n+1})|\leq \frac12 C_1 
|\Phi(z_n)|^2|\Phi'(z_n)^{-1}|^2$. So, by the definition of $C_2$ and the 
induction hypothesis, $|\Phi(z_{n+1})|\leq \frac12C_1C_2^2 
\big(A^{-1}(A\epsilon_2)^{2^n}\big)^2 = \frac12 A^{-1} (A\epsilon_0)^{2^{n+1}}$, 
which ends the proof of the induction.
  
  By the same kind of computations we made in order to prove $z_{n+1}\in 5D$, we have for $n\geq k$: $|z_n-z_k|\leq C_2 \sum_{j=k}^{n-1}A^{-1}(A\epsilon_0)^{2^j}\leq C_2A^{-1} (A\epsilon_0)^{2^k}\frac1{1-A\epsilon_0}\leq 2C_2 A^{-1}(A\epsilon_0)^{2^j}$. This proves the stated estimate and that $(z_n)$ converges. Since we have $|\Phi(z_n)|\leq A^{-1}(A\epsilon_0)^{2^n}$, the limit $z_\infty$ satisfies $\Phi(z_\infty) = 0$.
\end{proof}

\section{The Grushin equation on the rectangle}\label{sec:grushin_rectangle}
We look here at the Grushin equation $\partial_t f - \partial_x^2 f - 
x^2\partial_y^2f = \mathds 1_\omega$ with $(x,y) \in \Omega = (-1,1)\times (0,1)$ and 
with Dirichlet boundary conditions on $\partial\Omega$. The situation is the 
same as the Grushin equation on the torus $(-1,1)\times \set T$:
\begin{theorem}
  Let $[a,b]$ be a non trivial segment of $(0,1)$, $\omega_y = (0,1)\setminus 
[a,b]$,  $\omega = (-1,1)\times \omega_y$ and $T>0$. The Grushin equation on 
$\Omega$ is not controllable on $\omega$ in time $T$.
\end{theorem}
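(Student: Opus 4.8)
The plan is to rerun the argument of Section \ref{sec:main_proof} with only cosmetic changes, the single structural difference being that the Fourier basis $(e^{iny})_{n\in\set Z}$ of $L^2(\set T)$ is replaced by the Dirichlet basis $(\sin(n\pi y))_{n\geq 1}$ of $L^2(0,1)$. Separating variables, $\Phi_n(x,y)=v_{n\pi}(x)\sin(n\pi y)$ is an eigenfunction of the Grushin operator on $\Omega=(-1,1)\times(0,1)$ with Dirichlet conditions and eigenvalue $\lambda_{n\pi}$, where $v_\alpha$ and $\lambda_\alpha$ are exactly the objects analysed in Section \ref{sec:spectral}, now specialised at the real parameter $\alpha=n\pi$. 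As in Theorem \ref{th:main_obs} it is enough to disprove the observability inequality \eqref{eq:observability} on the finitely supported solutions $f=\sum_{n\geq1}a_n e^{-\lambda_{n\pi}t}v_{n\pi}(x)\sin(n\pi y)$ with $(a_n)$ real. Since the $\Phi_n$ are pairwise orthogonal in $L^2(\Omega)$, the left-hand side of \eqref{eq:observability} equals $\tfrac12\sum_n|v_{n\pi}|_{L^2(-1,1)}^2|a_n|^2e^{-2\lambda_{n\pi}T}$, which is the left-hand side of \eqref{eq:obs} up to the harmless factor $\tfrac12$.

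For the right-hand side I would use the change of variables $w=e^{\pi(-t+iy)}$, for which $\diff t\diff y=\pi^{-2}|w|^{-2}\diff\lambda(w)$ and which maps $[0,T]\times\omega_y$ onto the ``half-pacman'' $\mathcal D'=\{e^{-\pi T}<|w|<1,\ \arg(w)\in\pi\omega_y\}$ with $\pi\omega_y\subset(0,\pi)$. Writing $\lambda_{n\pi}=n\pi+\rho_{n\pi}$ (with $\rho_{n\pi}$ real, and bounded for $n$ large by Theorem \ref{th:asymptotic}), using the identity $\sin(n\pi y)e^{-\lambda_{n\pi}t}=\Im(w^n)\,|w|^{\rho_{n\pi}/\pi}$ and the fact that the $v_{n\pi}$ are real-valued, one gets $f(t,x,y)=\Im\bigl(\sum_n a_n v_{n\pi}(x)\,|w|^{\rho_{n\pi}/\pi}w^n\bigr)$, hence $|f(t,x,y)|\leq\bigl|\sum_n a_n v_{n\pi}(x)\,|w|^{\rho_{n\pi}/\pi}w^n\bigr|$, so the right-hand side of \eqref{eq:observability} is bounded by a constant times $\int_{-1}^1\!\int_{\mathcal D'}\bigl|\sum_n a_n v_{n\pi}(x)\,|w|^{\rho_{n\pi}/\pi}w^{n-1}\bigr|^2\diff\lambda(w)\diff x$. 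Now the analogue of Lemma \ref{th:lemma1} applies: with $U=\{0<|z|<1,\ \arg(z)\in\pi\omega_y\}$ one introduces the symbols $\gamma_{\zeta,x}(\alpha)=v(x)(\pi\alpha)\,|\zeta|^{\rho(\pi\alpha)/\pi}$, where $v(x)$ comes from Proposition \ref{th:est_agmon} with $\epsilon=1$ and $\rho(\cdot)=\gamma(\cdot)e^{-\cdot}$ from Theorem \ref{th:asymptotic}; one checks, as in Section \ref{sec:proof_lemma}, that $\alpha\mapsto\gamma(\pi\alpha)$ is again a symbol (of $S(\tilde r)$ with $\tilde r=\pi^{-1}r$), that $(\gamma_{\zeta,x})_{\zeta\in\mathcal D',\,x\in(-1,1)}$ is a bounded family of $S(\tilde r)$, and that $H_{\gamma_{\zeta,x}}\bigl(\sum a_n z^n\bigr)=\sum v_{n\pi}(x)\,|\zeta|^{\rho_{n\pi}/\pi}a_n z^n$. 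Theorem \ref{th:est_default} then gives, uniformly in $\zeta\in\mathcal D'$ and $x\in(-1,1)$, the bound $\bigl|\sum v_{n\pi}(x)a_n z^{n-1}|\zeta|^{\rho_{n\pi}/\pi}\bigr|_{L^\infty(\mathcal D')}\leq C\bigl|\sum a_n z^{n-1}\bigr|_{L^\infty(U^\delta)}$.

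Combining these two estimates, and using $\mathrm{area}(\mathcal D')\leq\pi$, $\int_{-1}^1\diff x=2$, $|v_{n\pi}|_{L^2(-1,1)}^2\geq c(n\pi)^{-1/2}$ (the proof of Lemma \ref{th:lemma2} works verbatim at $\alpha=n\pi$) and $e^{-2n\pi T}\leq e^{2T\sup_k\rho_{k\pi}}e^{-2\lambda_{n\pi}T}$, one obtains exactly as in Proposition \ref{th:obs_hol} the ``holomorphic observability inequality'': there are $N\in\set N$, $C'>0$, $\delta>0$ such that $|f|_{L^2(D(0,e^{-\pi T}))}\leq C'|f|_{L^\infty(U^\delta)}$ for every polynomial $f=\sum_{n>N}a_n z^n$ with its first $N$ derivatives vanishing at $0$. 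I would then disprove this exactly as in the proof of Theorem \ref{th:main}: choose $\delta$ small enough, $\theta^\star=\pi(a+b)/2\in(\pi a,\pi b)$, $z_0=\tfrac12 e^{-\pi T}e^{i\theta^\star}\in D(0,e^{-\pi T})\setminus\overline{U^\delta}$, and $f_k(z)=z^{N+1}\tilde f_k(z)$ with $\tilde f_k$ polynomials converging (by Runge's theorem) to $z\mapsto(z-z_0)^{-1}$ uniformly on compact subsets of $\set C\setminus z_0[1,+\infty)$; then $|f_k|_{L^2(D(0,e^{-\pi T}))}\to+\infty$ by Fatou's lemma, while $|f_k|_{L^\infty(U^\delta)}$ stays bounded because $z_0[1,+\infty)\cap U^\delta=\varnothing$ (for $\delta$ small, since $\theta^\star$ is interior to the missing sector) and $z\mapsto z^{N+1}(z-z_0)^{-1}$ is bounded on $U^\delta$.

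There is no genuinely new obstacle here: all the hard work is already done in Sections \ref{sec:est_def_op} and \ref{sec:spectral}, which apply without change to the real parameter $\alpha=n\pi$ and to their stated complex extensions. The only points requiring a little care — none of them hard — are keeping track of the factors of $\pi$ in the change of variables and in the rescaled symbol $\gamma(\pi\,\cdot\,)$, handling the sine by passing to the imaginary part (which costs nothing, since we only bound the right-hand side of the observability inequality from above), and checking that the ``half-pacman'' geometry leaves enough room for the Runge argument, which it does since $\pi\omega_y$ avoids the whole sector $[\pi a,\pi b]\subset(0,\pi)$.
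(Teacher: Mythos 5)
Your plan follows the paper's appendix sketch closely — separate variables with the Dirichlet basis $\sin(n\pi y)$, reduce to a holomorphic observability inequality via the rescaled parameter $\alpha=n\pi$ and the change of variables $w=e^{\pi(-t+iy)}$, and disprove it with Runge. The spectral and symbol machinery do indeed transfer verbatim to $\alpha=n\pi$ as you argue, and the bookkeeping of $\pi$'s is correct. But there is a genuine gap in the way you handle the sine.

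You restrict to real $(a_n)$ so that you can write $f=\Im\bigl(\sum a_n v_{n\pi}(x)|w|^{\rho_{n\pi}/\pi}w^n\bigr)$ and use $|\Im(\cdot)|\leq|\cdot|$. This is legitimate, but as a consequence the holomorphic observability inequality you obtain, $|f|_{L^2(D(0,e^{-\pi T}))}\leq C'|f|_{L^\infty(U^\delta)}$, is established only for polynomials with \emph{real} Taylor coefficients, even though you then assert it for ``every polynomial''. Your Runge counterexample $f_k\to z^{N+1}(z-z_0)^{-1}$ with $z_0=\tfrac12 e^{-\pi T}e^{i\theta^\star}$, $\theta^\star\in(\pi a,\pi b)$, has \emph{complex} coefficients (since $z_0\notin\set R$), so it does not contradict the inequality you actually derived. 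Moreover, because your single-pacman domain $U^\delta$ is not conjugate-symmetric, the real-coefficient inequality does not automatically upgrade to the complex one by splitting $f=g+ih$: you would need $\|g\|_{L^\infty(U^\delta)}\lesssim\|f\|_{L^\infty(U^\delta)}$, and $g(z)=\tfrac12\bigl(f(z)+\overline{f(\bar z)}\bigr)$ involves values of $f$ on $\overline{U^\delta}$, which your estimate does not control.

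Two easy repairs. (a) Drop the reality assumption and, as in the paper's own sketch, write $\sin(n\pi y)=\frac1{2i}(e^{in\pi y}-e^{-in\pi y})$ and use $|\tfrac1{2i}(a-b)|^2\leq\tfrac12(|a|^2+|b|^2)$; this works for arbitrary complex $(a_n)$ and replaces your single pacman $\{\arg(z)\in\pi\omega_y\}$ by the conjugate-symmetric double pacman $\{|\arg(z)|\in\pi\omega_y\}$ of the paper (the second integral is handled by the conjugate change of variables $w=e^{\pi(-t-iy)}$). The Runge argument then goes through unchanged. (b) Keep real $(a_n)$ but take a real-coefficient limiting function, for instance $z\mapsto z^{N+1}\bigl((z-z_0)^{-1}+(z-\bar z_0)^{-1}\bigr)$, approximate it by Runge on $\set C\setminus\bigl(z_0[1,\infty)\cup\bar z_0[1,\infty)\bigr)$, and symmetrize the approximants $f_k\mapsto\tfrac12\bigl(f_k(z)+\overline{f_k(\bar z)}\bigr)$ to make their coefficients real; both half-lines avoid $U^\delta$ and its conjugate for small $\delta$, so the $L^\infty$ bound survives, and Fatou still gives blow-up of the $L^2$ norm on the disk. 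Either fix closes the gap; everything else in your proposal is sound.
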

\begin{proof}[Sketch of the proof]
This time, we look for a counterexample of the observability inequality among 
linear combinations of the eigenfunctions $\Phi_n$ defined by $\Phi_n(x,y) = 
v_{n\pi}(x)\sin(n\pi y)$. Then, writing $\sin(n\pi y) = \frac1{2i}(e^{in\pi y} - 
e^{-in \pi y})$, we have:
\begin{multline*}
\int_{\substack{0<t<T\\-1<x<1\\y\in\omega_y}}\left|\sum a_n 
e^{-\lambda_{n\pi}t}\Phi_n(x,y)\right|^2\diff t\diff x\diff y\\
\leq \frac12\!
\int_{\substack{0<t<T\\-1<x<1\\y\in\omega_y}}\left(
\left|\sum a_n e^{-\lambda_{n\pi}t}v_{n\pi}(x)e^{in\pi y}\right|^2\!\!+
\left|\sum a_n e^{-\lambda_{n\pi}t}v_{n\pi}(x)e^{-in\pi y}\right|^2
\right)\diff[-6] t\diff x\diff y.
\end{multline*}

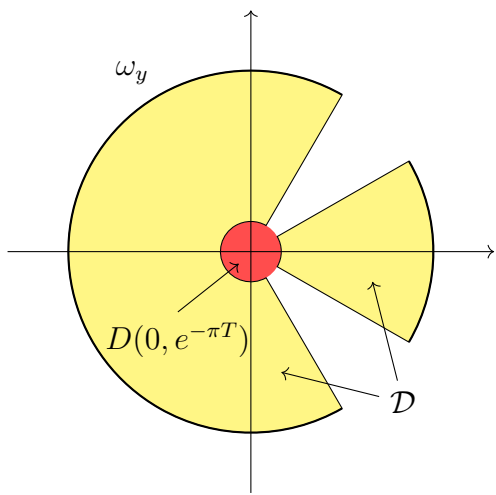
\begin{figure}[ht]
 \begin{minipage}[c]{0.5\textwidth}
  \begin{center}
   \begin{tikzpicture}[scale=0.8]
  \fill[red!70] (0,0) circle[radius=0.5];
  \draw[fill = yellow!60] (60:3)
  arc[radius=3, start angle = 60, delta angle = 240]
  -- (-60:0.5)
  arc[radius=0.5, start angle = -60, delta angle = -240]
  --cycle;
  \draw[fill = yellow!60] (-30:3)
  arc[radius=3, start angle = -30, delta angle = 60]
  -- (30:0.5)
  arc[radius=0.5, start angle = 30, delta angle = -60]
  --cycle;
  \draw[thick] (60:3) arc[radius = 3, start angle=60, delta angle=240]
   (30:3) arc[radius=3, start angle = 30, delta angle=-60];
  \node[above left] at (120:3) {$\omega_y$};
  \draw[->] (-4,0) -- (4,0);
  \draw[->] (0,-4) -- (0,4);
  
  \path (2.5,-2.5) node(a){$\mathcal D$};
  \draw[->] (a) -- (0.5,-2);
  \draw[->] (a) -- (2,-0.5);
  \draw[->] (-1.2,-1) node[below]{$D(0,e^{-\pi T})$} -- (-0.2,-0.2);
\end{tikzpicture}
  \end{center}
 \end{minipage}\hfill%
 \begin{minipage}[c]{0.5\textwidth}
  \caption{The domain $\mathcal D$ for the Grushin equation in the rectangle. The 
equivalent of the $U$ of Section \ref{sec:main_proof} is $U=\{0<|z|<1, |\arg(z)|\in 
\omega_y\}$. We still can't control entire functions in $D(0,e^{-\pi T})$ from their 
$L^2$ norm in $U^\delta$ if $\delta$ is smaller than 
$e^{-\pi T}$.}\label{fig:mathcal_D_2}
 \end{minipage}
\end{figure}
Therefore, we can do the same proof as in Section \ref{sec:main_proof}, but with 
$\mathcal D = \{e^{-\pi T}<|z|<1, |\arg(z)|\in \omega_y\}$ (see figure 
\ref{fig:mathcal_D_2}) and $U = \{0<|z|<1, |\arg(z)|\in \omega_y\}$.
\end{proof}

\paragraph{Acknowledgments:} The author thanks his Ph.D. advisor, Gilles Lebeau, for 
the relation between the Grushin equation and the toy model, as well as many other 
ideas, and many fruitful discussions. He also thanks the referees, that provided a 
lot of comments that greatly improved the article.
\printbibliography
\end{appendices}

\end{document}